\documentclass[11pt]{amsart}
\usepackage[T1]{fontenc}
\usepackage{lmodern}
\usepackage[a4paper,margin=29mm]{geometry}
\usepackage{amsmath,amssymb,amsthm,mathtools}
\usepackage{booktabs}
\usepackage{microtype}
\usepackage[numbers,sort&compress]{natbib}
\usepackage[hidelinks]{hyperref}
\hypersetup{pdftitle={Bernoulli numbers and the integer group determinants of Cp x Cp},pdfauthor={Chatchawan Panraksa}}
\newtheorem{theorem}{Theorem}[section]
\newtheorem{lemma}[theorem]{Lemma}
\newtheorem{proposition}[theorem]{Proposition}
\newtheorem{corollary}[theorem]{Corollary}
\theoremstyle{definition}
\newtheorem{definition}[theorem]{Definition}

\numberwithin{equation}{section}
\newcommand{\F}{\mathbf F_p}
\newcommand{\Z}{\mathbf Z}
\newcommand{\Q}{\mathbf Q}
\newcommand{\p}{\mathfrak p}
\newcommand{\Aop}{\mathcal A}
\newcommand{\Ltwo}{\mathcal L_2}
\newcommand{\Lthree}{\mathcal L_3}
\newcommand{\Irr}{\mathcal I_p}
\newcommand{\IrrE}{\mathcal I_p^{E}}
\newcommand{\Ehat}{\widehat E}
\DeclareMathOperator{\ord}{ord}
\DeclareMathOperator{\spanop}{span}
\DeclareMathOperator{\Cl}{Cl}
\DeclareMathOperator{\Gal}{Gal}
\DeclareMathOperator{\Nm}{N}
\title[Bernoulli numbers and integer group determinants]{Bernoulli numbers and the integer group determinants of $C_p\times C_p$}
\author{Chatchawan Panraksa}
\address{Applied Mathematics Program, Mahidol University International College, Salaya, Nakhon Pathom 73170, Thailand}
\email{chatchawan.pan@mahidol.ac.th}
\thanks{Corresponding author.}
\subjclass[2020]{Primary 11C20; Secondary 15B36, 11R18, 11B68, 11R27, 11R37, 11R45}
\keywords{Group determinant, integer group matrix, elementary abelian group, Bernoulli number, irregular prime, circular unit, Vandiver's conjecture}
\date{}
\begin{document}

\begin{abstract}
For a prime $p\ge7$, let $S(C_p\times C_p)$ be the set of integer group determinants and $e(p)$ the least exponent for which $p^{e(p)}\mathbf Z\subseteq S(C_p\times C_p)$. Let $r_p$ be the least integer $r\ge1$ satisfying $(2r+1)i>r(p-1)$ for every irregular pair $(p,2i)$ with $r\mid i$. We prove the unconditional inclusion $p^{p+3+2r_p}\mathbf Z\subseteq S(C_p\times C_p)$. Equality $e(p)=p+3+2r_p$ holds when Vandiver's conjecture holds for $p$ and $r_p\le(p-5)/2$, hence for every prime $7\le p<2^{31}$. The first exception to $e(p)=p+5$ is $p=103$, where $e(103)=116$. A polynomial obstruction from Bernoulli numbers supplies the lower bound, and circular units give the construction. For regular primes, we characterize the entire determinant set through explicit affine systems over $\mathbf F_p$ and principality conditions in $\mathbf Z[\zeta_p]$. For regular $p\ge11$, prime cofactors cannot occur at valuation $p+3$; at valuation $p+4$ they have natural density $(p+1)/(hp^{(p-3)/2})$, where $h$ is the class number of $\mathbf Q(\zeta_p)$. We also show that $p^{p+5}$ is the least absolute value of a nonzero $p$-divisible determinant for these regular primes.
\end{abstract}

\maketitle

\section{Introduction}\label{sec:intro}

In 1977 Olga Taussky-Todd~\cite{TausskyTodd1977} asked which integers
occur as group determinants when all the variables take integer values. For a finite
additive group $G$ and $v=(v_g)_{g\in G}\in\Z^G$ we write
\[
 \Theta(v)=\det(v_{g-h})_{g,h\in G},\qquad
 S(G)=\{\Theta(v):v\in\Z^G\}.
\]
For cyclic groups this is the problem of integer circulant determinants,
studied by Laquer and Newman~\cite{Laquer1980,Newman1980}; Mahoney and
Newman investigated general abelian group matrices~\cite{MahoneyNewman1980}. When $G$ is abelian, $\Theta(v)$ factors into
character values, so the question becomes arithmetic: which products of
cyclotomic norms can be realized simultaneously by a single integer
vector? Complete descriptions of $S(G)$ are now known for every group of
order at most $19$ and for several infinite families; see the
introduction of \citet{MossinghoffPinner2026} for a survey.

This paper concerns the elementary abelian group $G=C_p\times C_p$,
where $p$ is an odd prime. The values prime to $p$ are known: they are
the integers $n$ with $n^{p-1}\equiv1\pmod{p^2}$. This criterion goes
back to Odoni~\cite{Odoni1983} and is reproduced by
\citet[Chapter~2, Theorem~12]{Trafford1992}; direct proofs are given by
\citet[Lemmas~2.1--2.2]{DeSilvaPinner2014}. The multiples of $p$ are
harder. Every nonzero $p$-divisible value has $p$-adic valuation at
least $p+3$, and every valuation at least $p+3$ occurs; this goes back
to \citet[Chapter~2, Lemma~13]{Trafford1992}; see
also~\cite[Theorem~2.1]{MossinghoffPinnerHeisenberg2024}
and~\cite[Theorem~2.1(i)]{MossinghoffPinner2026}. Mossinghoff and
Pinner proved that $p^{2p+2}\Z\subseteq S(G)$ for every
$p$~\cite[Theorem~2.3]{MossinghoffPinner2026} and that
$p^{p+5}\Z\subseteq S(G)$ for $p=7$, $11$ and
$13$~\cite[Theorem~4.2]{MossinghoffPinner2026}; the case $p=7$ also
follows from~\cite[Theorem~1.1]{Panraksa2026Order49}. They remarked
that some searches over polynomials of similar forms did not uncover
such examples for larger primes~\cite[Section~8]{MossinghoffPinner2026}. Complete descriptions of
$S(C_p\times C_p)$ are known only for
$p=3$~\cite[Theorem~6.1]{PinnerSmyth2020},
$p=5$~\cite{Panraksa2026Order25} and
$p=7$~\cite[Theorem~1.1]{Panraksa2026Order49}.

\subsection*{The least full tail}

We say that $S(C_p\times C_p)$ contains the \emph{full tail} $p^e\Z$ if
every multiple of $p^e$ is a determinant. Our first result determines
the least full-tail exponent
\[
 e(p)=\min\{e\ge1:p^e\Z\subseteq S(C_p\times C_p)\}
\]
for every prime $7\le p<2^{31}$, and it shows that $e(p)$ depends on
the irregularity of $p$. Recall that $(p,2i)$ is an \emph{irregular pair}
if $2\le2i\le p-3$ and $p$ divides the numerator of the Bernoulli
number $B_{2i}$. By Kummer's criterion, $p$ is regular exactly when it
has no irregular pair; see \citet[Theorem~1]{Yager1982} for a statement
and references.

The answer is governed by the following invariant of $p$. Let $p\ge7$
be prime. We say that an integer $r\ge1$ is
\emph{admissible for $p$} if $(2r+1)i>r(p-1)$ for every irregular pair
$(p,2i)$ with $r\mid i$, and we let $r_p$ denote the least admissible
integer. Every integer $r>(p-3)/2$ is admissible, since then no irregular pair
$(p,2i)$ has $r\mid i$; hence $1\le r_p\le(p-1)/2$. If $p$ is regular,
then $r_p=1$. We call an integer $m$ prime to $p$ \emph{norm-free} if no
prime ideal of $\Z[\zeta_p]$ has norm dividing $m$; equivalently,
$\ell^{\ord_p(\ell)}\nmid m$ for every prime $\ell\mid m$, where
$\ord_p(\ell)$ is the multiplicative order of $\ell$ modulo $p$. The norm
of a prime ideal not dividing $p$ is congruent to $1$ modulo $p$, so
every integer $m$ with $1\le|m|<p$ is norm-free. Finally, $h_p^+$ denotes the class
number of $\Q(\zeta_p+\zeta_p^{-1})$.

\begin{theorem}\label{thm:intro-tail}
Let $p\ge7$ be prime.
\begin{enumerate}
\item[\textup{(a)}] Every multiple of $p^{p+3+2r_p}$ is an integer group
determinant of $C_p\times C_p$.
\item[\textup{(b)}] Suppose that $p\nmid h_p^+$. If $m$ is norm-free
and $N$ is an integer with $p+3\le N\le2p-2$ and $N<p+3+2r_p$, then
$\pm p^Nm\notin S(C_p\times C_p)$.
\item[\textup{(c)}] Consequently, if $p\nmid h_p^+$ and $r_p\le(p-5)/2$,
then $e(p)=p+3+2r_p$. Both hypotheses hold if $p<2^{31}$. The second
holds whenever $p$ is regular or has fewer than $(p-7)/4$ irregular
pairs.
\end{enumerate}
\end{theorem}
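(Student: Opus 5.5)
The proof will rest on the standard factorization of the group determinant of $G=C_p\times C_p$. Write $v$ as $F(x,y)\in\Z[x,y]/(x^p-1,y^p-1)$. Then
\[
\Theta(v)=F(1,1)\prod_{L}\Nm_{\Q(\zeta_p)/\Q}\bigl(F|_L\bigr),
\]
where $L$ runs over the $p+1$ lines (cyclic subgroups of order $p$ of the dual group). On each line, $F$ restricts to an element $\alpha_L\in\Z[\zeta_p]$, and the $\alpha_L$ satisfy compatibility congruences modulo powers of $\lambda=1-\zeta_p$ that link them with $F(1,1)$. I will first set up this dictionary. The constraint on a tuple $(F(1,1),(\alpha_L)_L)$ to come from one $F$ should be that certain linear combinations of the $\lambda$-adic digits of the $\alpha_L$ agree modulo $p$. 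This gives the affine systems over $\F$ alluded to in the abstract. The valuation of $\Theta$ is $v_p(F(1,1))+\sum_L v_\lambda(\alpha_L)$, and the baseline bound $p+3$ comes from this structure.

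For (a), I will construct $F$ from circular units. Given a target $p^{p+3+2r}m$ with $r=r_p$, choose $F(1,1)=\pm p^am$ and take most $\alpha_L$ to be $\lambda$ times a cyclotomic unit. Let one or two distinguished lines carry the extra valuation $2r$ through a factor such as $(1-\zeta^{r'})^{\cdot}$. The freedom in the units is the group of circular units modulo $p$-th powers, and I will parametrize it by the Kummer logarithmic derivatives $\phi_{2i}$. The compatibility conditions become linear equations in these derivatives modulo $p$. The circular unit $\eta_{2i}$ attached to the eigenspace $\omega^{2i}$ has logarithmic derivative governed by $B_{2i}$ (Kummer's congruence). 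So the system is solvable precisely when, for each eigencomponent that must be matched at $\lambda$-degree $\le 2r$-ish, the corresponding Bernoulli number is a $p$-unit. The condition $(2r+1)i>r(p-1)$ for $r\mid i$ should be exactly the statement that the needed eigencharacters avoid the irregular ones. I will check this by computing which characters $\omega^{j}$ appear when one substitutes $\zeta\mapsto\zeta^{k}$ along lines and expands in $\lambda$ to degree $2r$. Arbitrary $m$ is handled through the $F(1,1)$ factor together with $\alpha_L$ chosen of norm $\pm1$, so no norm-freeness assumption is needed in (a).

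For (b), I will argue by contradiction. If $\Theta=\pm p^Nm$ with $m$ norm-free, then no $\alpha_L$ can have a prime factor above any $\ell\mid m$. Hence each $\alpha_L$ is $\lambda^{k_L}$ times a unit, and $m$ is absorbed into $F(1,1)$. Under $p\nmid h_p^+$, the unit group modulo $p$-th powers is spanned by circular units and their eigenstructure is controlled by Bernoulli numbers. This is where that hypothesis enters: it makes units coincide with circular units up to $p$-th powers on the relevant even eigenspaces. The compatibility congruences then force a polynomial identity in the $\lambda$-adic digits. Its solvability with total excess valuation $N-(p+3)<2r_p$ would produce an irregular pair $(p,2i)$ with $r\mid i$ and $(2r+1)i\le r(p-1)$ for some $r<r_p$. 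That would make $r$ inadmissible, but minimality requires something stronger, so the precise bookkeeping must show that small excess forces exactly this kind of inadmissible witness. The restriction $N\le 2p-2$ keeps all congruences within a single $p$-adic layer.

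For (c), combine (a) and (b). By (b), $p^{p+2+2r_p}\cdot m$ with $m$ norm-free (for example $m=1$) is not a determinant, since $p+2+2r_p\le 2p-2$ when $r_p\le(p-5)/2$ (this condition gives $p+2+2r_p\le 2p-3$). So the tail $p^{p+2+2r_p}\Z$ fails, and (a) gives equality. Both hypotheses hold for $p<2^{31}$ by the known computational verification of Vandiver's conjecture and of irregular pairs in that range; I will cite these. For the last claim, each $r\le(p-5)/2$ is blocked only by an irregular pair with $r\mid i$ and $i\le r(p-1)/(2r+1)$. The ranges for different $r$ are essentially disjoint, so fewer than $(p-7)/4$ pairs cannot block all $r$ up to $(p-5)/2$; a counting argument will finish this.

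The main obstacle is the lower bound (b): showing that the Bernoulli obstruction is sharp, with the exact form $(2r+1)i>r(p-1)$, requires careful control of the $\lambda$-adic expansion of the compatibility polynomial across all $p+1$ lines.
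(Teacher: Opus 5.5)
Your deduction of (c) from (a) and (b) is the paper's, but (a) and (b) themselves are not proved, and the mechanism you sketch for them gives wrong answers. You treat the compatibility conditions as linear equations in the Kummer logarithmic derivatives, in which only eigencomponents up to $\lambda$-degree about $2r$ matter. For $r=1$ this involves only $B_2$, so it predicts $p^{p+5}\Z\subseteq S(C_p\times C_p)$ for every $p$. That is false. At $p=103$ the only irregular pair is $(103,24)$, far above degree $2$, yet it blocks $r=1,2,3,4$, and $e(103)=116$.

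The missing structure is the following.
\begin{itemize}
\item With a norm-free cofactor, every character value is $\varpi^{e_\delta}$ times a unit. The first moment singles out exactly one exceptional direction, not ``one or two'': after a change of coordinates the $p$ values $f_b$, $b\in\F$, have valuation $1$ and $f_\infty$ has valuation $e\ge2$.
\item Integrality of the reconstructed vector is then equivalent to degree conditions in the line parameter $b$. The $T^n$-coefficient $h_n(b)$ of $f_b$ modulo $p$ is a polynomial in $b$ of degree at most $n$. Its $b^n$-coefficient is the $T^n$-coefficient of the exceptional expansion. There is also one constant condition involving $s/p$.
\item These conditions are linear in the $h_n(b)$ but not in the logarithmic coordinates, and the obstruction is a nonlinear effect. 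If $e=2r+1$, then in $G_b=H_b/(\lambda T)$ the $T^{2r}$-coefficient $\beta_r(b)$ has exact degree $2r+1$. The $\beta_k$ with $k<r$ have degree at most $2k$, the others at most $2k+1$. Hence in the $T^{2i}$-coefficient of $\log G_b$, with $i=r\nu$, the term $(-1)^{\nu+1}\beta_r^\nu/\nu$ is the only one of degree $(2r+1)\nu$.
\item For irregular $i$ and $p\nmid h_p^+$, we have $\kappa_i=0$ and no unit has a nonzero $T^{2i}$-coordinate. So this polynomial must vanish on all of $\F$, which is impossible when $(2r+1)\nu\le p-1$.
\item Dually, in (a) one takes $H_b=T\,P(b^{2r+1}T^{2r})$, with $P$ chosen so that $[X^\nu]\log P=0$ whenever $r\nu$ is irregular. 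The inequality $(2r+1)\nu>p-1$ is exactly what lets the correction terms $c_\nu b^{(2r+1)\nu}T^{2r\nu+1}$ reduce modulo $b^p-b$ below degree $2r\nu+1$, so they satisfy the moment conditions.
\end{itemize}

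Two further steps of (b) are missing, and without them the count does not close.
\begin{itemize}
\item The case $e=2$ must be excluded separately. Normalizing $h_2=\lambda ab^2$, the vanishing $T^3$-coordinate of $\log G_b$ gives $g_3=ab^2g_2-a^3b^6/3$, of exact degree $6$, against the bound $4$.
\item For $e\ge3$, a translation makes $h_2\equiv0$. Then the generic and exceptional transforms have no terms of even degree. This forces $e$ to be odd (so $e\le p-4$), and through the constant moment condition it forces $p^2\mid s$.
\end{itemize}
Only then is $N=v_p(s)+p+2r+1\ge p+3+2r$. The obstruction must be read as ``a determinant with $e=2r+1$ forces $r$ to be admissible'', which gives $r\ge r_p$ and the contradiction. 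Your paragraph runs this implication in the wrong direction.

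For the last sentence of (c), the sets of $r$ blocked by different irregular pairs are not disjoint: $i=12$ at $p=103$ blocks four values. Restrict instead to $(p-3)/4<r\le(p-5)/2$. There $2r>(p-3)/2$ leaves $i=r$ as the only possible blocking half-index, so each pair blocks at most one of these at least $(p-7)/4$ integers.

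Finally, when $r_p=(p-1)/2$ your construction would need exceptional valuation $p$. The paper handles this case by citing $p^{2p+2}\Z\subseteq S(C_p\times C_p)$ from Mossinghoff and Pinner.
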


The hypothesis $p\nmid h_p^+$ is Vandiver's conjecture for $p$. It
enters only through the image of the global units of $\Z[\zeta_p]$ in
$\Z[\zeta_p]/(p)$, and Theorem~\ref{thm:tail} states an unconditional
version in terms of that image. For $N=p+3$ and $N=p+4$, part~(b) holds
without the hypothesis $p\nmid h_p^+$; for $p=7$ and $m=\pm1$ these two
cases were proved by \citet{Panraksa2026Order49}. Hart, Harvey and Ong verified Vandiver's
conjecture for every prime below $2^{31}$ and found that no such prime
has more than nine irregular pairs~\cite{HartHarveyOng2017}. Their
computation gives the unconditional range in part~(c).

\begin{corollary}\label{cor:intro-p5}
Let $p\ge7$ be a prime with $p\nmid h_p^+$. Then $p^{p+5}\Z\subseteq
S(C_p\times C_p)$ if and only if every irregular pair $(p,2i)$
satisfies $3i>p-1$. The smallest prime for which this fails is $p=103$,
whose only irregular pair is $(103,24)$. For this prime
$r_{103}=5$ and $e(103)=116$; thus every multiple of $103^{116}$ occurs,
whereas $\pm103^N\notin S(C_{103}\times C_{103})$ for
$106\le N\le115$.
\end{corollary}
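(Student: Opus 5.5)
This corollary follows from Theorem~\ref{thm:intro-tail} together with a finite computation for $p=103$. The plan is to rewrite the condition $p^{p+5}\Z\subseteq S(C_p\times C_p)$ as $e(p)\le p+5$, and then show that $e(p)\le p+5$ holds exactly when $r_p=1$.

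For the direction ``if'', note that $r=1$ is admissible precisely when $3i>p-1$ for every irregular pair $(p,2i)$, because the divisibility condition $1\mid i$ is vacuous. Under this hypothesis $r_p=1$, and part~(a) gives $p^{p+5}\Z\subseteq S(C_p\times C_p)$ without any further assumption.

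For the direction ``only if'', suppose some irregular pair has $3i\le p-1$. Then $r_p\ge2$, so $p+5<p+3+2r_p$. We want to apply part~(b) with $N=p+5$ and $m=1$; the integer $m=1$ is norm-free. This requires $p+5\le 2p-2$, which holds because $p\ge7$. Part~(b) then shows that $p^{p+5}\notin S(C_p\times C_p)$, so the tail $p^{p+5}\Z$ is not contained in $S(C_p\times C_p)$. This is the only place where the hypothesis $p\nmid h_p^+$ is used. Part~(c) is not needed here, since part~(b) already applies at $N=p+5$ whenever $r_p\ge2$.

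For $p=103$ the argument has four steps.
\begin{enumerate}
\item[(i)] Quote the tables of irregular pairs, for instance those of \cite{HartHarveyOng2017}. They show that the irregular pairs with $p\le 101$ are $(37,32)$, $(59,44)$, $(67,58)$ and $(101,68)$. Each satisfies $3i>p-1$: for example $48>36$ and $66>58$. The only irregular pair for $103$ is $(103,24)$, and there $3\cdot12=36\le102$. So $103$ is the first prime $p\ge7$ at which the condition fails.
\item[(ii)] Compute $r_{103}$, with $i=12$. If $r\nmid12$, the condition defining admissibility is vacuous for $r$. Thus $r=5$ is admissible, and the only candidates that could be admissible below it are $r=1,2,3,4$, all of which divide $12$. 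Each of these fails, because $(2r+1)\cdot12>102r$ requires $12>78r$, which is false. Hence $r_{103}=5$ and $p+3+2r_p=116$.
\item[(iii)] Check the hypotheses of part~(c). Vandiver's conjecture holds for $103$, since $103<2^{31}$. Also $r_{103}=5\le(103-5)/2=49$. Part~(c) therefore gives $e(103)=116$.
\item[(iv)] Apply part~(b) with $m=\pm1$ for each $N$ with $106\le N\le115$. These values satisfy $N\le 2p-2=204$ and $N<116$, so $\pm103^N\notin S(C_{103}\times C_{103})$.
\end{enumerate}

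The only step that is not formal is the appeal to the Bernoulli-number tables. It is needed both to confirm minimality in step~(i) and to know that $(103,24)$ is the unique irregular pair for $103$. The latter is essential: a second irregular pair could change $r_{103}$.
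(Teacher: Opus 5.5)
Your proposal is correct and follows essentially the same route as the paper. Part~(a) of Theorem~\ref{thm:intro-tail} gives the ``if'' direction, and part~(b) with $N=p+5$, $m=1$ (using $p+5\le2p-2$) gives the ``only if'' direction. The irregular pairs from \cite{HartHarveyOng2017} give minimality and $\Irr=\{12\}$ for $p=103$; since $r=1,2,3,4$ divide $12$ and fail the admissibility inequality, $r_{103}=5$, and parts~(c) and~(b) then give $e(103)=116$ and the exclusions for $106\le N\le115$.
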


Table~\ref{tab:tail} in Section~\ref{sec:examples} lists $e(p)$ for the
irregular primes below $400$. Among the $2259$ primes $7\le p<20000$,
exactly $657$ have $e(p)>p+5$. If the events $p\mid B_{2i}$ were
independent with probability $1/p$, as in Siegel's heuristic for
irregular primes, the expected proportion of such primes would be
$1-\exp(-1/3)\approx0.283$; the observed proportion below $20000$ is
$657/2259\approx0.291$.

The proof of Theorem~\ref{thm:intro-tail} rests on a polynomial
obstruction. If $\pm p^Nm\in S(C_p\times C_p)$ with $m$ norm-free and
$p+3\le N\le2p-2$, then every nontrivial character value is a power of
$1-\zeta_p$ times a unit. After a linear change of coordinates, the
$p+1$ Galois orbits of nontrivial characters consist of $p$
\emph{generic} orbits, indexed by a parameter $b\in\F$, whose values
have valuation one, and one \emph{exceptional} orbit, whose value has
valuation $e\ge2$. We expand these values modulo $p$ in the logarithmic
variable $T=\log\zeta_p$. The coefficients of the generic expansions are
then polynomials in $b$, and their degrees are bounded by the moment
conditions that characterize integral group-ring elements. On the other
hand, when Vandiver's conjecture holds for $p$, the logarithms of the
global units of $\Z[\zeta_p]$ all have vanishing $T^{2i}$-coordinate
modulo $p$ precisely when $(p,2i)$ is an irregular pair. Suppose that
$e=2r+1$ and that $r$ divides $i$ for such a pair. Then the
$T^{2i}$-coordinate of the logarithm of a generic expansion is a
polynomial in $b$ of exact degree $(2r+1)i/r$, and it must vanish at
every $b\in\F$. This is impossible when $(2r+1)i\le r(p-1)$, because the
degree is then less than $p$. Conversely, when $r\le(p-3)/2$ is
admissible, products of circular units give compatible character values with the
expansions $T\,P(b^{2r+1}T^{2r})$ and $T^{2r+1}$, for a polynomial $P$
chosen according to the irregular pairs, and every multiple of
$p^{p+3+2r}$ follows.

The same analysis determines the smallest $p$-divisible determinants.

\begin{corollary}\label{cor:intro-least}
Let $p\ge11$ be prime. Every nonzero $p$-divisible integer group
determinant $\Theta$ of $C_p\times C_p$ satisfies $|\Theta|\ge p^{p+5}$.
Equality occurs if every irregular pair $(p,2i)$ satisfies $3i>p-1$, in
particular if $p$ is regular. If $p\nmid h_p^+$, then this condition is
also necessary for equality.
\end{corollary}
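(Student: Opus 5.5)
We argue from the facts recalled in the introduction and from Theorem~\ref{thm:intro-tail}. Every nonzero $p$-divisible determinant has $p$-adic valuation $N\ge p+3$. Write $\Theta=\pm p^N m$ with $p\nmid m$. The lower bound $|\Theta|\ge p^{p+5}$ then splits into three cases.

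\emph{Case $N\ge p+5$.} Here $|\Theta|\ge p^N\ge p^{p+5}$ at once.

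\emph{Case $N\in\{p+3,p+4\}$ with $m$ norm-free.} By the remark following Theorem~\ref{thm:intro-tail}, part~(b) holds for $N=p+3$ and $N=p+4$ without assuming $p\nmid h_p^+$. Since $r_p\ge1$, both values satisfy $N<p+3+2r_p$, and for $p\ge11$ both satisfy $N\le2p-2$. So these determinants do not occur.

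\emph{Case $N\in\{p+3,p+4\}$ with $m$ not norm-free.} Then some prime $\ell\mid m$ has $\ell^{\ord_p(\ell)}\mid m$. Since $\ell^{\ord_p(\ell)}\equiv1\pmod p$ and this is a prime power other than $1$, we have $\ell^{\ord_p(\ell)}\ge p+1$, so $|m|\ge p+1$. Hence $|\Theta|\ge p^{p+3}(p+1)$. This is still below $p^{p+5}$, so the crude bound is not enough.

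The real work is in this last case. We need $|m|\ge p^2$ when $N=p+3$ and $|m|\ge p$ when $N=p+4$; the second already holds because $|m|\ge p+1$. For $N=p+3$ we must exclude $p+1\le|m|<p^2$. The plan is to rerun the valuation bookkeeping behind the polynomial obstruction. At $N=p+3$ every character orbit has minimal valuation: the $p$ generic orbits have valuation one and the exceptional orbit has valuation $2$. A non-norm-free $m$ means some Galois orbits of characters, at most $p+1$ of them, take values divisible by a prime of norm dividing $m$, and the product of these norms must divide $m$. The constraint on the generic orbits indexed by $b\in\F$ should force the same prime norm to occur in at least $p$ of them, or else the argument at $T^{2}$, $T^{4}$ in the expansion of the exceptional orbit goes through as in the norm-free case. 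In the first alternative $|m|\ge(p+1)^{p}$, far more than needed. Making this dichotomy rigorous is the main obstacle. I would try to show that a prime factor in only a few generic orbits does not disturb the degree argument modulo $p$, since it changes only finitely many $b$ and the polynomial must still vanish at most points. If this fails, I would fall back on the explicit regular-prime characterization announced in the abstract, which says that prime cofactors do not occur at valuation $p+3$, and try to extend it to small composite cofactors.

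For equality, suppose every irregular pair satisfies $3i>p-1$. Then $r=1$ is admissible, so $r_p=1$, and Theorem~\ref{thm:intro-tail}(a) gives $p^{p+5}\Z\subseteq S(C_p\times C_p)$; in particular $p^{p+5}$ is a determinant. For necessity under $p\nmid h_p^+$, suppose some irregular pair has $3i\le p-1$. Then $r_p\ge2$, so $N=p+5<p+3+2r_p$, and $p+5\le2p-2$ because $p\ge11$. Part~(b) with $m=\pm1$, which is norm-free, then shows that $\pm p^{p+5}\notin S(C_p\times C_p)$. By the lower bound, no determinant of absolute value below $p^{p+5}$ is $p$-divisible and nonzero, so the minimum $p^{p+5}$ is not attained.
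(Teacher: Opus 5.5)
Your treatment of $N\ge p+5$, of $N=p+4$ (norm-free cofactors excluded by Theorem~\ref{thm:intro-tail}(b), otherwise $|m|\ge p+1$), and of sufficiency and necessity for equality is correct and agrees with the paper. The gap is the case $N=p+3$ with $m$ not norm-free. This is exactly where the lower bound needs an input beyond Theorem~\ref{thm:intro-tail}, and you leave it unproved.

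The dichotomy you propose is not the right mechanism, since nothing forces a single prime norm to recur in $p$ generic orbits. Your fallback cannot close the gap either. The regular-prime results cover only regular $p$ and prime cofactors, whereas the corollary is claimed for every $p\ge11$ and every cofactor. Moreover, the paper deduces the exclusion of prime cofactors at valuation $p+3$ from the very argument you are missing.

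The missing idea is the support restriction, Proposition~\ref{prop:support}. Rerun the computation of Lemma~\ref{lem:val-two} without assuming that the cofactors are units.

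\begin{itemize}
\item At valuation $p+3$ the profile is $(1;2)$. After a translation, $h_1=\lambda\ne0$ and $h_2(b)=\lambda ab^2$ with $a\ne0$.
\item Put $z_b=f_b/\varpi$ and $g_k=h_{k+1}/\lambda$. The coefficient of $T^3$ in $\log\bigl(H_b/(\lambda T)\bigr)$ equals $c_3(z_b)$, because $\log E(T)$ has no odd terms of degree at least three. It also equals $g_3(b)-ab^2g_2(b)+a^3b^6/3$.
\item The moment conditions give $\deg g_2\le3$ and $\deg g_3\le4$. Hence $b\mapsto c_3(z_b)$ is a polynomial of exact degree six, with no reduction modulo $b^p-b$ since $6\le p-1$.
\item Such a polynomial has at most six zeros, so $c_3(z_b)\ne0$ for at least $p-6$ values of $b$.
\item Global units have vanishing odd coordinates (Lemma~\ref{lem:units}(i)). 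So each such $z_b$ generates a proper ideal prime to $\varpi$, of norm at least $p+1$.
\end{itemize}

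Hence $|m|\ge(p+1)^{p-6}>p^2$ for $p\ge11$, and $|\Theta|>p^{p+5}$.

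Your intuition runs the wrong way. The point is not that the degree argument survives because the polynomial still vanishes at most points. It is that the polynomial cannot vanish at more than six points, which forces nontrivial cofactors on almost all generic rays. This argument uses neither regularity nor $p\nmid h_p^+$, as the unconditional lower bound requires.
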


For $p=3$, $5$ and $7$ the least positive $p$-divisible values are $3^6$,
$5^8$ and $43\cdot7^{10}$, respectively~\cite[Theorem~6.1]{PinnerSmyth2020},
\cite{Panraksa2026Order25}, \cite[Corollary~1.2]{Panraksa2026Order49}. These values are $p^{p+3}$ for $p=3,5$ and
$43p^{p+3}$ for $p=7$, whereas, by Corollary~\ref{cor:intro-least}, the
least value is $p^{p+5}$ for every regular prime $p\ge11$. For $p\ge11$
the valuations $p+3$ and $p+4$ no longer give the least value.
At valuation $p+3$ there is a support restriction: at least $p-6$
generic character values must have a nontrivial cofactor ideal, that is,
they are not a power of $1-\zeta_p$ times a unit
(Proposition~\ref{prop:support}). At valuation $p+4$ the cofactor cannot
be norm-free; this is the case $N=p+4$ of
Theorem~\ref{thm:intro-tail}(b), a case that needs no hypothesis on
$h_p^+$.

\subsection*{The critical cofactors at regular primes}

Let $p\ge7$ be a regular prime. Then $r_p=1$, so
Theorem~\ref{thm:intro-tail}(a) gives $p^{p+5}\Z\subseteq
S(C_p\times C_p)$, and only the two \emph{critical} valuations $p+3$ and
$p+4$ remain to be described. Put
\[
 \mathcal U_p=\{n\in\Z:n^{p-1}\equiv1\pmod{p^2}\}.
\]
In Section~\ref{sec:allocation} we define two sets $\mathcal C_{2,p}$
and $\mathcal C_{23,p}$ of positive integers prime to $p$. An integer
$m$ belongs to $\mathcal C_{2,p}$ if the prime ideals above its prime
factors can be allocated to the $p+1$ Galois orbits of nontrivial
characters (the character rays) so that every ray ideal is principal and
an explicit test $\Ltwo$, consisting of at most $\gcd(3,p-1)$ affine
systems over $\F$, accepts; the definition of $\mathcal C_{23,p}$ also
allows a second test $\Lthree$, consisting of $p-1$ affine systems that
depend on $m\bmod p$. The indices refer to the valuation $2$ or $3$ of
the exceptional character value. The tests are defined in
Section~\ref{sec:systems} without reference to determinants.

\begin{theorem}\label{thm:main}
For every regular prime $p\ge7$,
\begin{equation}\label{eq:intro-whole-set}
 S(C_p\times C_p)=\mathcal U_p
 \cup\{\pm p^{p+3}m:m\in\mathcal C_{2,p}\}
 \cup\{\pm p^{p+4}m:m\in\mathcal C_{23,p}\}
 \cup p^{p+5}\Z.
\end{equation}
Membership in $\mathcal C_{2,p}$ and $\mathcal C_{23,p}$ is decided by a
finite computation, and every accepted allocation yields explicit integer
group coefficients realizing the corresponding determinant.
\end{theorem}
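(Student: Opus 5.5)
Theorem~\ref{thm:main} is a bookkeeping consequence of three ingredients, which we assemble as follows. First, the values prime to $p$ are exactly $\mathcal U_p$ by the Odoni criterion recalled above \cite{Odoni1983,DeSilvaPinner2014}. Second, every nonzero $p$-divisible value has valuation at least $p+3$ (Trafford). Third, since $p$ is regular we have $r_p=1$, so Theorem~\ref{thm:intro-tail}(a) gives $p^{p+5}\Z\subseteq S(C_p\times C_p)$; it also covers $0$. It therefore remains to show that the integers of exact valuation $p+3$ in $S$ are precisely $\pm p^{p+3}m$ with $m\in\mathcal C_{2,p}$, and that those of exact valuation $p+4$ are precisely $\pm p^{p+4}m$ with $m\in\mathcal C_{23,p}$.

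\emph{Necessity.} Take $v$ with $\Theta(v)=\pm p^Nm$, where $N\in\{p+3,p+4\}$ and $p\nmid m$. Factor $\Theta(v)$ as the trivial character value times the norms of the $p+1$ ray values $\alpha_\rho\in\Z[\zeta_p]$.
\begin{enumerate}
\item The congruence constraints on the trivial character value, combined with the total valuation, force the valuation pattern: after the linear change of coordinates, the $p$ generic rays have $\p$-valuation one, and the exceptional ray has valuation $e=N-p-1\in\{2,3\}$. (For $N=p+4$ one must also rule out the split in which the trivial character carries an extra factor of $p$; this is why $\mathcal C_{23,p}$ allows both tests.)
\item Write $\alpha_\rho=(1-\zeta_p)^{e_\rho}\beta_\rho$ with $\beta_\rho$ prime to $p$. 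The ideals $(\beta_\rho)$ give an allocation of the prime ideals above the prime factors of $m$ to the rays, and each ray ideal is principal.
\item Since $p$ is regular, Kummer's lemma and the unit image in $\Z[\zeta_p]/(p)$ are fully controlled. Expanding the normalized generators in $T=\log\zeta_p$, the moment conditions for integral group-ring elements become the affine systems $\Ltwo$ (when $e=2$) or $\Lthree$ (when $e=3$, depending on $m\bmod p$). These systems must then be solvable.
\end{enumerate}
This is exactly membership in the appropriate set, by the definitions in Sections~\ref{sec:systems} and~\ref{sec:allocation}.

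\emph{Sufficiency.} Reverse the argument. Given an accepted allocation and a solution of the affine system, choose generators of the principal ray ideals. Adjust each generator by units, which regularity makes surjective onto the relevant quotient, so that its $T$-expansion matches the solution. Then check that the resulting family of character values satisfies the integrality criterion for $\Z[C_p\times C_p]$; this is the standard CRT/moment characterization of the image of the group ring in $\prod_\chi\Z[\zeta_p]$. Inverting the Fourier transform produces explicit integer coefficients $v$ with the required determinant. The sign is adjusted through the trivial character value, which is consistent because the determinant's sign can be flipped while keeping the congruences.

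\emph{Finiteness.} Membership is decidable by a finite computation. There are finitely many allocations of the primes above $m$, principality is decidable in $\Z[\zeta_p]$, and each test consists of finitely many affine systems over $\F$.

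The main obstacle is the exact equivalence in the necessity step: the integrality (moment) conditions must translate into \emph{precisely} the systems $\Ltwo$ and $\Lthree$, with no hidden extra constraints from higher-order terms of the $T$-expansion. For this I would first rely on the fact that, modulo $p$, only finitely many coefficients matter once the valuations are $1$ and $e\le3$.
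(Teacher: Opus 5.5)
Your decomposition is the paper's: $\mathcal U_p$ from the prime-to-$p$ criterion, no nonzero values of valuation between $1$ and $p+2$, the tail $p^{p+5}\Z$ from $r_p=1$ and Theorem~\ref{thm:intro-tail}(a), and a separate description of the two critical valuations. For that last piece, the paper's proof of Theorem~\ref{thm:main} simply invokes Theorem~\ref{thm:scalar}, which rests on Theorem~\ref{thm:placed}. The equivalence you single out as the main obstacle is exactly Theorem~\ref{thm:placed}, and it does not hold ``by the definitions'' of Sections~\ref{sec:systems} and~\ref{sec:allocation}.

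Your remark that only finitely many coefficients matter is~\eqref{eq:window}: the rays modulo $p$ see the transforms modulo $T^p$. That alone does not give exactness. Exactness comes from three further inputs. The local law~\eqref{eq:local-law} shows that the only constraints a fixed principal ray ideal imposes on its transform are the visible odd logarithmic coordinates. Lemma~\ref{lem:affine-triangular} makes every coefficient affine in the raw even coefficients. Finally, the moment conditions are imposed at full precision: degrees up to $p-2$, plus the constant condition in degree $p-1$. Cite these results rather than reprove them.

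One step of your sketch is wrong as written. At $N=p+4$ you put $e=N-p-1=3$ and speak of ruling out the split with $v_p(s)=2$. That split cannot be ruled out. By Lemma~\ref{lem:profiles}, $N=v_p(s)+p+e$ with $v_p(s)\ge1$ and $e\ge2$, so valuation $p+4$ allows the profiles $(2;2)$ and $(1;3)$, and $(2;2)$ does occur. It is precisely the $\Ltwo$ branch of $\mathcal C_{23,p}$, realized with $s=p^2m/Q_0$, where $Q_0$ is the product of the allocated norms; $\Ltwo$ does not depend on $\tau$. Only the profile $(1;3)$ leads to $\Lthree$, with $s=pm/Q_0$ and $\tau\equiv m\pmod p$ because $Q_0\equiv1\pmod p$.

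For the same reason, obtain negative values by negating the whole vector, as the paper does, and not by changing the sign of $s$ alone. The residue $\tau=s/p$ enters the constant equation of $\Lthree$, so replacing $s$ by $-s$ would require $\Lthree(D,-m\bmod p)$ to accept instead of $\Lthree(D,m\bmod p)$.
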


The criterion is effective. For each allocation there are at most
$\gcd(3,p-1)$ affine systems at valuation $p+3$ and at most $p-1$
further systems at valuation $p+4$, and each system has $(p^2-9)/4$
unknowns and at most $(p-3)^2/4+1$ equations
(Proposition~\ref{prop:affine-count}). Because the ideal classes are
tracked separately in each character ray, the criterion also applies
when the class group of $\Z[\zeta_p]$ is nontrivial. For $p=7$, the
sets $\mathcal C_{2,7}$ and $\mathcal C_{23,7}$ have closed-form
descriptions in terms of two invariants of prime
ideals~\cite[Theorem~1.1]{Panraksa2026Order49}.

The criterion becomes completely explicit for prime cofactors. Write
$K=\Q(\zeta_p)$, $R=\Z[\zeta_p]$ and $h=|\Cl(R)|$. For a principal
prime ideal $\mathfrak q=yR$ with $\mathfrak q\nmid p$, let
$d_j(\mathfrak q)$ be the coefficient of $T^j$ in the logarithm of
$y/(y\bmod(1-\zeta_p))$ modulo $p$, where $T=\log\zeta_p$; for odd
$j\ge3$ this coefficient does not depend on the generator $y$
(Section~\ref{sec:units}).

\begin{theorem}\label{thm:intro-primes}
Let $p\ge11$ be a regular prime and let $\ell\ne p$ be a rational
prime. Then $\pm p^{p+3}\ell\notin S(C_p\times C_p)$. Moreover,
$\pm p^{p+4}\ell\in S(C_p\times C_p)$ if and only if
$\ell\equiv1\pmod p$ and one (equivalently, every) prime ideal
$\mathfrak q$ above $\ell$ is principal with
\[
 d_3(\mathfrak q)=d_5(\mathfrak q)=\cdots=d_{p-6}(\mathfrak q)=0
 \quad\text{and}\quad
 (d_{p-4}(\mathfrak q),d_{p-2}(\mathfrak q))\ne(0,0).
\]
These primes have natural density $(p+1)/(hp^{(p-3)/2})$ among all
rational primes.
\end{theorem}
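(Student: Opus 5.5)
The plan is to specialize Theorem~\ref{thm:main} to $m=\ell$ prime and make the tests $\Ltwo$ and $\Lthree$ explicit.

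\emph{Setup.} By Theorem~\ref{thm:main}, $\pm p^{p+3}\ell\in S$ iff $\ell\in\mathcal C_{2,p}$, and $\pm p^{p+4}\ell\in S$ iff $\ell\in\mathcal C_{23,p}$. The norm of a prime $\mathfrak q\mid\ell$ is $\ell^{\ord_p(\ell)}$. If $\ord_p(\ell)>1$, then $\ell$ is norm-free. In that case Theorem~\ref{thm:intro-tail}(b), applied at $N=p+3$ and $N=p+4$ (which, as remarked after the theorem, need no hypothesis on $h_p^+$), excludes both valuations. So assume $\ell\equiv1\pmod p$. Then $\ell R$ splits into $p-1$ prime ideals permuted transitively by the Galois group.

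In an allocation, exactly one ray carries a cofactor. Its ray ideal is $\mathfrak q$ times a power of $(1-\zeta_p)$, so principality of the ray ideal forces $\mathfrak q$ principal. By Galois conjugation this holds for one prime above $\ell$ iff it holds for all, which gives the parenthetical "equivalently". All other rays have norm-free values, which are powers of $1-\zeta_p$ times units. The one carrying $\mathfrak q$ is either a generic orbit, with value $y\cdot(1-\zeta_p)^{0}$ of valuation $1$ adjusted, or the exceptional orbit.

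\emph{Valuation $p+3$.} Proposition~\ref{prop:support} says at least $p-6$ generic character values have a nontrivial cofactor ideal. A single prime supplies at most one such ray, and $p-6\ge5>1$ for $p\ge11$. Hence $\ell\notin\mathcal C_{2,p}$.

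\emph{Valuation $p+4$.} The exceptional value has valuation $3$, and by $N=p+4$ in Theorem~\ref{thm:intro-tail}(b) the cofactor cannot sit entirely in norm-free form. So I run the polynomial obstruction with one perturbed ray.

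In the logarithmic expansion in $T$, each generic ray $b$ has $T^{j}$-coordinates that are polynomials in $b$ of bounded degree, fixed by the moment conditions. Since $p$ is regular, all unit $T^{2i}$-coordinates are free for $2\le 2i\le p-3$. The unit contributions therefore absorb even coordinates, and the constraint falls on odd coordinates $j\ge3$. These are exactly the generator-independent invariants $d_j$ of Section~\ref{sec:units}.

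Placing $\mathfrak q$ on a generic ray $b_0$ changes the ray-$b_0$ value by $d_j(\mathfrak q)T^j$. The polynomial identities in $b$ then force the following. For $j\le p-6$ the degree in $b$ of the required polynomial is below $p-1$, so a change at a single point is impossible unless $d_j=0$. The top two odd coordinates $j=p-4,p-2$ involve degree $\ge p-1$ terms like $b^{p-1}$, i.e.\ indicator functions of $b_0$, and can absorb a single-point perturbation.

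If instead $d_{p-4}=d_{p-2}=0$, all coordinates vanish, and $\mathfrak q$ acts like a norm-free cofactor. That is excluded by the $N=p+4$ obstruction, giving the condition $(d_{p-4},d_{p-2})\ne(0,0)$. Placing $\mathfrak q$ on the exceptional ray should be handled similarly and subsumed.

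Sufficiency follows from the explicit affine systems $\Lthree$: the solvability of these systems reduces to exactly this condition. This identification of the $\Lthree$ systems with the $d_j$ conditions is the main obstacle. I would first verify it by computing the degree bookkeeping for $T^{j}$-coordinates, $j$ odd, under the valuation-$3$ exceptional expansion $T^3$.

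\emph{Density.} By Chebotarev applied to the Hilbert class field $H$, primes $\ell\equiv1\pmod p$ with principal $\mathfrak q$ have density $1/((p-1)h)$ times $(p-1)$, i.e.\ $1/h$ among primes.

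For the $d_j$ with $j$ odd, $3\le j\le p-2$, there are $(p-3)/2$ coordinates. They are values of a homomorphism from principal ideals prime to $p$, modulo units, into $\F^{(p-3)/2}$. This is a ray class character of conductor a power of $(1-\zeta_p)$, surjective by regularity. By Chebotarev in the corresponding ray class field, the $d$-vectors are equidistributed.

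The condition "first $(p-7)/2$ coordinates vanish, last two not both zero" has proportion $(p^2-1)/p^{(p-3)/2}$. The total density is therefore $(p^2-1)/(h\,p^{(p-3)/2})$ divided by the $p-1$ primes above each $\ell$, giving $(p+1)/(hp^{(p-3)/2})$.
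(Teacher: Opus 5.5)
Your proof of the valuation-$p+3$ exclusion is fine. The characterization at valuation $p+4$ has a genuine gap: sufficiency is not proved, and the mechanism you sketch for it is wrong in a way that matters.

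\textbf{Generic placement.} In a generic ray $b_0$ one has $D_j=d_j\mathbf 1_{b_0}$ with $\deg\mathbf 1_{b_0}=p-1$. Once the lower odd defects vanish, $h_{j+1}=D_j$ for every odd $j\le p-4$, not only for $j\le p-6$. The moment bound $\deg h_{j+1}\le j+1\le p-3$ therefore forces $d_{p-4}=0$ as well. Only $d_{p-2}$ survives, because it enters only $h_{p-1}$, which has no degree bound. With $s=p$ and $\tau=1$, the constant moment condition forces $\sum_bh_{p-1}(b)=d_{p-2}=-1$, and Galois conjugation achieves this whenever $d_{p-2}\ne0$. So a generic placement realizes exactly $d_3=\cdots=d_{p-4}=0\ne d_{p-2}$.

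\textbf{Exceptional placement.} This case is not subsumed; it is the only way to obtain $d_{p-4}\ne0$. The example $\ell=77419$ at $p=11$, with $(d_3,d_5,d_7,d_9)=(0,0,6,5)$, is of this kind. With $\mathfrak q$ at infinity:
\begin{itemize}
\item the generic transforms are odd, and the exceptional even coefficients through degree $p-3$ vanish;
\item the relative series $H_\infty/(aT^3)$ has vanishing odd coefficients in degrees $1,\dots,p-6$, which gives $d_3=\cdots=d_{p-6}=0$;
\item $d_{p-4}$ is its top visible odd coordinate, and $d_{p-2}$ is invisible;
\item the constant moment condition becomes $1+a\,d_{p-4}=0$, solvable by $a=-d_{p-4}^{-1}$.
\end{itemize}

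\textbf{What sufficiency requires.} You must exhibit compatible transforms in each case and then lift them. For example, take $H_b=T+ab^3T^3$ and $H_\infty=aT^3+a\beta T^{p-1}$ with $\beta=d_{p-4}$. In the other case, after normalizing $d_{p-2}=-1$, take $H_b=T+ab^3T^3-\mathbf 1_0(b)T^{p-1}$ and $H_\infty=aT^3$. Both use $s=p$, followed by the unit lift and Lemma~\ref{lem:reconstruct}. Your sentence ``the solvability of these systems reduces to exactly this condition'' is precisely what has to be proved, not a step to defer.

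\textbf{Smaller gaps in the necessity direction.}
\begin{itemize}
\item At valuation $p+4$ you assume $e=3$, but the profile $(2;2)$ must also be excluded. Proposition~\ref{prop:support} is stated only at valuation $p+3$, although its proof uses only $e=2$. Alternatively, Proposition~\ref{prop:prime-support} covers both critical valuations.
\item Your claim that the exceptional case is ``handled similarly'' still needs the argument above to give $d_3=\cdots=d_{p-6}=0$ there.
\item You cannot dispatch the case of all defects vanishing by citing Theorem~\ref{thm:intro-tail}(b), because $\ell\equiv1\pmod p$ is not norm-free. Instead, rerun the parity argument, using that a principal generator with all odd coordinates zero has the residue of a unit modulo $pR$. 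This gives $p^2\mid s$, contradicting $s=p$.
\end{itemize}

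\textbf{Density.} Primes $\ell\equiv1\pmod p$ with $\mathfrak q$ principal have density $1/((p-1)h)$ among rational primes, not $1/h$. Your later division by $p-1$ cancels this error, but the reason you give for it is wrong. The clean bookkeeping is Chebotarev in $L/\Q$ with $|\Gamma|=(p-1)hp^{(p-3)/2}$ and a conjugation-stable set of $p^2-1$ classes. Multiplying the class proportion by the defect proportion also needs the product decomposition $\Cl_{pR}(K)\cong\Cl(R)\times\F^{(p-3)/2}$ of Lemma~\ref{lem:prime-ray-class}, not just surjectivity of the defects on principal ideals.
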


For example, the least prime $\ell$ with $11^{15}\ell\in
S(C_{11}\times C_{11})$ is $\ell=77419$, and the least prime $\ell$ with
$13^{17}\ell\in S(C_{13}\times C_{13})$ is $\ell=6285241$; see
Section~\ref{sec:examples}, where the densities are also compared with
counts of primes.

\subsection*{Methods and relation to earlier work}

All results rest on one reduction. A vector $v\in\Z^G$ is determined by
its $p+1$ ray vectors, obtained by summing $v$ along the parallel lines
of $G$ in each of the $p+1$ directions, and a family of integer ray
vectors comes from an integer vector exactly when certain congruences
modulo $p$ hold. This integral reconstruction is the pull-back
description of $\Z[G]$ given by \citet[Lemma~4 and~(2.6)]{Hoechsmann2002}.
We show that the congruences hold exactly when the polynomial moments of
the ray vectors modulo $p$ satisfy the degree conditions of
Lemma~\ref{lem:moments}, a finite-field version of the consistency
conditions for projections. Trafford used homogeneous moments of this
kind to obtain the valuation bound~\cite[Chapter~2, Lemma~13]{Trafford1992},
and the treatment of the case $p=7$ in~\cite{Panraksa2026Order49}
combined the reconstruction with logarithmic coordinates of units
modulo~$p$.

The image of the global units of $\Z[\zeta_p]$ modulo $p$ is classical
when $p\nmid h_p^+$. It is determined by Kummer's logarithmic
derivatives of the circular units and the index formula
$[E^+:C^+]=h_p^+$ for the real circular units $C^+$ in the real units
$E^+$~\cite[Section~8.3 and Theorem~8.2]{Washington1997}, and
\citet[arXiv version, Lemmas~3.3--3.4]{Hoelscher2010} states it in
terms of the unit filtration. Lemma~\ref{lem:units} gives
a self-contained account in our coordinates. The new input for
Theorem~\ref{thm:intro-tail} is the interaction of this image with the
moment conditions. A degree count shows that certain irregular pairs
obstruct small exponents (Lemma~\ref{lem:obstruction}), and a polynomial
adapted to the irregular pairs gives a matching construction
(Proposition~\ref{prop:construction}). Irregular primes are known to
affect the arithmetic of the group ring itself: for regular $p$, every
unit of the integral group ring of an elementary abelian $p$-group is a
product of units coming from cyclic
subgroups~\cite{HoechsmannSehgal1986}, whereas for irregular $p$ and
rank at least two there is always a shortfall, described by a finite
abelian $p$-group~\cite{Hoechsmann2002}.
Theorem~\ref{thm:intro-tail} shows that irregular pairs also control the
values of integer group determinants.

For the critical valuations, the main new point is that the simultaneous
conditions on all character rays reduce to small affine systems: once
the odd logarithmic coordinates are fixed by the cofactor ideals, every
remaining coefficient depends affinely on the free even coefficients.
Arithmetic restrictions on the cofactors at critical valuations also
occur for cyclic groups of prime-power order: for the cyclic groups of
orders $25$ and $27$, the values at the critical valuations are
characterized by prime factors of special
types~\cite{MossinghoffPinnerCirculants2023}. Similar restrictions occur
for $C_3^3$~\cite[Theorem~5.1]{MossinghoffPinner2026}. Ray classes,
finite allocation of prime ideals and conductor arguments are standard
tools in this area: the distribution of norms of ideals in ray classes
and its use for circulant determinants go back to \citet{Odoni1989} and
\citet{Trafford1992}; see also~\cite{Panraksa2026Order24}. For regular
$p$ the structure of the ray class group modulo $p\Z[\zeta_p]$ is
known~\cite{Hoelscher2010}. Our contribution here is to identify
explicitly the ray classes that occur for determinants and to derive the
density and the size bound for prime cofactors. (Here `ray class' has
its usual meaning from class field theory, which is unrelated to the
character rays.)

In Section~\ref{sec:rays} we recall the character factorization and
prove the ray compatibility criterion, and in Section~\ref{sec:units} we
describe, with proofs, the image of the global units modulo $p$,
completely when $p\nmid h_p^+$. These two ingredients give Theorem~\ref{thm:intro-tail} and its corollaries in
Section~\ref{sec:tail}. Sections~\ref{sec:systems}--\ref{sec:prime-cofactors}
treat the critical valuations at regular primes, and
Section~\ref{sec:examples} contains examples, numerical checks and
remarks on the scope of the results.

\section{Character rays and their integral compatibility}\label{sec:rays}

Throughout the paper $p\ge5$ is prime, and we identify $G=C_p\times C_p$
with $\F^2$; from Section~\ref{sec:tail} onward we assume that $p\ge7$. Put
$\zeta=\exp(2\pi\mathrm i/p)$, $K=\Q(\zeta)$, $R=\Z[\zeta]$,
$\varpi=1-\zeta$ and $\p=\varpi R$, and write $\Nm$ for the norm from $K$
to $\Q$. For $v=(v_{ij})_{(i,j)\in G}\in\Z^G$ let
$s=\sum_{i,j}v_{ij}$ be its augmentation, and define
\[
 f_b=\sum_{i,j}v_{ij}\zeta^{i+bj}\quad(b\in\F),\qquad
 f_\infty=\sum_{i,j}v_{ij}\zeta^j.
\]
Every nontrivial character of $G$ is a Galois conjugate of exactly one
of the $p+1$ characters $(i,j)\mapsto\zeta^{i+bj}$ ($b\in\F$) and
$(i,j)\mapsto\zeta^j$, whose values at $v$ are $f_b$ and $f_\infty$.
Thus the nontrivial characters fall into $p+1$ Galois orbits of size
$p-1$, indexed by the \emph{directions}
$\delta\in\mathbf P^1(\F)=\F\cup\{\infty\}$. Fourier diagonalization therefore
gives
\begin{equation}\label{eq:factor}
 \Theta(v)=s\prod_{\delta\in\mathbf P^1(\F)}\Nm(f_\delta).
\end{equation}
Every nonzero norm in~\eqref{eq:factor} is positive, since the complex
embeddings of $K$ occur in conjugate pairs. As $p$ is totally ramified in $K$, we have
$(p)=\p^{p-1}$ and $\Nm(\p)=p$; we normalize the valuation $v_\p$ by
$v_\p(\varpi)=1$. If $\Theta(v)\ne0$, then
\begin{equation}\label{eq:valuation}
 v_p(\Theta(v))=v_p(s)+\sum_{\delta\in\mathbf P^1(\F)}v_\p(f_\delta).
\end{equation}
These are the standard facts about cyclotomic fields of prime conductor;
see \citet[Proposition~6.2 and Example~8.18]{MilneANT}. Composing
$v$ with a linear automorphism of $G$ permutes the rows and the columns
of the group matrix in the same way, so it preserves $\Theta$; it
permutes the $p+1$ directions and replaces the character values by
Galois conjugates. Multiplying $v$ by a
group element (a \emph{translation}) multiplies each $f_\delta$ by a root of
unity and preserves $\Theta$, because the product of all character
values of a group element of odd order is one. Finally,
$\Theta(-v)=-\Theta(v)$ because $|G|=p^2$ is odd, so $S(G)=-S(G)$.

The first-order behavior of the character values is controlled by the
linear form
\[
 L(\alpha,\beta)=\sum_{i,j}v_{ij}(\alpha i+\beta j)\bmod p
 \qquad((\alpha,\beta)\in\F^2),
\]
which we call the \emph{first moment} of $v$.

\begin{lemma}[Valuation profiles]\label{lem:profiles}
Let $v\in\Z^G$ with $\Theta(v)\ne0$ and $p\mid\Theta(v)$. Then $p\mid s$
and every $f_\delta$ lies in $\p$. If $L=0$, then
$v_p(\Theta(v))\ge2p+3$. If $L\ne0$, then after a linear change of
coordinates $v_\p(f_b)=1$ for every $b\in\F$, the exceptional valuation
$e=v_\p(f_\infty)$ is at least two, and
\begin{equation}\label{eq:profile-valuation}
 v_p(\Theta(v))=v_p(s)+p+e.
\end{equation}
In particular, a determinant of exact valuation $p+3$ has profile
$(v_p(s);e)=(1;2)$, and a determinant of exact valuation $p+4$ has
profile $(2;2)$ or $(1;3)$.
\end{lemma}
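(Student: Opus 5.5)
We work throughout with the reductions of the character values modulo $\p$. Since $\zeta\equiv1\pmod{\p}$, we have $f_\delta\equiv s\pmod{\p}$ for every direction $\delta$. Suppose $p\mid\Theta(v)\ne0$. By~\eqref{eq:factor}, either $p\mid s$ or $p\mid\Nm(f_\delta)$ for some $\delta$. A rational prime dividing $\Nm(f_\delta)$ with $f_\delta$ integral forces $\p\mid f_\delta$, since $\p$ is the only prime above $p$. Then $s\equiv f_\delta\equiv0\pmod{\p}$, and as $s\in\Z$ this gives $p\mid s$. In either case $p\mid s$, and hence every $f_\delta\in\p$.

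Next we compute the reduction of $f_\delta/\varpi$ modulo $\p$. Write $\zeta^k=(1-\varpi)^k\equiv1-k\varpi\pmod{\p^2}$. Then
\[
 f_b\equiv s-\varpi\sum v_{ij}(i+bj)=s-\varpi L(1,b)\pmod{\p^2}.
\]
Because $p\mid s$ and $v_\p(p)=p-1\ge2$, we get $f_b/\varpi\equiv -L(1,b)\pmod{\p}$. Similarly $f_\infty/\varpi\equiv -L(0,1)$. Hence $v_\p(f_\delta)=1$ exactly when $L$ does not vanish at the point $(1,b)$, respectively $(0,1)$, of $\mathbf P^1(\F)$. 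Since $L$ is a linear form on $\F^2$, a nonzero $L$ vanishes at exactly one point of $\mathbf P^1(\F)$. We apply a linear automorphism of $G$ sending the kernel line of $L$ to the direction $\infty$; this preserves $\Theta$ and permutes the directions, with Galois conjugation not affecting valuations. Then $v_\p(f_b)=1$ for all $b\in\F$ and $e=v_\p(f_\infty)\ge2$. Substituting into~\eqref{eq:valuation} gives~\eqref{eq:profile-valuation}.

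When $L=0$, every $f_\delta$ has valuation at least $2$, so~\eqref{eq:valuation} gives only $v_p\ge1+2(p+1)=2p+3$. This is exactly the claim, since $v_p(s)\ge1$.

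For the final assertions we need $v_p(\Theta)\ge p+3$ when $L\ne0$, that is, $v_p(s)\ge1$ and $e\ge2$, which we already have. Valuation $p+3$ then forces $v_p(s)+e=3$, so the profile is $(1;2)$. Valuation $p+4$ forces $v_p(s)+e=4$, so the profile is $(2;2)$ or $(1;3)$. We need not exclude $e=1$ at valuation $p+2$: this cannot occur because $e\ge2$. The only delicate point is the congruence $f_b\equiv s-\varpi L(1,b)$ modulo $\p^2$, which requires $p\mid s$ so that $s$ lies in $\p^{p-1}\subseteq\p^2$. This holds because $p\ge5$, so $p-1\ge2$.
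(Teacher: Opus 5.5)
Your proof is correct and follows essentially the same route as the paper. The congruence $f_\delta\equiv s\pmod{\p}$ forces $p\mid s$; the expansion $\zeta^k\equiv1-k\varpi\pmod{\p^2}$ with $s\in\p^{p-1}\subseteq\p^2$ identifies $f_\delta/\varpi$ modulo $\p$ with $-L$ at the corresponding point of $\mathbf P^1(\F)$; and moving the kernel line of $L$ to $\infty$ gives the profile, the only cosmetic difference being that you obtain $p\mid s$ directly rather than by the paper's contrapositive.
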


\begin{proof}
Every character value is congruent to $s$ modulo $\p$. If $p\nmid s$,
then all character values and their norms are prime to $p$, and so is
$\Theta(v)$, contrary to the hypothesis. Hence $p\mid s$, and every
$f_\delta$ lies in $\p$. Since
$\zeta^k\equiv1-k\varpi\pmod{\p^2}$ and $s\in\p^{p-1}\subseteq\p^2$, the
value of the character $(i,j)\mapsto\zeta^{\alpha i+\beta j}$ is
congruent to $-\varpi L(\alpha,\beta)$ modulo $\p^2$. If $L=0$, all
$p+1$ values lie in $\p^2$, and~\eqref{eq:valuation} gives valuation at
least $1+2(p+1)$. Otherwise the kernel of $L$ is a single line. A linear
change of coordinates makes it the line $\alpha=0$, which corresponds to
$f_\infty$. Then $L(1,b)\ne0$ for every $b\in\F$, so $v_\p(f_b)=1$,
while $v_\p(f_\infty)\ge2$, and~\eqref{eq:valuation}
gives~\eqref{eq:profile-valuation}. The two critical cases follow
because $v_p(s)\ge1$ and $e\ge2$.
\end{proof}

The \emph{ray vectors} $c_\delta\colon\F\to\Z$ of $v$ are
$c_b(t)=\sum_{i+bj=t}v_{ij}$ for $b\in\F$ and
$c_\infty(t)=\sum_{j=t}v_{ij}$. We call a direction $\delta$, together
with $c_\delta$ and $f_\delta$, a \emph{ray}; in the coordinates of
Lemma~\ref{lem:profiles}, the rays $b\in\F$ and their values $f_b$ are
called \emph{generic}, and the ray $\infty$ and its value $f_\infty$ are
called \emph{exceptional}.
Each ray vector has sum $s$, and its cyclotomic evaluation
$\sum_tc_\delta(t)\zeta^t$ is $f_\delta$. Conversely, an element $f\in R$ with
$f\equiv s\pmod\p$ is the evaluation of exactly one integral vector on
$\F$ with sum $s$: two representatives of $f$ by integer vectors of
length $p$ differ by a constant vector, and their sums differ by a
multiple of $p$.

The next lemma is the case of rank two of the pull-back description of
the integral group ring of an elementary abelian $p$-group due to
\citet[Lemma~4]{Hoechsmann2002}, and~\eqref{eq:integer-recovery} is the
case $|G|=p^2$ of his reconstruction formula~\cite[(2.6)]{Hoechsmann2002}. We include the
short proof in our notation.

\begin{lemma}[Integral reconstruction]\label{lem:reconstruct}
Let $c_\delta\colon\F\to\Z$, $\delta\in\mathbf P^1(\F)$, be integer vectors with
common sum $s$. They are the ray vectors of some $v\in\Z^G$ if and only
if
\begin{equation}\label{eq:reconstruction-congruence}
 c_\infty(j)+\sum_{b\in\F}c_b(i+bj)\equiv s\pmod p
 \qquad\text{for every }(i,j)\in\F^2.
\end{equation}
In that case $v$ is unique, its augmentation is $s$, and
\begin{equation}\label{eq:integer-recovery}
 v_{ij}=\frac{c_\infty(j)+\sum_{b\in\F}c_b(i+bj)-s}{p}.
\end{equation}
\end{lemma}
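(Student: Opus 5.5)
The proof is a direct computation with the incidence structure of lines in $\F^2$: every point lies on exactly one line of each direction. Given $v\in\Z^G$ with ray vectors $c_\delta$, I would first establish the identity
\[
 c_\infty(j)+\sum_{b\in\F}c_b(i+bj)=s+p\,v_{ij}\qquad((i,j)\in\F^2).
\]
The left side is a sum of $v$ over the $p+1$ lines through $(i,j)$, one in each direction. Expanding it, the point $(i,j)$ is counted $p+1$ times. Any other point $(i',j')$ is counted exactly once, because two distinct points lie on a unique common line and that line has a single direction. Here $c_b(i+bj)$ sums $v$ over $\{(i',j'):i'+bj'=i+bj\}$, which is the line of direction $b$ through $(i,j)$. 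Likewise $c_\infty(j)$ sums over the line $\{j'=j\}$. The total is therefore $s+p\,v_{ij}$. This proves necessity of \eqref{eq:reconstruction-congruence} and the recovery formula \eqref{eq:integer-recovery}. Uniqueness follows as well, since any $v$ with these ray vectors is given by that formula.

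For sufficiency, I would take integer vectors $c_\delta$ with common sum $s$ satisfying \eqref{eq:reconstruction-congruence}. The congruence makes $v_{ij}$ in \eqref{eq:integer-recovery} an integer. The remaining work, and the main (though still routine) point, is to check that this $v$ has the prescribed ray vectors $c_\delta$ and augmentation $s$. Fix a direction $\delta$ and a line $\ell$ of that direction, and sum $p\,v_{ij}+s$ over the $p$ points of $\ell$.

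Take first the term coming from direction $\delta$ itself. It is constant on $\ell$, equal to $c_\delta(\ell)$, so it contributes $p\,c_\delta(\ell)$. Now take any other direction $\delta'$. As $(i,j)$ runs over $\ell$, the line of direction $\delta'$ through $(i,j)$ runs over all $p$ lines of direction $\delta'$, each once, because $\ell$ meets each of them in exactly one point. So direction $\delta'$ contributes the full sum of $c_{\delta'}$, which is $s$. Altogether
\[
 \sum_{(i,j)\in\ell}\bigl(p\,v_{ij}+s\bigr)=p\,c_\delta(\ell)+p\,s,
\]
where the $p\,s$ collects the $p$ other directions. The left side equals $p\sum_{\ell}v+ps$, so $\sum_\ell v=c_\delta(\ell)$ as required. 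The same reasoning works uniformly for the finite directions $b$ and for $\infty$, if one parametrizes each line's points explicitly: for direction $b$, the points $(t-bj,j)$ with $j\in\F$; for direction $\infty$, the points $(i,t)$ with $i\in\F$.

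Summing $v$ over all lines of one direction then gives augmentation $\sum_t c_\delta(t)=s$. This completes the equivalence, uniqueness and the recovery formula.
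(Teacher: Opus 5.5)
Your proposal is correct and follows the paper's proof essentially step for step: the same line-counting identity gives necessity, uniqueness and the recovery formula, and the same line-sum computation gives sufficiency. The only small difference is that you get the augmentation by summing the recovered ray vector of one direction, whereas the paper sums the recovery formula over all $p^2$ cells to obtain $((p+1)ps-p^2s)/p=s$; both are immediate.
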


\begin{proof}
For the rays of a given $v$, the numerator of~\eqref{eq:integer-recovery}
before subtracting $s$ counts the cell $(i,j)$ once on each of the
$p+1$ lines through it and every other cell exactly once. It therefore
equals $pv_{ij}+s$, which proves necessity and uniqueness. Conversely,
the congruences make~\eqref{eq:integer-recovery} integral. When we sum this
formula over the line in direction $\delta$ indexed by $t$, the term
$c_\delta$ contributes $pc_\delta(t)$, each of the other $p$ directions
contributes $s$, and the $p$ subtractions of $s$ cancel these
contributions; hence the reconstructed vector has the prescribed rays.
Summing over all $p^2$ cells gives the augmentation
$((p+1)ps-p^2s)/p=s$.
\end{proof}

The congruences~\eqref{eq:reconstruction-congruence} involve the ray
vectors modulo $p$, and these are determined by the character values
modulo $p\p=\p^p$, one power of $\p$ beyond their residues modulo
$pR$. For $1\le e<p$ and $z\in R$,
let $\varrho_e(z)$ denote the integral ray vector with sum $p$ and
evaluation $\varpi^ez$. Evaluation identifies the lattice of integer vectors of sum
zero with $\p$, and a vector of sum zero is divisible by $p$ exactly when
its evaluation lies in $p\p=\p^p$. Consequently, writing $\overline c$ for the reduction modulo $p$ of the
entries of an integer vector $c$, we have
\begin{equation}\label{eq:window}
 \overline{\varrho_e(z)}=\overline{\varrho_e(z')}
 \quad\Longleftrightarrow\quad z\equiv z'\pmod{\p^{p-e}}.
\end{equation}

We use the logarithmic variable $T=\log\zeta$. The substitution
$x\mapsto\exp T$, with the exponential truncated below degree $p$,
identifies $R/pR=\F[x]/(\Phi_p(x))$, where $x$ is the image of $\zeta$,
with $\F[T]/(T^{p-1})$, because
$\exp T-1$ is $T$ times a unit and $\Phi_p(x)\equiv(x-1)^{p-1}$ modulo
$p$. All logarithms and exponentials below are truncated in the same way;
the denominators that occur are prime to $p$. For $1\le e<p$ and $z\in R$
with residue $z(T)\in\F[T]/(T^{p-1})$ we put
\begin{equation}\label{eq:H}
 H(T)=(1-\exp T)^ez(T)\pmod{T^p},\qquad
 \exp(tT)=\sum_{n=0}^{p-1}\frac{t^nT^n}{n!}.
\end{equation}
Since $(1-\exp T)^e$ has $T$-adic order $e$, the series $H$ depends only
on $z$ modulo $\p^{p-e}$.
We call $H$ the \emph{transform} of the character value $\varpi^ez$.
It is defined on the module $\p/p\p$ and not on all of $R$: the
substitution $x\mapsto\exp T$ does not induce a ring homomorphism from
$R$ to $\F[T]/(T^p)$, because $\Phi_p(\exp T)=T^{p-1}$ in that ring.

\begin{lemma}[Ray transform]\label{lem:transform}
Let $1\le e<p$ and $z\in R$, and let $H$ be the transform of $\varpi^ez$.
Then
\begin{equation}\label{eq:transform}
 \sum_{t\in\F}\overline{\varrho_e(z)}(t)\exp(tT)=T^{p-1}+H(T).
\end{equation}
If $\overline{\varrho_e(z)}(t)=\sum_{d=0}^{p-1}\gamma_dt^d$ as a polynomial
function on $\F$ and $h_n=[T^n]H$ denotes the coefficient of $T^n$ in
$H$, then
\begin{equation}\label{eq:gamma}
 \gamma_{p-1}=0,\qquad
 \gamma_{p-1-n}=-n!\,h_n\ (1\le n\le p-2),\qquad
 \gamma_0=1+h_{p-1}.
\end{equation}
\end{lemma}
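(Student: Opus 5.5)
The plan is to compare both sides of \eqref{eq:transform} inside the ring $\F[x]/(x^p-1)$, transported to $\F[T]/(T^p)$ by $x\mapsto\exp T$. Then \eqref{eq:gamma} follows by reading off coefficients using power sums over $\F$.

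\emph{Step 1: the substitution is a ring map modulo $x^p-1$.} Write $E(T)=\sum_{n<p}T^n/n!$. In characteristic $p$, Frobenius gives $E(T)^p=\sum_{n<p}T^{pn}/(n!)^p\equiv1\pmod{T^p}$. Hence $x\mapsto E(T)$ induces a ring homomorphism $\F[x]/(x^p-1)\to\F[T]/(T^p)$. This is the point where working modulo $x^p-1=(x-1)^p$, rather than modulo $\Phi_p$, matters. Since $E(T)-1=T\cdot(\text{unit})$, the map is in fact an isomorphism, but we only need that it is a homomorphism. The left side of \eqref{eq:transform} is then the image of $C(x)=\sum_t c(t)x^t$, where $c=\varrho_e(z)$.

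\emph{Step 2: decompose $C$.} Choose $g\in\Z[x]$ with $g(\zeta)=z$. Then $C(x)-(1-x)^eg(x)$ vanishes at $\zeta$, so it equals $\Phi_p(x)q(x)$ with $q\in\Z[x]$. Evaluating at $x=1$ gives $p=p\,q(1)$, because $e\ge1$, and therefore $q(1)=1$. Modulo $p$ we have $\Phi_p\equiv(x-1)^{p-1}$. Hence $\Phi_pq\equiv q(1)(x-1)^{p-1}=(x-1)^{p-1}$ modulo $(x-1)^p$, and its image is $(E-1)^{p-1}\equiv T^{p-1}\pmod{T^p}$. The other term maps to $(1-E)^eg(E)$. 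Since $g(E)\equiv z(T)\pmod{T^{p-1}}$ under the identification $R/pR\cong\F[T]/(T^{p-1})$, and $(1-E)^e$ has order $e\ge1$, this image is exactly $H$ modulo $T^p$. Together these give \eqref{eq:transform}.

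\emph{Step 3: coefficients.} Write $\bar c(t)=\sum_d\gamma_dt^d$. The coefficient of $T^n$ on the left of \eqref{eq:transform} is $\frac1{n!}\sum_d\gamma_d\sum_{t\in\F}t^{d+n}$. The power sum $\sum_{t\in\F}t^k$ equals $-1$ when $k>0$ and $(p-1)\mid k$, and equals $0$ otherwise, including $k=0$.
\begin{itemize}
\item For $1\le n\le p-2$, only $d=p-1-n$ contributes. This gives $-\gamma_{p-1-n}/n!=h_n$.
\item For $n=0$, the coefficient is $\sum_t c(t)=p\equiv0$. It also equals $-\gamma_{p-1}$, and $h_0=0$ because $H$ has order $e\ge1$; so $\gamma_{p-1}=0$.
\item For $n=p-1$, both $d=0$ and $d=p-1$ contribute, giving $-(\gamma_0+\gamma_{p-1})/(p-1)!=1+h_{p-1}$. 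Wilson's theorem and $\gamma_{p-1}=0$ then give $\gamma_0=1+h_{p-1}$.
\end{itemize}

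The only delicate point is Step 1–2: one must not substitute into elements of $R$ directly, since that is not a ring map. The fix is to lift to $\Z[x]$, reduce modulo $x^p-1$, and isolate the $\Phi_pq$ term. That term contributes exactly $T^{p-1}$ because $q(1)=1$, which in turn follows from the sum of $c$ being $p$.
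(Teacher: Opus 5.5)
Your proof is correct and follows the paper's own route. The paper likewise splits the ray vector into a sum-zero part, coming from a polynomial in $\varpi$ without constant term, whose image is $H$, plus the all-ones vector, whose generating polynomial is your $\Phi_p$ and whose image $\sum_t\exp(tT)$ it evaluates to $T^{p-1}$ by power sums and Wilson's theorem. It then reads off \eqref{eq:gamma} exactly as in your Step 3, and your version adds two details the paper leaves implicit: that $x\mapsto\exp T$ is a ring map on $\F[x]/(x^p-1)$, and that the exact identity $C=(1-x)^eg+\Phi_pq$ with $q(1)=1$ fixes the correction term without appealing to \eqref{eq:window}.
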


\begin{proof}
Represent $\varpi^ez$ modulo $p\p$ by a polynomial in $\varpi$ without
constant term. Expanding this polynomial in powers of $\zeta$ gives a
vector $(a_t)_{t\in\F}$ of sum zero with $\sum_ta_t\exp(tT)=H(T)$. Adding the all-ones vector makes the sum equal to $p$ and
adds $\sum_t\exp(tT)=T^{p-1}$, because the power sums $\sum_tt^n$ vanish
in $\F$ for $0\le n\le p-2$ and $(p-1)!=-1$ in $\F$. This
proves~\eqref{eq:transform}. Comparing constant coefficients gives
$\gamma_{p-1}=0$. For $1\le n\le p-2$, the coefficient of $T^n$ on the
left of~\eqref{eq:transform} is $\sum_{t,d}\gamma_dt^{d+n}/n!$; it
involves only the nonvanishing power sum with $d+n=p-1$ and equals
$-\gamma_{p-1-n}/n!$; at $n=p-1$ the
remaining term is $-\gamma_0$ divided by $(p-1)!=-1$. These are the
formulas~\eqref{eq:gamma}.
\end{proof}

For a direction $\delta$ we write $H_\delta$ for the transform of $f_\delta$,
defined by~\eqref{eq:H} with $e=v_\p(f_\delta)$ when $v_\p(f_\delta)<p$;
if $v_\p(f_\delta)\ge p$, then $f_\delta\in p\p$ and we put $H_\delta=0$.
We write $h_n(b)=[T^n]H_b$ and $h_{\infty,n}=[T^n]H_\infty$.
When $p\mid s$, the ray vector $c_b$ differs from the integral ray
vector with sum $p$ and evaluation $f_b$ by the constant vector with
entries $s/p-1$. Hence Lemma~\ref{lem:transform} gives
\[
 n!\,h_n(b)\equiv\sum_{i,j}v_{ij}(i+bj)^n\pmod p\quad(1\le n\le p-2),
 \qquad h_{p-1}(b)\equiv c_b(0)-s/p\pmod p,
\]
and similarly for $\delta=\infty$; if $v_\p(f_b)\ge p$, all these
quantities vanish. Thus, up to the factors $n!$, the coefficients
$h_1(b),\dots,h_{p-2}(b)$ are moments of $v$ modulo $p$, but $h_{p-1}(b)$
is not.
Functions of $b\in\F$ are always represented by their unique polynomials
of degree at most $p-1$, and products of such functions are reduced
modulo $b^p-b$.

\begin{lemma}[Moment compatibility]\label{lem:moments}
Suppose that $p\mid s$, and put $\tau=s/p\bmod p$. Integer ray vectors
$c_\delta$ of common sum $s$ with evaluations $f_\delta\in\p$ satisfy the
reconstruction congruences~\eqref{eq:reconstruction-congruence} if and
only if
\begin{equation}\label{eq:degree}
 \deg h_n\le n\quad\text{and}\quad h_{\infty,n}=[b^n]h_n
 \qquad(1\le n\le p-2),
\end{equation}
and
\begin{equation}\label{eq:constant}
 \sum_{b\in\F}h_{p-1}(b)+h_{\infty,p-1}=-\tau.
\end{equation}
\end{lemma}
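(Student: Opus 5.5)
The plan is to turn the congruences \eqref{eq:reconstruction-congruence} into a single identity of polynomial functions on $\F^2$ and then compare coefficients. Since $p\mid s$, the right-hand side of \eqref{eq:reconstruction-congruence} is $0$ modulo $p$. So the condition says that the function
\[
F(i,j)=\overline{c_\infty}(j)+\sum_{b\in\F}\overline{c_b}(i+bj)
\]
vanishes identically on $\F^2$. A function on $\F^2$ is uniquely a polynomial with monomials $i^aj^k$, $0\le a,k\le p-1$. Hence the congruences hold if and only if every coefficient of $F$ vanishes.

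First I express each ray vector modulo $p$ through its transform. As noted before the lemma, $c_b$ differs from $\varrho_{e}$ (the integral ray vector with sum $p$ and evaluation $f_b$) by the constant vector with entries $\tau-1$. Write $\overline{c_b}(t)=\sum_d\gamma_d(b)t^d$, and similarly $\gamma_{\infty,d}$ for $c_\infty$. Lemma~\ref{lem:transform} then gives:
\begin{itemize}
\item $\gamma_{p-1}=0$;
\item $\gamma_{p-1-n}(b)=-n!\,h_n(b)$ for $1\le n\le p-2$;
\item $\gamma_0(b)=\tau+h_{p-1}(b)$, where the constant shift $\tau-1$ combines with the $1$ in \eqref{eq:gamma}.
\end{itemize}
The same formulas hold for $\infty$. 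The rays with $v_\p(f_\delta)\ge p$, where $H_\delta=0$, are covered by the same formulas, because their reductions are constant.

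Next I expand $\overline{c_b}(i+bj)=\sum_d\gamma_d(b)\sum_k\binom dk i^{d-k}b^kj^k$ and sum over $b$ using the power sums in $\F$. For a function $g$ of degree at most $p-1$:
\begin{itemize}
\item $\sum_b g(b)=-[b^{p-1}]g$;
\item $\sum_b g(b)b^k=-[b^{p-1-k}]g$ for $1\le k\le p-2$;
\item for $k=p-1$ there is an extra term $-[b^{p-1}]g$, but it occurs only with $d=p-1$, where $\gamma_{p-1}=0$.
\end{itemize}
Thus the coefficient of $i^{d-k}j^k$ in $F$ is $-\binom dk[b^{p-1-k}]\gamma_d$ when $k<d$. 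When $k=d\ge1$ it is $\gamma_{\infty,d}-[b^{p-1-d}]\gamma_d$, and for $d=0$ it is $\gamma_{\infty,0}-[b^{p-1}]\gamma_0$.

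Now put $n=p-1-d$, with $1\le n\le p-2$ corresponding to $1\le d\le p-2$.
\begin{itemize}
\item The binomial coefficients are units, so vanishing for all $k<d$ says $[b^m]h_n=0$ for every $m=p-1-k>n$. This is $\deg h_n\le n$.
\item Vanishing at $k=d$ says $n!\,h_{\infty,n}=n!\,[b^n]h_n$, the second half of \eqref{eq:degree}.
\item The $d=0$ coefficient gives $\tau+h_{\infty,p-1}-[b^{p-1}]h_{p-1}=0$. Using $\sum_bh_{p-1}(b)=-[b^{p-1}]h_{p-1}$ and $[b^{p-1}]\tau=0$, this is exactly \eqref{eq:constant}.
\end{itemize}
Every step is reversible, which gives the equivalence.

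The main obstacle is bookkeeping rather than any idea. I must track the constant shift $\tau-1$ and the $1$ in $\gamma_0$, and make sure the power-sum edge cases ($k=0$ and $k=p-1$, and the $0^0$ convention at $b=0$) contribute nothing spurious. I would check these cases separately, after settling the generic range $1\le k\le p-2$.
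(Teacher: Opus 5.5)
Your proposal is correct and follows the paper's proof. You expand the reconstruction sum as a polynomial function on $\F^2$, use the power sums $\sum_b b^k\gamma_d(b)=-[b^{p-1-k}]\gamma_d$ to turn the vanishing coefficients in degrees $1\le d\le p-2$ into \eqref{eq:degree}, and read off \eqref{eq:constant} from the constant term; the only cosmetic difference is that you absorb the shift $\tau-1$ into $\gamma_0$ from the start, whereas the paper first treats rays of sum $p$ and then adds the contribution $(p+1)(s/p-1)\equiv\tau-1$.
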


\begin{proof}
First let all rays have sum $p$, and use~\eqref{eq:gamma}. Regard
the reconstruction sum in~\eqref{eq:reconstruction-congruence} as a
polynomial function of $(X,Y)=(i,j)$. For $1\le d\le p-2$, its
homogeneous part of degree $d$ is
\[
 \sum_{b\in\F}\gamma_d(b)(X+bY)^d+\gamma_{\infty,d}Y^d.
\]
Its coefficients vanish exactly when $\sum_bb^k\gamma_d(b)=0$ for
$0\le k<d$ and $\gamma_{\infty,d}=-\sum_bb^d\gamma_d(b)$, because the
binomial coefficients involved are nonzero modulo $p$. Taking power sums
for $k=0,1,\dots,d-1$ successively shows that the first conditions say
that $\gamma_d(b)$ has degree at most $p-1-d$, and the last condition
identifies $\gamma_{\infty,d}$ with the coefficient of $b^{p-1-d}$. With
$n=p-1-d$ and after cancelling the common factor $-n!$, these
are the conditions~\eqref{eq:degree}. There is no ray term of degree
$p-1$, since $\gamma_{p-1}=0$, and polynomial functions on $\F^2$ of degree at most $p-1$ in each
variable are determined by their values, so this accounts for every
nonconstant term.

Changing the sum of every ray from $p$ to $s$ adds $s/p-1$ to each
entry. At sum $p$ the $p+1$ constant terms contribute
$1+\sum_bh_{p-1}(b)+h_{\infty,p-1}$ modulo $p$, and the adjustments
contribute $(p+1)(s/p-1)\equiv\tau-1$. Since $s\equiv0\pmod p$, the
remaining congruence is~\eqref{eq:constant}.
\end{proof}

We call~\eqref{eq:degree} the \emph{nonconstant moment conditions}
and~\eqref{eq:constant} the \emph{constant moment condition}; together
they are the \emph{moment conditions}.

A translation by $(c,d)\in G$ multiplies $f_b$ by $\zeta^{c+bd}$ and
$f_\infty$ by $\zeta^d$. It shifts the rays cyclically and therefore
multiplies $H_b$ by $\exp((c+bd)T)$ and $H_\infty$ by $\exp(dT)$. In
particular it adds $(c+bd)h_1(b)$ to $h_2(b)$ and leaves $h_1$
unchanged.

\section{Units, Bernoulli numbers and ideal defects}\label{sec:units}

For $y\in R$ prime to $\p$ we write $y(T)\in\F[T]/(T^{p-1})$ for its
residue and define its \emph{logarithmic coordinates} $c_j(y)\in\F$ by
\begin{equation}\label{eq:defects}
 \log\frac{y(T)}{y(0)}=\sum_{j=1}^{p-2}c_j(y)T^j.
\end{equation}
These coordinates are additive under multiplication. For
$t\in\F^\times$, let $\sigma_t$ be the automorphism of $K$ with
$\sigma_t(\zeta)=\zeta^t$; then $c_j(\sigma_ty)=t^jc_j(y)$, because
$\sigma_t$ replaces $T$ by $tT$. The
residue ring $R/pR=\F[T]/(T^{p-1})$ has unit group
$\F^\times\times(1+T\F[T]/(T^{p-1}))$; the second factor is an
elementary abelian $p$-group, and the truncated logarithm is a
Galois-equivariant isomorphism from it onto the additive group
$T\F[T]/(T^{p-1})$. Let $\Ehat$ denote the image of the unit group
$R^\times$ in $(R/pR)^\times$.

The Bernoulli numbers enter through the series $E(T)=(\exp T-1)/T$. Put
$M=(p-3)/2$. Integrating the identity
\[
 \frac{\mathrm d}{\mathrm dT}\log E(T)=\frac12+\sum_{k\ge2}\frac{B_kT^{k-1}}{k!}
\]
and reducing modulo $p$, we obtain
\begin{equation}\label{eq:uniformizer}
 \log E(T)=\frac T2+\sum_{i=1}^{M}\kappa_iT^{2i}\pmod{T^{p-1}},
 \qquad \kappa_i=\frac{B_{2i}}{2i\,(2i)!}\bmod p.
\end{equation}
The denominators are prime to $p$ in this range: by the theorem of von
Staudt and Clausen, $p$ divides the denominator of $B_k$ only when
$p-1\mid k$. Thus $\kappa_i=0$ exactly when $(p,2i)$ is an irregular
pair. We write
\[
 \Irr=\{i:1\le i\le M,\ p\mid B_{2i}\}
\]
for the set of \emph{irregular half-indices}, so that $(p,2i)$ is an
irregular pair if and only if $i\in\Irr$. We also put
\[
 \IrrE=\{i:1\le i\le M,\ c_{2i}(u)=0\text{ for every }u\in R^\times\},
\]
the set of half-indices at which the coordinate $c_{2i}$ of every global
unit vanishes.

The following lemma is classical in substance. Part~(ii) is Kummer's
computation of the logarithmic derivatives of the circular units, and
parts~(iii)--(v) are closely related to the discussion of the units of
$\Q(\zeta)$ and Vandiver's conjecture
in~\cite[Section~8.3]{Washington1997}. In terms of the filtration of
the global units by congruences modulo powers of $\p$, parts~(i)
and~(v) correspond to \citet[arXiv version, Lemmas~3.3
and~3.4]{Hoelscher2010}, the latter under the hypothesis $p\nmid h_p^+$.
We give a proof in the logarithmic coordinates used throughout the
paper.

\begin{lemma}[Unit residues at an arbitrary prime]\label{lem:units}
Let $p\ge5$ be prime.
\begin{enumerate}
\item[\textup{(i)}] Every $u\in R^\times$ satisfies $c_j(u)=0$ for each
odd $j$ with $3\le j\le p-2$.
\item[\textup{(ii)}] For $1\le a\le p-1$, the circular unit
$u_a=(1-\zeta^a)/(1-\zeta)$ satisfies
\begin{equation}\label{eq:unit-expansion}
 \log\frac{u_a(T)}a=\frac{a-1}{2}\,T+
 \sum_{i=1}^{M}\kappa_i(a^{2i}-1)T^{2i}.
\end{equation}
\item[\textup{(iii)}] The image in $(R/pR)^\times$ of the group $C$
generated by $\zeta$ and the $u_a$ is
\[
 \widehat C=\F^\times\cdot\exp\Bigl(\F T\oplus
 \bigoplus_{i\notin\Irr}\F T^{2i}\Bigr).
\]
\item[\textup{(iv)}] $\IrrE\subseteq\Irr$ and
\[
 \Ehat=\F^\times\cdot\exp\Bigl(\F T\oplus
 \bigoplus_{i\notin\IrrE}\F T^{2i}\Bigr).
\]
\item[\textup{(v)}] If $p\nmid h_p^+$, then $\Ehat=\widehat C$ and
$\IrrE=\Irr$.
\end{enumerate}
In particular, if $p$ is regular, then
$\Ehat=\F^\times\cdot\exp(\spanop_{\F}\{T,T^2,T^4,\ldots,T^{p-3}\})$; two
residues prime to $\p$ then differ by the residue of a global unit if and
only if their coordinates $c_3,c_5,\ldots,c_{p-2}$ agree.
\end{lemma}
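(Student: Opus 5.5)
The plan is to prove the five parts in order, working throughout in the logarithmic coordinates $c_j$ and using Galois equivariance. For (i), complex conjugation $\sigma_{-1}$ multiplies $c_j$ by $(-1)^j$. By Kummer's lemma every $u\in R^\times$ has the form $u=\zeta^k\varepsilon$ with $\varepsilon$ real. A real unit satisfies $c_j(\varepsilon)=-c_j(\varepsilon)$ for odd $j$, so $c_j(\varepsilon)=0$ there. The root of unity $\zeta^k$ has residue $\exp(kT)$ with logarithm $kT$, so it contributes only to $c_1$. Hence $c_j(u)=0$ for odd $j\ge3$.

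For (ii), I would write $u_a=(1-\zeta^a)/(1-\zeta)$ and substitute $\zeta=\exp T$. The numerator becomes $-aT\,E(aT)$ and the denominator $-T\,E(T)$, so $u_a(T)=a\,E(aT)/E(T)$. Applying \eqref{eq:uniformizer} to both $E(aT)$ and $E(T)$ gives $\log(u_a(T)/a)=(a-1)T/2+\sum\kappa_i(a^{2i}-1)T^{2i}$. The identity is first valid as power series over $\Z_{(p)}$, because the degrees stay below $p-1$ and the denominators are prime to $p$; reducing it then gives \eqref{eq:unit-expansion}. One point needs care: the identity $u_a(T)=aE(aT)/E(T)$ must hold in $\F[T]/(T^{p-1})$. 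Since $R/pR\cong\F[T]/(T^{p-1})$ is a ring isomorphism, and $1-\zeta^a$ and $1-\zeta$ map to $T$ times units, division by $T$ on both sides has to be justified. I would handle this by working modulo $T^{p}$ with the truncation, or by noting that $u_a(1-\zeta)=1-\zeta^a$ determines $u_a$ modulo $\p^{p-2}$. The latter alone does not reach $T^{p-2}$. A cleaner route is to write $u_a=1+\zeta+\dots+\zeta^{a-1}$, which maps to $\sum_{k<a}\exp(kT)$ in $\F[T]/(T^{p-1})$. This is a genuine polynomial identity in $\exp T$, and it equals $(\exp(aT)-1)/(\exp T-1)$ as series over $\Z_{(p)}$.

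For (iii), I would use that $\zeta$ contributes $\F T$, that $\F^\times$ comes from the constants $a$, and that the vectors $(a^{2i}-1)_{a}$ are handled by a Vandermonde argument. Consider the span over $\F$ of the logarithms of $u_a/a$ for $2\le a\le p-1$, modulo $\F T$. Its projection to $\bigoplus_i\F T^{2i}$ is the image of the map $a\mapsto(\kappa_i(a^{2i}-1))_i$. The functions $a^{2i}-1$ for $1\le i\le M$ are linearly independent on $\F^\times$, since the characters $a\mapsto a^{2i}$ are distinct and nontrivial. So the span of the vectors $(a^{2i}-1)_i$ is all of $\F^M$, and multiplying coordinate $i$ by $\kappa_i$ gives exactly $\bigoplus_{i\notin\Irr}\F T^{2i}$. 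The $p$-power structure is automatic, because $1+T\F[T]$ is elementary abelian and integer combinations give the full $\F$-span.

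For (iv), I would combine (i) with $C\subseteq R^\times$: this gives $\widehat C\subseteq\Ehat\subseteq\F^\times\exp(\F T\oplus\bigoplus_i\F T^{2i})$. The image $\Ehat$ is a subgroup stable under all $\sigma_t$, and $\sigma_t$ acts on $T^{2i}$ by the character $t^{2i}$. These characters are distinct for $1\le i\le M$, and together with $t$ they are distinct from one another. By the eigenspace decomposition of $\F[\F^\times]$-modules, the logarithmic image is therefore a direct sum of coordinate lines. It contains a line exactly when some unit has a nonzero coordinate there, so the lines present are exactly those with $i\notin\IrrE$, giving the stated form of $\Ehat$. The inclusion $\IrrE\subseteq\Irr$ follows because $\widehat C\subseteq\Ehat$.

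Part (v) is the main obstacle. Suppose $i\in\Irr$ and some unit has $c_{2i}\neq0$; we must derive a contradiction with $p\nmid h_p^+$. I would use $[E^+:C^+]=h_p^+$: if this index is prime to $p$, then every real unit $\varepsilon$ has $\varepsilon^m\in C^+$ for some $m$ prime to $p$. Then $m\,c_{2i}(\varepsilon)=c_{2i}(\text{circular})=0$, so $c_{2i}(\varepsilon)=0$. Combined with Kummer's lemma, this gives $\Ehat=\widehat C$. Two checks remain. First, the index formula applies to the circular units generated by $-1$, $\zeta$ and the $u_a$ after passing to the real parts $\zeta^{(1-a)/2}u_a$. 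Second, these real parts have the same $c_{2i}$ coordinates as the $u_a$, since multiplying by a power of $\zeta$ changes only $c_1$. The final regular-prime statement then follows from (v), or directly from (iv) with $\Irr=\varnothing$, combined with (i).
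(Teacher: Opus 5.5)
Your proposal is correct and follows the paper's proof essentially step by step: Kummer's lemma and the parity of complex conjugation for (i), the identity $u_a(T)=aE(aT)/E(T)$ for (ii), the decomposition into $\sigma_t$-eigenlines for (iv), and the index formula $[E^+:C^+]=h_p^+$ for (v). The differences are minor. In (iii) you use linear independence of the characters $a\mapsto a^{2i}$ on all of $\F^\times$, whereas the paper inverts the explicit matrix $(a^{2i}-1)_{1\le i\le M,\,2\le a\le M+1}$ and so also shows that $u_2,\dots,u_{M+1}$ already suffice. Your derivation of (ii) from $u_a=1+\zeta+\cdots+\zeta^{a-1}$ makes explicit why the coefficient of $T^{p-2}$ is correct, a point the paper leaves implicit.
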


\begin{proof}
(i) Every unit of $R$ is a power of $\zeta$ times a real unit; see
\citet[Lemma~2]{ConradKummer}. Complex conjugation replaces $T$ by $-T$,
so the normalized logarithm of a real unit is even, and $\zeta$
contributes only the linear term $T$.

(ii) We have $u_a(T)=(\exp(aT)-1)/(\exp T-1)=aE(aT)/E(T)$, so the
expansion~\eqref{eq:unit-expansion} follows from the
series~\eqref{eq:uniformizer} applied to $E(aT)$ and $E(T)$.

(iii) Since $u_a\equiv a\pmod\p$ and $1+T\F[T]/(T^{p-1})$ has exponent
$p$, the residue of $u_a^p$ is the constant $a$; hence
$\F^\times\subseteq\widehat C$. By~(ii), the logarithms of the residues
of $\zeta$ and $u_a$ span $\F T$ together with the vectors
$\sum_i\kappa_i(a^{2i}-1)T^{2i}$, so the logarithmic part of
$\widehat C$ is contained in the displayed space, the coordinates with
$i\in\Irr$ being zero. For the reverse inclusion consider the square
matrix $(a^{2i}-1)_{1\le i\le M,\ 2\le a\le M+1}$. It is invertible over
$\F$: a vanishing linear combination $\sum_i\alpha_i(a^{2i}-1)$ of its
rows gives a polynomial $\sum_{i=1}^{M}\alpha_i(X^i-1)$ of degree at
most $M$ with the $M+1$ roots $1,2^2,\ldots,(M+1)^2$, which are distinct because
$M+1=(p-1)/2$; hence every $\alpha_i$ is zero. Deleting the rows with $i\in\Irr$ and multiplying
each remaining row by $\kappa_i\ne0$ preserves the independence of the
rows. Hence the units $u_2,\ldots,u_{M+1}$ realize every vector of
coordinates $(c_{2i})_{i\notin\Irr}$, and a power of $\zeta$ adjusts
the linear coordinate.

(iv) By~(i), the logarithmic part of $\Ehat$ is contained in
$\F T\oplus\bigoplus_i\F T^{2i}$. It is a subgroup of an $\F$-vector
space, hence a subspace, and it is stable under the automorphisms
$\sigma_t$. Since the characters $t\mapsto t^j$ for $1\le j\le p-2$ are
distinct, the projection
$\pi_j=(p-1)^{-1}\sum_tt^{-j}\sigma_t$ maps $\sum_kx_kT^k$ to $x_jT^j$
and preserves this subspace. Hence the subspace is the direct sum of the
lines $\F T^j$ for which some element has a nonzero coordinate $x_j$.
It contains $T$, and by the definition of $\IrrE$ some unit has a
nonzero coordinate $c_{2i}$ exactly when $i\notin\IrrE$. Together with $\F^\times\subseteq\Ehat$ this gives the
displayed description. By~(iii), every $i\notin\Irr$ lies outside
$\IrrE$.

(v) Let $C^+$ be the group generated by $-1$ and the real circular units
$\zeta^{(1-a)/2}u_a$ with $1<a<p/2$, where $(1-a)/2$ is read modulo $p$;
these generate the circular units of
$\Q(\zeta+\zeta^{-1})$~\cite[Lemma~8.1]{Washington1997}.
The index $[E^+:C^+]$ of $C^+$ in the unit group $E^+$ of
$\Q(\zeta+\zeta^{-1})$ equals $h_p^+$~\cite[Theorem~8.2]{Washington1997}.
As in the proof of~(i), every unit of $R$ is a power of $\zeta$ times a
real unit, so $R^\times=\langle\zeta\rangle E^+$;
since $C$ contains $\langle\zeta\rangle C^+$ (note $-1=\zeta u_{p-1}$),
every $u\in R^\times$ satisfies $u^{h_p^+}\in C$. The $p$-group
$1+T\F[T]/(T^{p-1})$ has exponent $p$, and $p\nmid h_p^+$, so raising to
the power $h_p^+$ is bijective on it. Hence the image of $u$ in this
$p$-group lies in the image of $C$, and since $\widehat C$ also contains
the constants $\F^\times$, we obtain $\Ehat=\widehat C$. Comparing~(iii) and~(iv) gives
$\IrrE=\Irr$.

For a regular prime, $\Irr=\varnothing$, so (iv) gives the last
assertion.
\end{proof}

Hart, Harvey and Ong verified Vandiver's conjecture $p\nmid h_p^+$ for
every prime below $2^{31}$~\cite{HartHarveyOng2017}; for
$71\le p\le151$, and in particular for $p=103$, even $h_p^+=1$~\cite{Miller2015}. Thus, by Lemma~\ref{lem:units}(v), $\IrrE=\Irr$ for every prime below
$2^{31}$.

The factor $\varpi^e$ in a character value changes only the linear and
even coordinates. Indeed, $1-\exp T=-TE(T)$, and by~\eqref{eq:uniformizer}
the series $\log E(T)$ has no odd terms of degree at least three. We
use this observation in the following form. Let $f=\varpi^ez$ with $z$ prime
to $\p$, and let $H$ be its transform. Then $H(T)=(-1)^eT^eE(T)^ez(T)$,
and therefore
\begin{equation}\label{eq:relative-log}
 \log\frac{H(T)}{h_eT^e}=e\log E(T)+\log\frac{z(T)}{z(0)}
 \pmod{T^{p-e}},\qquad h_e=(-1)^ez(0)\ne0.
\end{equation}
We call $H/(h_eT^e)$ the \emph{relative series} of $f$, and, as for
elements, we call the coefficients of the logarithm of a series with
constant coefficient one its \emph{logarithmic coordinates}.
By~\eqref{eq:relative-log}, for odd $j$ with $3\le j\le p-1-e$ the
logarithmic coordinate of degree $j$ of the relative series is $c_j(z)$,
and for $2\le2i\le p-1-e$ its coordinate of degree $2i$ is
$e\kappa_i+c_{2i}(z)$.

Suppose now that $p$ is regular. For a fixed $y$ prime to $\p$, the
transforms of $\varpi^eyu$ as $u$ ranges over $R^\times$ are, by
Lemma~\ref{lem:units} and~\eqref{eq:window}, exactly the series
\begin{equation}\label{eq:local-law}
 H(T)=\lambda T^e\exp\Biggl(
 A_1T+\sum_{\substack{2\le j\le p-1-e\\j\text{ even}}}A_jT^j+
 \sum_{\substack{3\le j\le p-1-e\\j\text{ odd}}}c_j(y)T^j
 \Biggr)\pmod{T^p},
\end{equation}
where $\lambda\in\F^\times$ and the $A_j\in\F$ are arbitrary. We call
this description the \emph{local law}. The logarithmic coordinates of
$yu$ of degree at least $p-e$ do not affect the transform; we call them
\emph{invisible} and the others \emph{visible}. That every such
transform has the form~\eqref{eq:local-law} holds at every prime, by
Lemma~\ref{lem:units}(i) and~\eqref{eq:relative-log}. Conversely, given
a series of the form~\eqref{eq:local-law}, choose a residue with
constant term $(-1)^e\lambda$ and the corresponding visible
coordinates, whose invisible odd coordinates equal those of $y$, the
invisible even ones being arbitrary; by the last
assertion of Lemma~\ref{lem:units}, it is the residue of $yu$ for some
$u\in R^\times$, so every such series occurs.

\begin{lemma}[Defects of ideals]\label{lem:ideals}
Suppose that $p$ is regular and let $h=|\Cl(R)|$. For an integral ideal
$I$ prime to $\p$, choose a generator $y$ of $I^h$ and set
\begin{equation}\label{eq:ideal-defects}
 d_j(I)=h^{-1}c_j(y)\in\F\qquad(j=3,5,\ldots,p-2).
\end{equation}
We call the $d_j(I)$ the \emph{defects} of $I$. They are well defined,
additive and Galois equivariant, and they equal $c_j(z)$ when $I=zR$. For a prime ideal $\mathfrak q$ above a
rational prime $\ell\ne p$,
\begin{equation}\label{eq:frobenius}
 (\ell^j-1)\,d_j(\mathfrak q)=0\qquad(j=3,5,\ldots,p-2).
\end{equation}
\end{lemma}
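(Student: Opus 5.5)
The plan rests on the last assertion of Lemma~\ref{lem:units}: for regular $p$, the odd coordinates $c_j$ with $3\le j\le p-2$ vanish on every global unit. Combined with additivity of the $c_j$, this gives well-definedness directly.

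\emph{Well-definedness and the principal case.} Two generators $y,y'$ of $I^h$ differ by a unit $u$. Since $c_j$ is additive, $c_j(y')=c_j(y)+c_j(u)=c_j(y)$ for odd $j\ge3$. The factor $h^{-1}$ makes sense in $\F$ because $p\nmid h$ when $p$ is regular. If $I=zR$, then $z^h$ generates $I^h$, so $d_j(I)=h^{-1}c_j(z^h)=c_j(z)$.

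\emph{Additivity.} If $y_1,y_2$ generate $I_1^h,I_2^h$, then $y_1y_2$ generates $(I_1I_2)^h$, and additivity of $c_j$ gives $d_j(I_1I_2)=d_j(I_1)+d_j(I_2)$.

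\emph{Galois equivariance.} $\sigma_t y$ generates $(\sigma_tI)^h$, and $c_j(\sigma_ty)=t^jc_j(y)$, so $d_j(\sigma_tI)=t^jd_j(I)$.

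\emph{Frobenius relation \eqref{eq:frobenius}.} Let $\mathfrak q$ lie above $\ell\ne p$. The decomposition group of $\mathfrak q$ contains the Frobenius $\sigma_\ell$, because $\ell$ is unramified and $\sigma_\ell$ generates the decomposition group. Hence $\sigma_\ell\mathfrak q=\mathfrak q$, and equivariance gives
\[
 \ell^jd_j(\mathfrak q)=d_j(\sigma_\ell\mathfrak q)=d_j(\mathfrak q),
\]
which is \eqref{eq:frobenius}. In $\F$, the number $\ell^j$ is read as $\ell\bmod p$ to the power $j$, and this agrees with $t^j$ for $t=\ell\bmod p$.

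The only point needing care is that $\sigma_\ell$ fixes $\mathfrak q$. I would justify it directly from $\sigma_\ell(x)\equiv x^\ell\pmod{\ell R}$ for $x\in R$, so that $\sigma_\ell(\mathfrak q)\subseteq\mathfrak q$. Since both are prime ideals of the same norm, equality follows.
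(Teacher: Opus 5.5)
Your proposal is correct and follows the paper's proof essentially step for step. Well-definedness comes from the vanishing of the odd unit coordinates; the paper cites Lemma~\ref{lem:units}(i), which holds at every prime, rather than the regular-prime assertion, but either suffices. Additivity and equivariance come from multiplying or conjugating generators, the principal case from $y=z^h$, and the Frobenius relation from $\sigma_\ell\mathfrak q=\mathfrak q$. Your direct check that $\sigma_\ell(x)\equiv x^\ell\pmod{\ell R}$ forces $\sigma_\ell(\mathfrak q)\subseteq\mathfrak q$ fills in the one step the paper only asserts. You could also state, as the paper does, that every generator of $I^h$ is prime to $\p$, so that $c_j(y)$ is defined.
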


\begin{proof}
The class number $h$ annihilates the class group, so $I^h$ is principal,
and every generator of $I^h$ is prime to $\p$. Regularity makes $h$ invertible
modulo $p$. Two generators differ by a unit, whose odd coordinates
vanish by Lemma~\ref{lem:units}(i). Multiplying generators of $I^h$ and
$J^h$ proves additivity, and applying $\sigma_t$ to a generator proves
equivariance. If $I=zR$, we may choose $y=z^h$. Finally, the Frobenius
automorphism $\sigma_\ell$ fixes $\mathfrak q$, so equivariance gives
$\ell^jd_j(\mathfrak q)=d_j(\mathfrak q)$.
\end{proof}

Vanishing odd defects do not imply principality, so the ideal classes
must be retained alongside the defects. In the allocation of
Section~\ref{sec:allocation}, every ray ideal is required, separately,
to be principal before the local law~\eqref{eq:local-law} is applied.
The definition~\eqref{eq:ideal-defects} assigns defects also to
nonprincipal ideals, through a generator of a principal power.

\section{Unit cofactors and the least full tail}\label{sec:tail}

From now on we assume that $p\ge7$. In this section we prove
Theorem~\ref{thm:intro-tail} for every such prime. Apart
from~\cite[Theorem~2.3]{MossinghoffPinner2026}, used in one case of the
proof of Theorem~\ref{thm:tail}(a), and the computations
of~\cite{HartHarveyOng2017}, the proofs use only Sections~\ref{sec:rays} and~\ref{sec:units};
regularity plays no role,
and irregularity enters only through the sets $\Irr$ and $\IrrE$. For a
determinant $\pm p^Nm$ with $p\nmid m$ we call $m$ its \emph{cofactor},
and for a character value $f=\varpi^ez$ with $z$ prime to $\p$ we call
$z$ its \emph{cofactor} and $zR$ its \emph{cofactor ideal}. We first
show that norm-free cofactors force unit cofactors.

\begin{lemma}[Unit cofactors]\label{lem:unit-cofactors}
Let $v\in\Z^G$ with $\Theta(v)=\pm p^Nm$, where $1\le N\le2p+2$ and $m$
is norm-free. Then each of the character values $f_\delta$,
$\delta\in\mathbf P^1(\F)$, has the form
$f_\delta=\varpi^{e_\delta}z_\delta$ with $z_\delta\in R^\times$. After a linear change of
coordinates, $e_b=1$ for every $b\in\F$, the exceptional valuation
$e=e_\infty$ is at least two, and $N=v_p(s)+p+e$.
\end{lemma}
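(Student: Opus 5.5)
The plan is to combine the character factorization~\eqref{eq:factor} with the definition of norm-free integers to show that no prime ideal other than $\p$ can divide any character value. The valuation count of Lemma~\ref{lem:profiles} then supplies the profile.

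First, $\Theta(v)=\pm p^Nm\ne0$. Here $m\ne0$ because it is a norm-free integer prime to $p$, so in particular $m$ is nonzero. Hence $s\ne0$ and every $f_\delta\ne0$. Since $N\ge1$, $p$ divides $\Theta(v)$, and Lemma~\ref{lem:profiles} gives $f_\delta\in\p$ for every $\delta$.

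Next, for each direction $\delta$, factor the principal ideal as $f_\delta R=\p^{e_\delta}J_\delta$ with $J_\delta$ an integral ideal prime to $\p$. Then $\Nm(f_\delta)=p^{e_\delta}\Nm(J_\delta)$. By~\eqref{eq:factor} the integer $\Nm(f_\delta)$ divides $\Theta(v)$, all factors being integers. Since $\Nm(J_\delta)$ is prime to $p$, it divides the $p$-free part $m$. If $J_\delta\ne R$, pick a prime ideal $\mathfrak q\mid J_\delta$. Then $\Nm(\mathfrak q)\mid\Nm(J_\delta)\mid m$, which contradicts the norm-freeness of $m$. Therefore $J_\delta=R$, so $f_\delta R=\p^{e_\delta}=\varpi^{e_\delta}R$, and $z_\delta=f_\delta/\varpi^{e_\delta}$ is a unit of $R$.

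For the profile we use Lemma~\ref{lem:profiles}. If the first moment $L$ vanished, we would get $v_p(\Theta(v))\ge2p+3>N$, which is impossible. So $L\ne0$. After the linear change of coordinates of that lemma, $v_\p(f_b)=1$ for all $b\in\F$, $e=v_\p(f_\infty)\ge2$, and~\eqref{eq:profile-valuation} gives $N=v_p(s)+p+e$.

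It remains to check that the unit-cofactor form survives the change of coordinates. By Section~\ref{sec:rays}, a linear automorphism of $G$ preserves $\Theta$, permutes the directions, and replaces each character value by a Galois conjugate. For $\sigma_t$ we have $\sigma_t(\varpi)=\varpi u_t$ with $u_t$ a circular unit. Hence $\sigma_t(\varpi^{e}z)=\varpi^{e}u_t^{e}\sigma_t(z)$, which again has a unit cofactor. Alternatively, one can simply rerun the ideal argument after changing coordinates, since the hypotheses are invariant.

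The only delicate point is the divisibility $\Nm(J_\delta)\mid m$. It uses that all factors in~\eqref{eq:factor} are integers, so that a single norm divides the whole product. No step here seems to be a real obstacle.
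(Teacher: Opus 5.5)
Your proof is correct and follows the paper's own argument: factor $f_\delta R=\p^{e_\delta}J_\delta$, use the factorization~\eqref{eq:factor} and the norm-freeness of $m$ to force $J_\delta=R$, and then invoke Lemma~\ref{lem:profiles} with $N\le 2p+2<2p+3$ to exclude $L=0$. Your extra check that the unit-cofactor form survives the linear change of coordinates, via $\sigma_t(\varpi)=\varpi u_t$, makes explicit a point the paper leaves implicit.
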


\begin{proof}
Write $f_\delta R=\p^{e_\delta}I_\delta$ with $I_\delta$ prime to $\p$. By the
factorization~\eqref{eq:factor}, the part of $|\Theta(v)|$ prime to $p$ equals $|s'|\prod_\delta\Nm(I_\delta)$, where
$s'$ is the part of $s$ prime to $p$. If some $I_\delta$ were a proper ideal,
the norm of one of its prime factors would divide $m$, contrary to the
hypothesis. Hence every $I_\delta$ equals $R$. Since $N<2p+3$,
Lemma~\ref{lem:profiles} excludes $L=0$ and gives the remaining
assertions.
\end{proof}

From here to the end of the proof of Lemma~\ref{lem:obstruction}, $v$
is as in Lemma~\ref{lem:unit-cofactors}, and its coordinates are chosen
as there. We first determine the coefficients of degrees one and two.
Since $e\ge2$, we have $h_{\infty,1}=0$, and the moment
conditions~\eqref{eq:degree} with $n=1$ show that $h_1(b)=\lambda$ is a
constant, which is
nonzero because $e_b=1$. They also show that $h_2(b)=\lambda q(b)$ for a
polynomial $q$ of degree at most two with $[b^2]q=h_{\infty,2}/\lambda$. For $b\in\F$ we call
\[
 G_b(T)=\frac{H_b(T)}{\lambda T}=1+q(b)T+\cdots\pmod{T^{p-1}}
\]
the \emph{generic relative series}. By~\eqref{eq:relative-log} with
$e=1$ and Lemma~\ref{lem:units}(i),
\begin{equation}\label{eq:generic-log}
 \log G_b(T)=\log E(T)+\log\frac{z_b(T)}{z_b(0)}
 \quad\text{has no odd terms of degree }3\le j\le p-2,
\end{equation}
and its coefficient of $T^{2i}$ equals $\kappa_i+c_{2i}(z_b)$.

\begin{lemma}[Exceptional valuation two]\label{lem:val-two}
The exceptional valuation $e$ is not equal to $2$.
\end{lemma}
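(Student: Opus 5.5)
The plan is to derive a contradiction from $e=2$ using only the generic relative series $G_b$ and the degree half of the moment conditions~\eqref{eq:degree}. The exceptional ray enters only to force the quadratic coefficient $[b^2]q$ to be nonzero.

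\emph{Step 1 (normalizing $q$).} If $e=2$, the exceptional transform has $T$-adic order exactly $2$, so $h_{\infty,2}\ne0$. The moment condition with $n=2$ then gives $[b^2]q=h_{\infty,2}/\lambda\ne0$, so $q$ has exact degree two. A translation by $(c,d)$ adds $(c+bd)h_1(b)=\lambda(c+bd)$ to $h_2(b)$, leaves $h_1$ unchanged, and preserves $\Theta$. We choose $(c,d)$ to kill the linear and constant parts of $q$, so that $q(b)=ab^2$ with $a\ne0$ in $\F$.

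\emph{Step 2 (shape of $\log G_b$).} By~\eqref{eq:generic-log}, for each $b$ we can write
\[
\log G_b(T)=ab^2T+g_2(b)T^2+g_4(b)T^4+\cdots \pmod{T^{p-1}},
\]
with no odd terms of degree $3\le j\le p-2$. Here each $g_{2i}$ is a function on $\F$, which we represent by a polynomial of degree at most $p-1$. Exponentiating gives
\[
[T^2]G_b=\tfrac12a^2b^4+g_2(b),\qquad [T^3]G_b=\tfrac16a^3b^6+ab^2g_2(b).
\]
The second formula uses that the log has no $T^3$ term, since $3\le p-2$.

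\emph{Step 3 (degree count).} Since $h_n(b)=\lambda[T^{n-1}]G_b$, condition~\eqref{eq:degree} requires $\deg[T^2]G_b\le3$ and $\deg[T^3]G_b\le4$; both are legitimate because $n=3,4\le p-2$ for $p\ge7$. The first condition forces $g_2(b)=-\tfrac12a^2b^4+r(b)$ with $\deg r\le3$. Substituting into the second gives
\[
[T^3]G_b=\bigl(\tfrac16-\tfrac12\bigr)a^3b^6+ab^2r(b)=-\tfrac13a^3b^6+ab^2r(b).
\]
Here $\deg(ab^2r)\le5$. Because $6<p$, no reduction modulo $b^p-b$ occurs, so the polynomial has exact degree $6>4$ as $a\ne0$ and $3\nmid p$. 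This contradicts~\eqref{eq:degree}, so $e\ne2$.

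The only delicate point is bookkeeping. The degree bounds are for polynomial representatives of degree less than $p$, and all degrees appearing here ($\le6$) stay below $p$ when $p\ge7$, so products need no reduction. The translation normalization is merely a convenience; without it, the leading $b^6$ coefficient is still $-a^3/3$ with $a=[b^2]q$, since lower-order terms of $q$ contribute only lower degrees.
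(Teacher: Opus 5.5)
Your proof is correct and follows the paper's argument: after a translation makes $q=ab^2$, the vanishing of the $T^3$ logarithmic coordinate of $G_b$ forces $[T^3]G_b=ab^2\,[T^2]G_b-\tfrac13a^3b^6$, which has exact degree six and so contradicts the moment bound $\deg h_4\le4$. The only difference is cosmetic: you parametrize by the logarithmic coefficient and exponentiate, whereas the paper expands $\log G_b$ directly in the raw coefficients $g_2,g_3$.
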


\begin{proof}
Suppose that $e=2$. Then $h_{\infty,2}\ne0$, so $a=[b^2]q=h_{\infty,2}/\lambda$
is nonzero. A translation
removes the affine part of $q$ (see the end of Section~\ref{sec:rays}),
so we may assume $q(b)=ab^2$. Write $g_k(b)=h_{k+1}(b)/\lambda$ for the
coefficients of $G_b$. The coefficient of $T^3$ in $\log G_b$ is
$g_3-qg_2+q^3/3$, and it vanishes by~\eqref{eq:generic-log}. Hence
\[
 g_3(b)=ab^2g_2(b)-\frac{a^3b^6}{3}.
\]
The moment conditions~\eqref{eq:degree} give $\deg g_2\le3$ and
$\deg g_3\le4$. The first term on the right has degree at most five,
while the second has degree six; since $6\le p-1$, no reduction modulo
$b^p-b$ occurs. The right side therefore has degree six, which
contradicts $\deg g_3\le4$.
\end{proof}

When $e\ge3$, we have $h_{\infty,2}=0$, so $q$ is affine and a
translation makes $q=0$. We assume this normalization in the
next two lemmas.

\begin{lemma}[Parity]\label{lem:parity}
Suppose that $e\ge3$ and $q=0$. Then $h_n(b)=0$ for every $b\in\F$ and
every even $n\le p-1$, and $h_{\infty,n}=0$ for every even $n\le p-2$.
Consequently $e$ is odd or $e\ge p-1$. If moreover $e\le p-4$, then
$p^2\mid s$.
\end{lemma}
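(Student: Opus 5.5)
The plan is to read off all the even-degree vanishing from the shape of the generic relative series $G_b$. With $e\ge3$ and $q=0$, the logarithm $\log G_b$ has linear coefficient equal to the $T$-coefficient of $G_b$, which is $q(b)=0$. By~\eqref{eq:generic-log}, $\log G_b$ also has no odd terms of degree $3\le j\le p-2$. So modulo $T^{p-1}$, $\log G_b$ is an even series, and hence so is $G_b=\exp(\log G_b)$. Thus $[T^k]G_b=0$ for every odd $k\le p-2$. Since $[T^k]G_b=h_{k+1}(b)/\lambda$, this gives $h_n(b)=0$ for every even $n$ with $2\le n\le p-1$, and for every $b\in\F$.

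Next, the nonconstant moment conditions~\eqref{eq:degree} give $h_{\infty,n}=[b^n]h_n=0$ for every even $n\le p-2$. If $e<p-1$ were even, then $e\le p-3$ and $h_{\infty,e}=(-1)^ez_\infty(0)\ne0$ by~\eqref{eq:relative-log}, a contradiction. Hence $e$ is odd or $e\ge p-1$.

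For the last assertion I would use the constant moment condition~\eqref{eq:constant}. Since $p-1$ is even, $h_{p-1}(b)=0$ for all $b$, so the condition reduces to $h_{\infty,p-1}=-\tau$. It then suffices to show $h_{\infty,p-1}=0$ when $e\le p-4$; in that case $e$ is odd by the previous step. To do this, I would show that the exceptional relative series $H_\infty/(h_{\infty,e}T^e)$ is even modulo $T^{p-e}$:
\begin{itemize}
\item Its linear coefficient is $h_{\infty,e+1}/h_{\infty,e}$, and $e+1$ is even with $e+1\le p-3$, so this coefficient is $0$. Hence its linear logarithmic coordinate vanishes.
\item By~\eqref{eq:relative-log}, its odd logarithmic coordinates of degree $3\le j\le p-1-e$ equal $c_j(z_\infty)$. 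These vanish by Lemma~\ref{lem:units}(i), because $z_\infty$ is a unit by Lemma~\ref{lem:unit-cofactors}.
\end{itemize}
So the relative series is the exponential of an even series and is therefore even modulo $T^{p-e}$. Multiplying by $T^e$ with $e$ odd, $H_\infty$ has only odd-degree terms through degree $p-1$. In particular $h_{\infty,p-1}=0$, so $\tau=0$ and $p^2\mid s$.

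The only delicate point is this bookkeeping in the exceptional ray: checking that the window $p-e$ is large enough for the degree-$(p-1)$ coefficient of $H_\infty$ to be controlled by visible coordinates. Here $p-1-e\ge3$ since $e\le p-4$. The parity argument itself is routine.
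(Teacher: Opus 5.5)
Your proof is correct and follows the paper's argument: $G_b$ is even because $q=0$ and the odd coordinates vanish, the moment conditions then kill the even exceptional coefficients, and the constant moment condition gives $p^2\mid s$. The only difference is in the last step. You show the whole exceptional relative series is even, using $h_{\infty,e+1}=0$ for the linear coordinate and all the odd coordinates $c_j(z_\infty)$. The paper instead uses all the vanishing even $h_{\infty,n}$ to reduce to the single top odd coordinate, of degree $p-1-e$. Both versions are valid.
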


\begin{proof}
The linear coefficient of $\log G_b$ is $q(b)=0$, and its odd
coefficients of degree $3$ to $p-2$ vanish by~\eqref{eq:generic-log}.
Thus $\log G_b$ is even modulo $T^{p-1}$, and so is $G_b$. Hence
$h_n(b)=\lambda[T^{n-1}]G_b=0$ for even $n\le p-1$. The moment
conditions then give $h_{\infty,n}=[b^n]h_n=0$ for even $n\le p-2$. If
$e\le p-2$, the transform $H_\infty=(1-\exp T)^ez_\infty$ has $T$-adic
order exactly $e$, so $h_{\infty,e}\ne0$ and $e$ is odd.

Now let $e\le p-4$, so that $e$ is odd. Consider the relative series
$J=H_\infty/(h_{\infty,e}T^e)$ modulo $T^{p-e}$. Its coefficient of
$T^k$ is $h_{\infty,e+k}/h_{\infty,e}$. For odd $k\le p-2-e$, the index
$e+k$ is even and at most $p-2$, so this coefficient vanishes. Hence
$J=J_0+\mu T^{p-1-e}$, where $J_0$ is even and
$\mu=h_{\infty,p-1}/h_{\infty,e}$; here $p-1-e$ is odd because $e$ is odd. Since
$p-1-e$ is the highest retained degree, the coefficient of
$T^{p-1-e}$ in $\log J$ equals $\mu$. On the other hand, this
coefficient is an odd coordinate of degree $p-1-e\ge3$, so it vanishes
by~\eqref{eq:relative-log} and Lemma~\ref{lem:units}(i). Therefore
$h_{\infty,p-1}=0$. Also $h_{p-1}(b)=\lambda[T^{p-2}]G_b=0$ because
$G_b$ is even. The constant moment condition~\eqref{eq:constant} now gives
$s/p\equiv0\pmod p$.
\end{proof}

Lemma~\ref{lem:parity} reduces everything to the even coefficients of
$G_b$.
For $1\le k\le M$ put $\beta_k(b)=[T^{2k}]G_b=h_{2k+1}(b)/\lambda$, so
that
\begin{equation}\label{eq:even-series}
 G_b(T)=1+\sum_{k=1}^{M}\beta_k(b)T^{2k}\pmod{T^{p-1}}.
\end{equation}
If $e=2r+1\le p-2$, the moment conditions say that
$\deg\beta_k\le2k+1$ and $[b^{2k+1}]\beta_k=h_{\infty,2k+1}/\lambda$.
This coefficient vanishes for $k<r$ and is nonzero for $k=r$. Thus
\begin{equation}\label{eq:beta-degrees}
 \deg\beta_k\le2k\ (k<r),\qquad
 \deg\beta_r=2r+1,\qquad
 \deg\beta_k\le2k+1\ (k>r).
\end{equation}
The next lemma is the heart of the matter: it shows that the degree
pattern~\eqref{eq:beta-degrees} is incompatible with the vanishing of
the unit coordinates at a half-index $i\in\IrrE$ with $r\mid i$ and
$(2r+1)i\le r(p-1)$.

\begin{lemma}[Bernoulli obstruction]\label{lem:obstruction}
Suppose that $e=2r+1\le p-2$ and $q=0$. Then every $i\in\IrrE$ with
$r\mid i$ satisfies $(2r+1)i>r(p-1)$.
\end{lemma}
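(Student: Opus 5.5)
Suppose for contradiction that $i\in\IrrE$, $r\mid i$, and $(2r+1)i\le r(p-1)$. Write $i=rk$, so $(2r+1)k\le p-1$. We will compute the $T^{2i}$-coordinate of $\log G_b$ as a polynomial in $b$ and show it must vanish identically on $\F$ while having exact degree $(2r+1)k<p$, which is impossible.

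By~\eqref{eq:generic-log}, this coordinate equals $\kappa_i+c_{2i}(z_b)$. Since $z_b\in R^\times$ and $i\in\IrrE$, we have $c_{2i}(z_b)=0$. Since $\IrrE\subseteq\Irr$ (Lemma~\ref{lem:units}(iv)), also $\kappa_i=0$. Hence
\[
 [T^{2i}]\log G_b=0\qquad\text{for every }b\in\F .
\]

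Now expand $\log G_b$ using~\eqref{eq:even-series}:
\[
 \log G_b=\sum_{m\ge1}\frac{(-1)^{m-1}}{m}\Bigl(\sum_k\beta_k(b)T^{2k}\Bigr)^m .
\]
The coefficient of $T^{2i}$ is a sum of monomials $\beta_{k_1}\cdots\beta_{k_m}$ with $k_1+\dots+k_m=i$. The factor $1/m$ is harmless, since $m\le i\le M<p$. Assign to $\beta_k$ the weight $\deg\beta_k$ and bound it using~\eqref{eq:beta-degrees}:
\begin{itemize}
\item if $k<r$, then $\deg\beta_k\le 2k$, so $\deg\beta_k\le 2k+k/r$ with strict inequality;
\item if $k=r$, then $\deg\beta_r=2r+1=2k+k/r$ exactly;
\item if $k>r$, then $\deg\beta_k\le 2k+1<2k+k/r$.
\end{itemize}
Summing over a monomial gives degree at most $2i+i/r=(2r+1)k$, with equality only if every $k_j=r$. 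That forces $m=k$, so the unique top term is $\frac{(-1)^{k-1}}{k}\beta_r^{k}$, of exact degree $(2r+1)k$. Since $(2r+1)k\le p-1$, no reduction modulo $b^p-b$ affects this top degree. The remaining monomials have degree strictly below $(2r+1)k$, and reducing them modulo $b^p-b$ cannot raise their degree. So as a function on $\F$, $[T^{2i}]\log G_b$ is represented by a polynomial of exact degree $(2r+1)k\le p-1$.

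That polynomial is nonzero, yet it vanishes at all $p$ points of $\F$. A nonzero polynomial of degree at most $p-1$ cannot do this, which is the desired contradiction.

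The main point needing care is the degree bookkeeping, specifically that the top degree really is achieved uniquely. This needs:
\begin{itemize}
\item the strict inequalities for $k\ne r$, which follow from $k/r<1$ when $k<r$ and $k/r>1$ when $k>r$;
\item the cap $i\le M$, so that truncation modulo $T^{p-1}$ does not interfere with the $T^{2i}$ coefficient;
\item the products of functions being reduced modulo $b^p-b$ only after the degree count, which is legitimate since every degree involved is at most $p-1$.
\end{itemize}
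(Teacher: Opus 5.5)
Your proposal is correct and is essentially the paper's proof. It derives the vanishing of $[T^{2i}]\log G_b$ on all of $\F$ from $\kappa_i=0$ and $c_{2i}(z_b)=0$, then uses a degree count to show that $\beta_r^{i/r}$ is the unique product of top degree $(2r+1)i/r\le p-1$; your weight bound $\deg\beta_k\le 2k+k/r$ (strict for $k\ne r$) is just a repackaging of the paper's count of factors with $k_l\ge r$. The only blemish is notational: you use $k$ both for $i/r$ and for the running index of $\beta_k$, and these should be separated (the paper writes $i=r\nu$).
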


\begin{proof}
Let $i\in\IrrE$. Since $\IrrE\subseteq\Irr$, we have $\kappa_i=0$; and
$c_{2i}(z_b)=0$ because $z_b$ is a unit. By~\eqref{eq:generic-log} the
coefficient of $T^{2i}$ in $\log G_b$ vanishes for every $b\in\F$. In
terms of~\eqref{eq:even-series} this coefficient is
\[
 \Lambda_i(b)=\sum_{j\ge1}\frac{(-1)^{j+1}}{j}
 \sum_{k_1+\cdots+k_j=i}\beta_{k_1}(b)\cdots\beta_{k_j}(b),
\]
where the $k_l$ are positive integers. We regard $\Lambda_i$ as a
polynomial in $b$, formed from the polynomials $\beta_k$ without
reduction modulo $b^p-b$; its values on $\F$ are the coefficients above.

Now suppose that $r\mid i$, say $i=r\nu$, and assume, to the contrary,
that $(2r+1)\nu\le p-1$. By~\eqref{eq:beta-degrees},
a product $\beta_{k_1}\cdots\beta_{k_j}$ with $k_1+\cdots+k_j=i$ has
degree at most $2i+t$, where $t$ is the number of indices with
$k_l\ge r$. Since these indices sum to at most $i$, we have
$t\le i/r=\nu$, with equality only when every $k_l$ equals $r$. Hence
the only product of degree $(2r+1)\nu$ is $\beta_r^\nu$, which occurs
once, in the term $j=\nu$. Therefore the coefficient of
$b^{(2r+1)\nu}$ in $\Lambda_i$ is $(-1)^{\nu+1}a^\nu/\nu$, where
$a=[b^{2r+1}]\beta_r\ne0$; this coefficient is nonzero because
$1\le\nu<p$. A nonzero polynomial of
degree $(2r+1)\nu\le p-1$ cannot vanish at every point of $\F$, so
$\Lambda_i$ is not identically zero on $\F$. This contradiction shows
that $(2r+1)\nu>p-1$, that is, $(2r+1)i>r(p-1)$.
\end{proof}

We say that an integer $r\ge1$ is \emph{admissible for} a subset
$\mathcal I\subseteq\{1,\ldots,M\}$ if $(2r+1)i>r(p-1)$ for every
$i\in\mathcal I$ divisible by $r$, and we let $r(\mathcal I)$ be the
least such $r$. Thus $r_p=r(\Irr)$ in the notation of the introduction.
Since $\IrrE\subseteq\Irr$, we have $r(\IrrE)\le r_p$, with equality
when $p\nmid h_p^+$ by Lemma~\ref{lem:units}(v). The next proposition
shows that the obstruction of Lemma~\ref{lem:obstruction} is the only
one.

\begin{proposition}[Construction]\label{prop:construction}
Let $r\ge1$ be admissible for $\IrrE$, with $2r+1\le p-2$. Then
$p^{p+3+2r}\Z\subseteq S(C_p\times C_p)$. If $r$ is admissible for
$\Irr$, the construction uses only circular units.
\end{proposition}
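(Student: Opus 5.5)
We will construct $v$ through Lemma~\ref{lem:reconstruct} by prescribing its ray vectors, that is, its character values. Put $e=2r+1$. We want generic values $f_b=\varpi z_b$ with $z_b\in R^\times$ and an exceptional value $f_\infty=\varpi^{e}z_\infty$. Their transforms are to be $H_b=\lambda T\,P(b^{e}T^{2r})$ and $H_\infty=\lambda T^{e}$ modulo $T^p$, where $P(X)=1+X+\sum_{k\ge2}\pi_kX^k$ has to be chosen. Expanding, $[T^{2kr+1}]H_b=\lambda\pi_kb^{ke}$. Every other coefficient of $H_b$ vanishes, apart from the leading $\lambda T$. So the nonconstant moment conditions~\eqref{eq:degree} ask for two things. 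First, $\deg(\pi_kb^{ke})\le2kr+1$ after reduction modulo $b^p-b$. Second, $[b^{n}]h_n=h_{\infty,n}$, which equals $\lambda$ at $n=e$ and $0$ elsewhere. At $k=1$ we get degree exactly $e$ with coefficient $\lambda$, as required. For $k\ge2$ we have $ke>2kr+1$, so we need $\pi_k=0$ unless $ke\ge p$. In that case $b^{ke}$ reduces to a power $b^{ke-(p-1)m}$, and we must check that this exponent is at most $2kr+1$ and different from $2kr+1$ (or else the coefficient vanishes). Since $2kr+1\le p-1$ forces $k\le (p-2)/(2r)$, this step is bookkeeping on the exponents, and I expect it to go through by using $e\le p-2$.

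The heart of the argument is the local law: each $H_b$ must be the transform of $\varpi z_b$ for a global unit $z_b$. By~\eqref{eq:relative-log} and Lemma~\ref{lem:units}(iv), the series $G_b=P(b^eT^{2r})$ qualifies exactly when $\log G_b$ has no odd terms of degree at least $3$, which is automatic because $G_b$ is even, and when its $T^{2i}$-coordinate equals $\kappa_i$ for every $i\in\IrrE$. The coordinates of degree at least $p-1$ are invisible. Since $\log G_b=\log P(b^eT^{2r})$, its coordinates can be nonzero only at $T^{2i}$ with $r\mid i$. There, writing $i=r\nu$, the coordinate is $[X^\nu]\log P\cdot b^{e\nu}$. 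Admissibility gives $e\nu>p-1$ for these $i\in\IrrE$.

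So the plan is to choose $\log P(X)=X+\sum_{\nu\ge2}\ell_\nu X^\nu$ so that the coordinates at the obstructed indices match $\kappa_i=0$. For $i\in\IrrE$ with $r\mid i$ we have $\kappa_i=0$, but $b^{e\nu}$ is not the zero function, so we simply set $\ell_\nu=0$ there. At the other indices $\kappa_i$ imposes no condition, because $i\notin\IrrE$ and so those coordinates are free. The tension is that choosing $\ell_\nu$ fixes the $\pi_k$, and the $\pi_k$ must satisfy the degree constraints of the previous paragraph. The resolution I would try is to take $P=\exp\bigl(X+\sum\ell_\nu X^\nu\bigr)$ truncated, and to use the freedom in the $\ell_\nu$ with $e\nu<p$ to kill all $\pi_k$ with $ke<p$. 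Any $\ell_\nu$ needed for that has $i=r\nu$ unobstructed, since obstructed indices have $e\nu>p-1$. Solving recursively for $\ell_\nu$, $\nu=2,3,\dots$, we get $P(X)=1+X+(\text{terms with }ke\ge p)$. I expect this compatibility, the triangular solve combined with the reduction of $b^{ke}$ modulo $b^p-b$, to be the main obstacle, and I would check it directly.

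The exceptional series $T^{e}$ is the transform of $\varpi^ez_\infty$ for a unit $z_\infty$, namely a product of circular units and $\zeta$ chosen to cancel $e\log E$ in the visible range $T^{2i}$, $2i<p-1-e$. For $i\in\IrrE$ in that range, $\kappa_i=0$ already, so no cancellation is needed there. The constant condition~\eqref{eq:constant} is met by adjusting the invisible $T^{p-1}$ terms, using the freedom in $h_{p-1}(b)$, and this fixes $\tau$. A general multiple $p^{p+3+2r}n$ then comes from varying $s$ and the cofactors in the standard way. Alternatively, once a single valuation-$(p+3+2r)$ example exists with suitable $\tau$, we multiply the augmentation by a cofactor, as in~\cite[Theorem~2.3]{MossinghoffPinner2026}. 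When $r$ is admissible for $\Irr$, the same solve works with $\Irr$ in place of $\IrrE$. By Lemma~\ref{lem:units}(iii), all the units then come from $\widehat C$, so circular units suffice.
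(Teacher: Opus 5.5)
Your construction is the paper's own. It uses the same transforms $H_b=T\,P(b^{2r+1}T^{2r})$ and $H_\infty=T^{2r+1}$ (your factor $\lambda$ is harmless). It uses admissibility in the same way, to push the monomials $b^{(2r+1)\nu}$ with $(2r+1)\nu>p-1$ below degree $2r\nu+1$, and it lifts to units through Lemma~\ref{lem:units}(iii)--(iv) in the same way. The bookkeeping you defer is immediate: for $k(2r+1)\ge p$ the reduced exponent is at most $k(2r+1)-(p-1)=2kr+1-(p-k)<2kr+1$. Prescribing $\log P$ is a harmless variant of the paper's choice $P=1+X+\sum_{\nu\in U}c_\nu X^\nu$, where $U=\{\nu:r\nu\in\IrrE\}$ (or $\Irr$) and the $c_\nu$ are chosen so that $[X^\nu]\log P=0$ for $\nu\in U$.

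One step, however, is wrong as written: the coefficients of $T^{p-1}$ are neither invisible nor free. Your relative series $G_b=H_b/(\lambda T)$ is even. Adding $\mu(b)T^{p-1}$ to $H_b$ would therefore give $\log G_b$ the coordinate $\mu(b)/\lambda$ in degree $p-2$. By \eqref{eq:relative-log} this is the odd coordinate $c_{p-2}(z_b)$, which vanishes for every unit by Lemma~\ref{lem:units}(i). Likewise, $h_{\infty,p-1}$ is an odd coordinate of degree $p-2-2r\ge3$ of the exceptional relative series whenever $2r+1\le p-4$; this is the mechanism of Lemma~\ref{lem:parity}. So \eqref{eq:constant} cannot be arranged by adjusting these terms, and with unit cofactors $\tau$ is not at your disposal. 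Since all your transforms are odd, the constant condition simply reads $\tau=0$. The correct conclusion is to take $s=p^2n$, which gives $\Theta(v)=p^2n\cdot p^p\cdot p^{2r+1}=p^{p+3+2r}n$ for every nonzero $n$. This is exactly what the paper does, and the alternative via other results that you mention is not needed.
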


\begin{proof}
Let $\mathcal I$ be $\IrrE$ or $\Irr$, according to the case, and put
$U=\{\nu\ge1:r\nu\in\mathcal I\}$. Admissibility gives $(2r+1)\nu>p-1$ for every
$\nu\in U$; in particular $1\notin U$, because $2r+1\le p-2$.
Every $\nu\in U$ satisfies $\nu\le M<p$. Let
\[
 P(X)=1+X+\sum_{\nu\in U}c_\nu X^\nu\in\F[X],
\]
where the coefficients $c_\nu$ are chosen recursively, in increasing
order of $\nu$, so that $[X^\nu]\log P=0$ for every $\nu\in U$. This is possible because $[X^\nu]\log P$ equals $c_\nu$ plus
an expression in the coefficients of lower degree, and the denominators
in the logarithm are smaller than $p$.

Fix a nonzero integer $n$ and prescribe
\begin{equation}\label{eq:tail-data}
 H_b(T)=T\,P(b^{2r+1}T^{2r})\pmod{T^p}\quad(b\in\F),\qquad
 H_\infty(T)=T^{2r+1},\qquad s=p^2n.
\end{equation}
We first check the moment conditions of Lemma~\ref{lem:moments}. The
only nonzero coefficients are $h_1=1$, $h_{2r+1}(b)=b^{2r+1}$,
$h_{\infty,2r+1}=1$, and $h_{2r\nu+1}(b)=c_\nu b^{(2r+1)\nu}$ for
$\nu\in U$. The first three satisfy~\eqref{eq:degree} because
$2r+1\le p-2$. For $\nu\in U$ we have $(2r+1)\nu>p-1$, so on $\F$ the
monomial $b^{(2r+1)\nu}$ equals $b^{d}$ with
$d\equiv(2r+1)\nu\pmod{p-1}$ and
$1\le d\le(2r+1)\nu-(p-1)=2r\nu+1-(p-\nu)<2r\nu+1$. Hence
$h_{2r\nu+1}$ has degree less than $2r\nu+1$ and its coefficient of
$b^{2r\nu+1}$ is zero, in agreement with $h_{\infty,2r\nu+1}=0$. Every transform
in~\eqref{eq:tail-data} is odd, so all coefficients of degree $p-1$
vanish, and $s/p=pn\equiv0\pmod p$. Thus~\eqref{eq:constant} holds.

Next we lift the data to unit cofactors. For $b\in\F$ the required unit
residue is $w_b=-P(b^{2r+1}T^{2r})/E(T)$ modulo $T^{p-1}$, since
$1-\exp T=-TE(T)$. Its normalized logarithm
$\log(w_b/w_b(0))$ is $\log P(b^{2r+1}T^{2r})-\log E(T)$. It has no odd
terms of degree at least three. For $i\in\mathcal I$ its coefficient of
$T^{2i}$ is $-\kappa_i=0$ if $r\nmid i$, and
$[X^{i/r}]\log P\cdot b^{(2r+1)i/r}-\kappa_i=0$ if $r\mid i$, because
then $i/r\in U$. By Lemma~\ref{lem:units}(iii) or~(iv), there is a unit
$z_b$ (a circular unit when $\mathcal I=\Irr$) with residue $w_b$. At
infinity the transform $T^{2r+1}$ requires a unit residue congruent to
$-E(T)^{-(2r+1)}$ modulo $T^{p-1-2r}$. We take
$w_\infty=-E(T)^{-(2r+1)}$ modulo $T^{p-1}$. Its normalized logarithm is
$-(2r+1)\log E(T)$; it has no odd terms of degree at
least three, and its coefficient of $T^{2i}$ is $-(2r+1)\kappa_i=0$ for
$i\in\mathcal I$. Hence there is a unit $z_\infty$ with residue
$w_\infty$, again circular when $\mathcal I=\Irr$. By the
definition~\eqref{eq:H}, the values $f_b=\varpi z_b$ and
$f_\infty=\varpi^{2r+1}z_\infty$ have the transforms~\eqref{eq:tail-data}.

Let $c_\delta$, $\delta\in\mathbf P^1(\F)$, be the integral rays of sum $s$
with evaluations $f_\delta$. By
Lemma~\ref{lem:moments} they satisfy the reconstruction congruences, so
Lemma~\ref{lem:reconstruct} produces $v\in\Z^G$ with these rays and
augmentation $s$. Units have norm one, $\Nm(\varpi)=p$, and the
factorization~\eqref{eq:factor} gives
\[
 \Theta(v)=s\cdot p^p\cdot p^{2r+1}=p^{p+3+2r}n.
\]
The value $0$ is the determinant of the zero vector.
\end{proof}

For example, for $r=1$ and $p=37,59,67$, the polynomial $P$ is
$1+X+X^{16}/16$, $1+X+X^{22}/22$ and $1+X-X^{29}/29$, respectively, and
Proposition~\ref{prop:construction} gives $p^{p+5}\Z\subseteq S(C_p\times C_p)$
although $p$ is irregular.

\begin{theorem}[Least full tail]\label{thm:tail}
Let $p\ge7$ be prime and put $r^E=r(\IrrE)$.
\begin{enumerate}
\item[\textup{(a)}] $p^{p+3+2r^E}\Z\subseteq S(C_p\times C_p)$.
\item[\textup{(b)}] If $m$ is norm-free and $N$ is an integer with
$p+3\le N\le 2p-2$ and $N<p+3+2r^E$, then $\pm p^Nm\notin S(C_p\times C_p)$.
\item[\textup{(c)}] $r^E\le r_p$, with equality if $p\nmid h_p^+$.
\end{enumerate}
\end{theorem}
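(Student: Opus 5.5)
Theorem~\ref{thm:tail} is assembled from the lemmas of this section; the work is mostly bookkeeping of cases.

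For part~(a), put $r=r^E$. If $2r+1\le p-2$, Proposition~\ref{prop:construction} applied with $r=r^E$, which is admissible for $\IrrE$ by definition, gives $p^{p+3+2r^E}\Z\subseteq S(C_p\times C_p)$ directly. Since $r^E\le(p-1)/2$, the only remaining case is $2r^E+1\ge p-1$, that is, $r^E=(p-1)/2$ (note $2r+1=p-1$ is impossible by parity). Then $p+3+2r^E=2p+2$, and the inclusion is exactly \cite[Theorem~2.3]{MossinghoffPinner2026}. This is the one place where that external result enters, as announced at the start of the section.

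For part~(b), suppose $\Theta(v)=\pm p^Nm$ with $m$ norm-free and $p+3\le N\le2p-2$. By Lemma~\ref{lem:unit-cofactors} every character value is $\varpi^{e_\delta}$ times a unit, and after a change of coordinates $N=v_p(s)+p+e$ with $e\ge2$. Lemma~\ref{lem:val-two} rules out $e=2$, so $e\ge3$. After a translation we may take $q=0$. Since $v_p(s)\ge1$, we have $e\le N-p-1\le p-3$, so Lemma~\ref{lem:parity} shows that $e$ is odd. Write $e=2r+1$. Because $e\le p-4$ (odd and at most $p-3$), the same lemma gives $p^2\mid s$, hence $N\ge p+2+e=p+3+2r$. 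Moreover $e\le p-2$, so Lemma~\ref{lem:obstruction} applies and shows that $r$ is admissible for $\IrrE$. Therefore $r\ge r^E$ and $N\ge p+3+2r^E$, contradicting the hypothesis $N<p+3+2r^E$.

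For part~(c), Lemma~\ref{lem:units}(iv) gives $\IrrE\subseteq\Irr$. Admissibility is a condition imposed on each element of the set, so an integer admissible for $\Irr$ is admissible for $\IrrE$, which gives $r^E\le r(\Irr)=r_p$. If $p\nmid h_p^+$, Lemma~\ref{lem:units}(v) gives $\IrrE=\Irr$, hence equality.

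The main point requiring care is the valuation bookkeeping in~(b): one must check that the bound $N\le2p-2$ keeps $e$ within the ranges required by Lemmas~\ref{lem:parity} and~\ref{lem:obstruction} (namely $e\le p-4$ and $e\le p-2$). One must also note that $p^2\mid s$ raises the lower bound from $p+1+e$ to $p+2+e$, which is what yields $p+3+2r$ and not $p+2+2r$.
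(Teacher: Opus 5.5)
Your proposal is correct and follows the paper's proof essentially step for step: Proposition~\ref{prop:construction} or \cite[Theorem~2.3]{MossinghoffPinner2026} for~(a), the chain of Lemmas~\ref{lem:unit-cofactors}, \ref{lem:val-two}, \ref{lem:parity} and~\ref{lem:obstruction}, with the same valuation bookkeeping, for~(b), and Lemma~\ref{lem:units}(iv)--(v) for~(c). The one step you leave implicit, $r^E\le(p-1)/2$, holds because every $r>(p-3)/2$ divides no half-index in $\{1,\ldots,M\}$ and is therefore admissible, as the paper notes.
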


\begin{proof}
(a) If $r^E\le(p-3)/2$, apply Proposition~\ref{prop:construction} with
$r=r^E$.
Otherwise $r^E=(p-1)/2$, because every $r>(p-3)/2$ is admissible, and
$p+3+2r^E=2p+2$; in this case the inclusion is due to
\citet[Theorem~2.3]{MossinghoffPinner2026}.

(b) Suppose, to the contrary, that $\Theta(v)=\pm p^Nm$ for some
$v\in\Z^G$. Lemma~\ref{lem:unit-cofactors}
applies, and $e=N-v_p(s)-p\le N-p-1\le p-3$. By
Lemma~\ref{lem:val-two}, $e\ge3$. After a translation, Lemma~\ref{lem:parity}
shows that $e$ is odd; hence $e\le p-4$, and the same lemma gives
$v_p(s)\ge2$. Write $e=2r+1$. By
Lemma~\ref{lem:obstruction}, $r$ is admissible for $\IrrE$, so $r\ge
r^E$, and $N=v_p(s)+p+2r+1\ge p+3+2r^E$, a contradiction.

(c) This was noted before Proposition~\ref{prop:construction}, using
Lemma~\ref{lem:units}(v).
\end{proof}

\begin{proof}[Proof of Theorem~\ref{thm:intro-tail}]
Part~(a) follows from Theorem~\ref{thm:tail}(a), since $r^E\le r_p$ and
$p^{p+3+2r_p}\Z\subseteq p^{p+3+2r^E}\Z$. If $p\nmid h_p^+$, then
$r^E=r_p$, and part~(b) is Theorem~\ref{thm:tail}(b). For part~(c),
the hypothesis $r_p\le(p-5)/2$ gives $p+2+2r_p\le2p-3$, so
$\pm p^{p+2+2r_p}\notin S(C_p\times C_p)$ by~(b), while every multiple
of $p^{p+3+2r_p}$ occurs by~(a).

It remains to justify the last two sentences of~(c). Every $r$ with
$(p-3)/4<r\le(p-5)/2$ satisfies $2r>(p-3)/2$, so the only irregular
half-index that $r$ could divide is $r$ itself; hence such an $r$ is
admissible unless $r\in\Irr$. There are at least $(p-7)/4$ integers in
this interval, so $r_p\le(p-5)/2$ whenever $|\Irr|<(p-7)/4$. Hart,
Harvey and Ong verified Vandiver's conjecture for every prime below
$2^{31}$ and found that none of these primes has more than nine
irregular pairs~\cite{HartHarveyOng2017}. A regular prime has
$r_p=1\le(p-5)/2$. Since $9<(p-7)/4$ for $p>43$, and since $37$ is the
only irregular prime below $44$ and has
$r_{37}=1$~\cite{HartHarveyOng2017}, both hypotheses
of~(c) hold for every prime $7\le p<2^{31}$.
\end{proof}

Since $r^E\ge1$, the exponents $N=p+3$ and $N=p+4$ always satisfy
$N<p+3+2r^E$, so Theorem~\ref{thm:tail}(b) applies to them without any
hypothesis on $p$. With $m=1$ it shows that neither $\pm p^{p+3}$ nor
$\pm p^{p+4}$ lies in $S(C_p\times C_p)$.

\begin{proof}[Proof of Corollary~\ref{cor:intro-p5}]
If every irregular pair $(p,2i)$ satisfies $3i>p-1$, then $r_p=1$ and
Theorem~\ref{thm:intro-tail}(a) gives $p^{p+5}\Z\subseteq S(C_p\times C_p)$.
If some
irregular pair has $3i\le p-1$, then $r_p\ge2$, and since $p+5\le2p-2$
for $p\ge7$, Theorem~\ref{thm:intro-tail}(b) gives $p^{p+5}\notin S(C_p\times C_p)$
(for $p=7$ there is no irregular pair). The irregular primes below $103$
are $37$, $59$, $67$ and $101$, with the irregular pairs $(37,32)$,
$(59,44)$, $(67,58)$ and $(101,68)$~\cite{HartHarveyOng2017}, and each
satisfies $3i>p-1$. For
$p=103$ the only irregular pair is $(103,24)$, so $\Irr=\{12\}$. The
integers $r=1,2,3,4$ divide $12$ and satisfy $(2r+1)\cdot12\le102\,r$,
while $r=5$ does not divide $12$. Hence $r_{103}=5$, and
Theorem~\ref{thm:intro-tail}(c) gives $e(103)=116$. The exclusion of
$\pm103^N$ for $106\le N\le115$ is Theorem~\ref{thm:intro-tail}(b) with
$m=1$.
\end{proof}

The same method determines the least $p$-divisible values. We first
record a support restriction at the lowest valuation, valid at every
prime $p\ge7$, regular or not.

\begin{proposition}[Support at valuation $p+3$]\label{prop:support}
Let $p\ge7$, and let $v\in\Z^G$ be such that $\Theta(v)$ has $p$-adic
valuation $p+3$. In
the coordinates of Lemma~\ref{lem:profiles}, at least $p-6$ of the
generic values $f_b$ have a nontrivial cofactor ideal
$f_b\p^{-1}\ne R$. Consequently the part of $\Theta(v)$ prime to $p$ has
absolute value at least $(p+1)^{p-6}$.
\end{proposition}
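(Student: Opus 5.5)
Valuation $p+3$ forces the profile $(v_p(s);e)=(1;2)$ by Lemma~\ref{lem:profiles}. After a translation $q(b)=ab^2$ with $a=h_{\infty,2}/\lambda\ne0$, exactly as in the proof of Lemma~\ref{lem:val-two}. The plan is to rerun the $T^3$ computation of that lemma, but only on the set $B\subseteq\F$ of generic directions whose cofactor $z_b$ is a unit, and to show that $B$ has at most six elements.

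For each $b\in B$, the relative series $G_b$ has $\log G_b$ free of odd terms of degree $3$ up to $p-2$, by~\eqref{eq:generic-log}, because $z_b\in R^\times$ and Lemma~\ref{lem:units}(i) kills its odd coordinates. In the general case, $\log G_b=\log E(T)+\log(z_b(T)/z_b(0))$, and the $T^3$ coordinate equals $c_3(z_b)$, which may be nonzero when $z_b$ is not a unit. This yields the relation
\[
 g_3(b)-ab^2g_2(b)+\frac{a^3b^6}{3}=c_3(z_b)\qquad(b\in\F).
\]
Set $D(b)=g_3(b)-ab^2g_2(b)+a^3b^6/3$, viewed as a polynomial without reduction. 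The moment conditions give $\deg g_2\le3$ and $\deg g_3\le4$, so $D$ has exact degree six, and $6\le p-1$ holds because $p\ge7$. Then $D$ has at most six roots in $\F$, and $D(b)=0$ for every $b\in B$. Hence $|B|\le6$, so at least $p-6$ generic rays have $z_b\notin R^\times$, which is the same as $f_b\p^{-1}\ne R$.

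For the size bound, use~\eqref{eq:factor}: the prime-to-$p$ part of $|\Theta(v)|$ is at least $\prod_b\Nm(f_b\p^{-1})$. Each nontrivial cofactor ideal $I_b$ is prime to $\p$. Its norm is a product of norms of prime ideals not above $p$, and each such norm is congruent to $1$ modulo $p$ and exceeds $1$. So $\Nm(I_b)\ge p+1$, and multiplying over at least $p-6$ directions gives $(p+1)^{p-6}$.

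The main point to check is that the $T^3$ coordinate of $\log G_b$ really equals $c_3(z_b)$ for non-unit cofactors. This follows from~\eqref{eq:relative-log} with $e=1$, since $\log E$ has no $T^3$ term. One must also confirm the truncation: with $e=1$, the coordinate of degree $3\le p-2$ is visible, which holds for $p\ge7$. A second check is that the normalization $q=ab^2$ and the degree bounds on $g_2,g_3$ are independent of which cofactors are units. This is true because they come only from the moment conditions~\eqref{eq:degree}, which hold for every $v$ regardless of the cofactors.
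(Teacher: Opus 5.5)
Your proposal is correct and is essentially the paper's proof. After normalizing $q(b)=ab^2$, the identity $c_3(z_b)=g_3(b)-ab^2g_2(b)+a^3b^6/3$ makes $c_3(z_b)$ a polynomial of exact degree six, so it vanishes, as it must at unit cofactors by Lemma~\ref{lem:units}(i), for at most six $b\in\F$; the bound $\Nm(I_b)\ge p+1$ then finishes. The one point you leave implicit, and the paper states, is that the translation multiplies each $f_\delta$ by a root of unity, so it preserves $\Theta(v)$ and every cofactor ideal.
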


\begin{proof}
By Lemma~\ref{lem:profiles}, the profile is $(1;2)$. The moment
conditions alone show that
$h_1=\lambda$ is a nonzero constant and that $h_2(b)=\lambda q(b)$ with
$[b^2]q=h_{\infty,2}/\lambda\ne0$; after a translation, which
preserves $\Theta(v)$ and the cofactor ideals, $h_2(b)=\lambda ab^2$ with
$a\ne0$. Let $z_b=f_b/\varpi$ and
$D_3(b)=c_3(z_b)$. With $G_b=H_b/(\lambda T)$ and
$g_k=h_{k+1}/\lambda$ as before, the identity~\eqref{eq:relative-log}
shows that the coefficient of $T^3$ in $\log G_b$ is $D_3(b)$, and the
computation in the proof of Lemma~\ref{lem:val-two} gives
\[
 D_3(b)=g_3(b)-ab^2g_2(b)+\frac{a^3b^6}{3}.
\]
The right side is a polynomial of degree exactly six. It therefore
vanishes at no more than six points of $\F$, so $c_3(z_b)\ne0$ for at
least $p-6$ values of $b$. For these $b$ the element $z_b$ is not a
unit, by Lemma~\ref{lem:units}(i). The norm of a proper ideal prime to
$\p$ is a product of numbers $\ell^{\ord_p(\ell)}\equiv1\pmod p$, so it
is at least $p+1$. Multiplying these norms gives the final assertion.
\end{proof}

\begin{corollary}[Least $p$-divisible values]\label{cor:least}
Let $p\ge11$. Every nonzero $p$-divisible $\Theta\in S(C_p\times C_p)$
satisfies $|\Theta|\ge p^{p+5}$. Equality occurs if and only if $1$ is
admissible for $\IrrE$; in particular it occurs whenever every irregular
pair $(p,2i)$ satisfies $3i>p-1$, and, if $p\nmid h_p^+$, only then.
\end{corollary}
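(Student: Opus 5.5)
We must show three things: the lower bound $|\Theta|\ge p^{p+5}$ for every nonzero $p$-divisible determinant, attainment exactly when $1$ is admissible for $\IrrE$, and the translation into irregular pairs. Throughout, let $\Theta=\Theta(v)=\pm p^Nm$ with $p\nmid m$. By Lemma~\ref{lem:profiles}, $N\ge p+3$. If $N\ge p+5$, then $|\Theta|\ge p^{p+5}$, with equality only if $N=p+5$ and $|m|=1$. So the lower bound reduces to showing that $N=p+3$ and $N=p+4$ force $|m|>p^2$ and $|m|>p$ respectively. Both are then strict inequalities, so equality can only come from $\pm p^{p+5}$.

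For $N=p+3$ we use Proposition~\ref{prop:support}, which gives $|m|\ge(p+1)^{p-6}$. For $p\ge11$ this is at least $(p+1)^5>p^2$, so $|\Theta|>p^{p+5}$.

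For $N=p+4$ we apply Theorem~\ref{thm:tail}(b) with $N=p+4$. Since $r^E\ge1$, this case needs no hypothesis on $h_p^+$, and $p+4\le2p-2$. It shows that $m$ is not norm-free. Hence some prime ideal norm $\ell^{\ord_p(\ell)}\equiv1\pmod p$ divides $m$, so $|m|\ge p+1>p$ and $|\Theta|\ge(p+1)p^{p+4}>p^{p+5}$. The lower bound follows. For $N\ge p+5$ we have $|\Theta|\ge p^{p+5}$ trivially, with equality iff $\Theta=\pm p^{p+5}$.

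It remains to decide when $\pm p^{p+5}\in S$; since $S=-S$, it suffices to treat $p^{p+5}$. If $1$ is admissible for $\IrrE$, then $r^E=1$, and Theorem~\ref{thm:tail}(a) (via Proposition~\ref{prop:construction} with $r=1$, which is allowed since $3\le p-2$) gives $p^{p+5}\Z\subseteq S$, so equality occurs.

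Conversely, suppose $1$ is not admissible for $\IrrE$, so $r^E\ge2$. Then $N=p+5<p+3+2r^E$, and $p+5\le2p-2$ for $p\ge7$. Theorem~\ref{thm:tail}(b) with $m=1$ (norm-free) excludes $\pm p^{p+5}$, so equality fails. This uses no hypothesis on $h_p^+$.

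Finally we translate admissibility. $1$ is admissible for $\mathcal I$ iff $3i>p-1$ for all $i\in\mathcal I$. Since $\IrrE\subseteq\Irr$ (Lemma~\ref{lem:units}(iv)), admissibility for $\Irr$, i.e.\ the condition on all irregular pairs, implies admissibility for $\IrrE$; this gives the "in particular" statement. If $p\nmid h_p^+$, Lemma~\ref{lem:units}(v) gives $\IrrE=\Irr$, so the condition is also necessary.

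The only delicate step is the $N=p+3$ case: the bound $(p+1)^{p-6}>p^2$ needs $p\ge11$. Indeed, for $p=7$ one only gets $|m|\ge8$, which explains the threshold in the statement. Everything else is direct invocation of Theorem~\ref{thm:tail} and Proposition~\ref{prop:support}.
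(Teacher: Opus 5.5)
Your proposal is correct and follows the paper's proof essentially step for step. It uses Proposition~\ref{prop:support} at valuation $p+3$, Theorem~\ref{thm:tail}(b) both for the non-norm-free cofactor at $p+4$ and for excluding $\pm p^{p+5}$ when $r^E\ge2$, Theorem~\ref{thm:tail}(a) for attainment, and $\IrrE\subseteq\Irr$ (with equality when $p\nmid h_p^+$) for the final translation, which the paper packages as Theorem~\ref{thm:tail}(c).
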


\begin{proof}
The valuation of $\Theta$ is at least $p+3$ by
Lemma~\ref{lem:profiles}. At valuation $p+3$,
Proposition~\ref{prop:support} gives $|\Theta|\ge p^{p+3}(p+1)^{p-6}$,
which exceeds $p^{p+5}$ for $p\ge11$. At valuation $p+4$,
Theorem~\ref{thm:tail}(b) shows that the cofactor is not norm-free, so
some ideal norm, which is at least $p+1$, divides it; hence
$|\Theta|\ge p^{p+4}(p+1)$. At valuation at least $p+5$ we have
$|\Theta|\ge p^{p+5}$, with equality only for $\Theta=\pm p^{p+5}$. If
$1$ is admissible for $\IrrE$, then $p^{p+5}\in S(C_p\times C_p)$ by
Theorem~\ref{thm:tail}(a). Otherwise $r^E\ge2$, and
Theorem~\ref{thm:tail}(b) with $N=p+5$ excludes $\pm p^{p+5}$. The
remaining assertions follow from Theorem~\ref{thm:tail}(c).
\end{proof}

Corollary~\ref{cor:intro-least} follows from Corollary~\ref{cor:least},
since $1$ is admissible for $\Irr$ exactly when every irregular pair
$(p,2i)$ satisfies $3i>p-1$.

\section{Affine compatibility at the critical valuations}\label{sec:systems}

In this section and the next two, $p\ge7$ is a regular prime, so that
Lemma~\ref{lem:units} describes the residues of the global units
completely and the local law~\eqref{eq:local-law} applies. We fix the cofactor ideals of the
character values and ask when integer group coefficients realize them at
the two critical valuations. The answer is a small collection of affine
systems over $\F$.

Recall that $M=(p-3)/2$. Fix nonzero principal integral ideals
$I_b=y_bR$ for $b\in\F$ and $I_\infty=y_\infty R$, all prime to $\p$,
and write
\[
 D_j(b)=c_j(y_b),\qquad d_{\infty,j}=c_j(y_\infty)\qquad(j=3,5,\ldots,p-2).
\]
Since the odd coordinates of global units vanish
(Lemma~\ref{lem:units}(i)), these data do not depend on the chosen
generators. As in Section~\ref{sec:rays}, functions of $b$ are represented by
polynomials of degree at most $p-1$, and products of such functions, as
well as all coefficient extractions involving them, are computed in
$\mathcal B=\F[b]/(b^p-b)$. Every degree condition below refers to this
canonical representative.

Fix an integer $s\ne0$ divisible by $p$ and put $\tau=s/p\bmod p$. We
seek character values
\[
 f_b=\varpi y_bu_b\quad(b\in\F),\qquad f_\infty=\varpi^ey_\infty u_\infty,
\]
with $e\in\{2,3\}$, global units $u_b,u_\infty$, and generic leading
coefficients $[T]H_b=1$. By Lemma~\ref{lem:profiles}, every determinant
of valuation $p+3$ or $p+4$ arises in this way after a linear change of
coordinates. Indeed, by the moment conditions~\eqref{eq:degree} with $n=1$, the
leading coefficients $h_1(b)$ are equal to a common constant
$\lambda\ne0$, and replacing $v$ by the vector
$(i,j)\mapsto v_{\lambda i,\lambda j}$ replaces every character value by
its image under $\sigma_{\lambda^{-1}}$; this makes the leading
coefficients equal to one and replaces the cofactor ideals by Galois
conjugates. The local
law~\eqref{eq:local-law} and the
moment conditions~\eqref{eq:degree}--\eqref{eq:constant} are used at
full precision: $H$ is known modulo $T^p$, the nonconstant moment
conditions run through degree $p-2$, and the constant moment condition
involves degree $p-1$.

Write $H_b=T(1+q(b)T+\cdots)$. The degree-two moment condition gives
$\deg q\le2$, and $[b^2]q=h_{\infty,2}$ is nonzero for $e=2$ and zero
for $e=3$. A translation removes the affine part of $q$ and preserves
the augmentation and every ideal (Section~\ref{sec:rays}). Hence
we may assume that
\begin{equation}\label{eq:affine-q-normal}
 q(b)=ab^2\quad(e=2),\qquad q(b)=0\quad(e=3).
\end{equation}
For $e=2$, the coefficient $a=h_{\infty,2}$ is the leading coefficient
of the exceptional transform.

\begin{definition}[Affine coefficient operator]\label{def:operator}
Let $2\le L\le p-1$, let $q\in\F$, and prescribe odd coordinates $d_j$
for $3\le j<L$. Set
\[
 K_{q,d}(T)=\exp\Bigl(qT+\sum_{\substack{3\le j<L\\ j\text{ odd}}}d_jT^j\Bigr)
 \pmod{T^L}.
\]
For given \emph{raw even coefficients} $x_{2i}$ with $2i<L$, define
$E_0=1$ and, recursively,
\begin{equation}\label{eq:affine-recurrence}
 E_{2i}=x_{2i}-\sum_{0\le j<i}E_{2j}\,[T^{2i-2j}]K_{q,d}.
\end{equation}
Then put
\begin{equation}\label{eq:affine-operator}
 \Aop_L(q,d;x)=K_{q,d}(T)\sum_{0\le2i<L}E_{2i}T^{2i}\pmod{T^L}.
\end{equation}
The same formulas define the operator over $\mathcal B$.
\end{definition}

\begin{lemma}[Triangular coefficients]\label{lem:affine-triangular}
The series $\Aop_L(q,d;x)$ is the unique series modulo $T^L$ with
constant coefficient one, first logarithmic coordinate $q$, odd
logarithmic coordinates $d_j$ for $3\le j<L$, and raw even coefficients
$x_{2i}$. For fixed
$q$ and $d$, each of its coefficients is an affine function of the
$x_{2i}$: the coefficient of degree $2i$ is $x_{2i}$, and the
coefficient of degree $2i+1$ depends only on $x_2,\ldots,x_{2i}$.
\end{lemma}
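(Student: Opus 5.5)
The proof rests on one factorization: any series $F\equiv1+\cdots\pmod{T^L}$ can be written uniquely as $F=K\cdot J$, where $K=\exp(\text{odd part of }\log F)$ and $J=\exp(\text{even part of }\log F)$ is an even series with constant term one. The series $K$ is determined by the first logarithmic coordinate $q$ and the odd coordinates $d_j$; indeed $K=K_{q,d}$. So prescribing $q$ and the $d_j$ amounts to fixing $K_{q,d}$, and $F$ then ranges over $K_{q,d}\cdot J$ as $J$ runs through all even series $\sum_{0\le2i<L}E_{2i}T^{2i}$ with $E_0=1$. The exponential map from even series without constant term to even series with constant term one is bijective modulo $T^L$, so the even coordinates of $\log F$ and the coefficients $E_{2i}$ of $J$ determine each other, and either can serve as free parameters.

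For existence and uniqueness, compute the coefficient of $T^{2i}$ in $K_{q,d}\cdot J$. It is $\sum_{0\le j\le i}E_{2j}[T^{2i-2j}]K_{q,d}=E_{2i}+\sum_{j<i}E_{2j}[T^{2i-2j}]K_{q,d}$, using $[T^0]K=1$. Requiring this to equal $x_{2i}$ is exactly the recurrence~\eqref{eq:affine-recurrence}. That recurrence has a unique solution, obtained by induction on $i$, so $\Aop_L(q,d;x)$ has the stated properties. Conversely, any series with those properties factors as $K_{q,d}J$ for some even $J$, and the recurrence then forces $J$ to be the one in~\eqref{eq:affine-operator}. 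This gives uniqueness.

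Affineness follows from the same induction. Each $E_{2i}$ is $x_{2i}$ plus a linear combination, with coefficients depending only on $K_{q,d}$, of $E_0=1,E_2,\dots,E_{2i-2}$. Hence $E_{2i}$ is affine in $x_2,\dots,x_{2i}$. The coefficient of $T^{2i+1}$ in the product is $\sum_{j\le i}E_{2j}[T^{2i+1-2j}]K_{q,d}$, because $J$ has only even terms. This involves only $E_0,\dots,E_{2i}$, so it is affine in $x_2,\dots,x_{2i}$, and the coefficient of degree $2i$ is $x_{2i}$ by construction.

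All of this is routine. The only care needed is to note that truncation modulo $T^L$ is compatible with the factorization, and that the argument works verbatim over $\mathcal B$: there $K$ has constant term one, so no division by non-units is needed, and the logarithm and exponential have denominators below $L\le p-1<p$.
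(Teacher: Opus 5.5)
Your proposal is correct and follows the paper's proof essentially step by step. The paper likewise writes every series with the prescribed logarithmic data as $K_{q,d}E$ with $E$ even and $E_0=1$; your $J=\exp(\text{even part of }\log F)$ is exactly its $F/K_{q,d}$. It then matches raw even coefficients to obtain the recurrence~\eqref{eq:affine-recurrence}, and reads off the triangular dependence from the fact that the degree-$(2i+1)$ coefficient uses only $E_0,\ldots,E_{2i}$, just as you do.
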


\begin{proof}
All factorials and logarithmic denominators that occur are smaller than
$p$, so the truncated exponential and logarithm are inverse bijections
between $T\F[T]/(T^L)$ and $1+T\F[T]/(T^L)$, and similarly over
$\mathcal B$. A series with the prescribed odd logarithmic coordinates,
divided by $K_{q,d}$, has an even logarithm and is therefore even.
Conversely, every even series with constant coefficient one has an even
logarithm. Hence the series with the prescribed logarithmic data are
exactly the products $K_{q,d}E$ with $E$ even and $E_0=1$. Matching the
raw coefficient of degree $2i$ determines $E_{2i}$ and
gives~\eqref{eq:affine-recurrence}, so by induction $E_{2i}$ is affine in
$x_2,\ldots,x_{2i}$. The coefficient of degree $2i+1$ of the product
uses only $E_0,\ldots,E_{2i}$. This proves uniqueness and the triangular
dependence. The argument is valid over $\mathcal B$; it says nothing
about degrees in $b$, and the degree conditions are imposed explicitly
below.
\end{proof}

We now set up the generic unknowns. They are polynomials
\begin{equation}\label{eq:affine-unknowns}
 X_{2i}\in\F[b]_{\le2i+1}\qquad(1\le i\le M),
\end{equation}
where $\F[b]_{\le k}$ denotes the space of polynomials of degree at most
$k$. For fixed $q$ and $D$ we form
\begin{equation}\label{eq:affine-generic}
 H_b(T)=T\,\Aop_{p-1}\bigl(q(b),D(b);X_2(b),X_4(b),\ldots,X_{p-3}(b)\bigr),
 \qquad h_n(b)=[T^n]H_b.
\end{equation}
For $1\le n\le p-2$ put $t_n=[b^n]h_n(b)$; by~\eqref{eq:degree} these
are the coefficients that the exceptional transform must have. The odd
coefficients $h_{2i+1}=X_{2i}$ automatically have the required degrees,
and so do $h_1=1$ and $h_2=q$. The remaining generic degree conditions
are exactly
\begin{equation}\label{eq:affine-generic-equations}
 [b^k]h_n=0\qquad(n=4,6,\ldots,p-3,\quad n<k\le p-1).
\end{equation}
These equations, including every reduction modulo $b^p-b$, are affine in
the coefficients of the unknowns~\eqref{eq:affine-unknowns}. There is no
degree condition on $h_{p-1}$, whose sum enters the constant moment
condition instead. In this way every generic moment condition is
accounted for, without any assumption on the degrees of products of
defect functions.

The first nontrivial even coefficient is
\begin{equation}\label{eq:affine-h4}
 h_4=D_3+qX_2-q^3/3.
\end{equation}
For $q=ab^2$ the last two terms have degrees at most five and exactly
six. Since $p\ge7$, these degrees are at most $p-1$, so no reduction
occurs, and the condition $\deg h_4\le4$ holds only if
\begin{equation}\label{eq:affine-cubic}
 \deg D_3=6,\qquad a^3=3[b^6]D_3,\qquad [b^3]X_2=-[b^5]D_3/a.
\end{equation}
The last relation says that $t_3=aB$ with $B=-[b^5]D_3/a^2$. For $q=0$
the corresponding necessary condition is $\deg D_3\le4$.

\begin{definition}[The affine tests]\label{def:affine-tests}
The test $\Ltwo(D,d_\infty)$ rejects unless $\deg D_3=6$. For each
nonzero solution $a$ of $a^3=3[b^6]D_3$ (each such $a$ defines a
\emph{branch} of the test), put
\[
 B=-[b^5]D_3/a^2,\qquad q(b)=ab^2,
\]
form the series~\eqref{eq:affine-generic} with the
unknowns~\eqref{eq:affine-unknowns}, and impose the
equations~\eqref{eq:affine-generic-equations} together with the leading
equations
\begin{equation}\label{eq:affine-r2-leading}
 t_2=a,\qquad t_3=aB.
\end{equation}
Then form
\begin{equation}\label{eq:affine-r2-Q}
 Q_\infty=\Aop_{p-3}\bigl(B,d_\infty;(t_{2i+2}/a)_{1\le i\le M-1}\bigr)
\end{equation}
and impose the \emph{special equations}
\begin{equation}\label{eq:affine-r2-special}
 t_{j+2}=a\,[T^j]Q_\infty\qquad(j=3,5,\ldots,p-4).
\end{equation}
The test accepts if the system of at least one branch is consistent; if
$a^3=3[b^6]D_3$ has no solution, the test rejects.

The test $\Lthree(D,d_\infty,\tau)$ rejects unless $\deg D_3\le4$. For
each $a\in\F^\times$ (again called a \emph{branch}), put
\[
 B=[b^4]D_3/a,\qquad q=0,
\]
and impose~\eqref{eq:affine-generic-equations} together with
\begin{equation}\label{eq:affine-r3-leading}
 t_3=a,\qquad t_4=aB.
\end{equation}
Then form
\begin{equation}\label{eq:affine-r3-Q}
 Q_\infty=\Aop_{p-3}\bigl(B,d_\infty;(t_{2i+3}/a)_{1\le i\le M-1}\bigr)
\end{equation}
and impose the special equations and the constant equation
\begin{equation}\label{eq:affine-r3-special}
 \begin{split}
 t_{j+3}&=a\,[T^j]Q_\infty\qquad(j=3,5,\ldots,p-6),\\
 \tau+\sum_{b\in\F}h_{p-1}(b)+a\,[T^{p-4}]Q_\infty&=0.
 \end{split}
\end{equation}
For $p=7$ the first family in~\eqref{eq:affine-r3-special} is empty. The
test accepts if the system of at least one of these $p-1$ branches is
consistent.
\end{definition}

On a fixed branch the quantities $a$, $B$, $q$, $D$ and $d_\infty$ are
data. Every $t_n$ is affine in the unknowns by
Lemma~\ref{lem:affine-triangular}, and applying the same lemma to the
operator in~\eqref{eq:affine-r2-Q} or~\eqref{eq:affine-r3-Q} shows that
every special equation is affine as well. The tests are therefore
finite computations: generate the coefficients by the
recurrence~\eqref{eq:affine-recurrence}, reduce functions of $b$ modulo
$b^p-b$, and compare $\operatorname{rank}\mathsf M$ with
$\operatorname{rank}(\mathsf M\mid-\mathsf c)$ for the resulting system
$\mathsf c+\mathsf Mx=0$.

\begin{proposition}[Sizes of the systems]\label{prop:affine-count}
Each branch has exactly $(p^2-9)/4$ scalar unknowns and at most
$(p-3)^2/4+1$ equations. The test $\Ltwo$ has at most $\gcd(3,p-1)$
branches, and $\Lthree$ has $p-1$ branches.
\end{proposition}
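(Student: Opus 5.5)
The count is a direct bookkeeping exercise on Definition~\ref{def:affine-tests}; I would carry it out in three steps: unknowns, equations, branches.

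\emph{Unknowns.} On a fixed branch the only unknowns are the coefficients of the polynomials $X_{2i}\in\F[b]_{\le2i+1}$ for $1\le i\le M$, as in~\eqref{eq:affine-unknowns}. Everything else ($a$, $B$, $q$, $D$, $d_\infty$, $\tau$) is data. The polynomial $X_{2i}$ contributes $2i+2$ scalars, so the total is
\[
 \sum_{i=1}^{M}(2i+2)=M(M+1)+2M=M(M+3)=\frac{p-3}{2}\cdot\frac{p+3}{2}=\frac{p^2-9}{4}.
\]
The exceptional transform introduces no unknowns, because $t_n$ and $Q_\infty$ are affine expressions in the $X_{2i}$ by Lemma~\ref{lem:affine-triangular}. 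So the count is exact.

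\emph{Equations.} The generic equations~\eqref{eq:affine-generic-equations} run over $n=2m$ with $2\le m\le M$. For each such $n$ there are $p-1-n$ conditions $[b^k]h_n=0$ with $n<k\le p-1$. Summing gives
\[
 \sum_{m=2}^{M}(p-1-2m)=(M-1)(p-1)-\bigl(M(M+1)-2\bigr).
\]
With $p-1=2M+2$ this simplifies to $M^2-M=M(M-1)$.

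For $\Ltwo$, I add the two leading equations~\eqref{eq:affine-r2-leading} and the special equations~\eqref{eq:affine-r2-special} for $j=3,5,\dots,p-4$. There are $(p-5)/2=M-1$ special equations. The total is
\[
 M(M-1)+2+(M-1)=M^2+1=\frac{(p-3)^2}{4}+1.
\]
For $\Lthree$, the leading equations~\eqref{eq:affine-r3-leading} contribute $2$, the special equations for $j=3,\dots,p-6$ contribute $M-2$, and the constant equation contributes $1$. This gives the same total $M^2+1$.

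I write ``at most'' because some of the listed equations may be trivially satisfied or duplicated; for instance, $t_2=a$ is forced by the normalization $q=ab^2$. This is the only point needing care. If I instead counted $t_2=a$ as automatic, the bound would only improve, so the stated upper bound holds either way.

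\emph{Branches.} For $\Ltwo$, the branches are the nonzero roots $a\in\F$ of $a^3=c$ with $c=3[b^6]D_3\ne0$ (the test rejects outright unless $\deg D_3=6$). The cube map on the cyclic group $\F^\times$ has kernel of order $\gcd(3,p-1)$. Hence each nonzero $c$ has either $0$ or exactly $\gcd(3,p-1)$ cube roots, so there are at most $\gcd(3,p-1)$ branches. For $\Lthree$, the branches are indexed by all $a\in\F^\times$ by definition, so there are exactly $p-1$ of them.

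I expect no real obstacle here; the only steps requiring attention are the index ranges in the arithmetic sums and making sure the special-equation ranges are counted correctly.
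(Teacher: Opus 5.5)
Your proposal is correct and follows the paper's own proof essentially line by line. Both count $M(M+3)=(p^2-9)/4$ unknowns and $M(M-1)$ generic equations, add two leading equations and $M-1$ further equations (special equations, plus the constant equation when $e=3$), and bound the branches of $\Ltwo$ by the kernel of cubing on $\F^\times$. Your remark that $t_2=a$ holds automatically under $q=ab^2$ is accurate, and it matches the paper's own comment that redundant equations may occur, so no rank statement is needed.
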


\begin{proof}
The number of coefficients in~\eqref{eq:affine-unknowns} is
\[
 \sum_{i=1}^{M}(2i+2)=M(M+3)=\frac{p^2-9}{4},
\]
and the number of equations in~\eqref{eq:affine-generic-equations} is
$\sum_{k=2}^{M}(p-1-2k)=M(M-1)$. Each branch adds two leading equations
and $M-1$ special equations, counting the constant equation as special
when $e=3$. The total is $M^2+1=(p-3)^2/4+1$; redundant equations may
occur, and no assertion about the rank is needed. Cubing on the cyclic
group $\F^\times$ has kernel of order $\gcd(3,p-1)$, so the equation
$a^3=3[b^6]D_3$ has either no solution or exactly $\gcd(3,p-1)$
solutions, all of which must be tested. The branches of $\Lthree$ are
indexed by the nonzero leading coefficient $a$.
\end{proof}

At $p=7$, for example, each branch has ten unknowns, two generic
equations, two leading equations and one special equation.

\begin{theorem}[Placed principal ideals]\label{thm:placed}
Let $p\ge7$ be regular, let $e\in\{2,3\}$, and let the principal ideals
and the nonzero augmentation $s$ be as fixed above. There exist integer
group coefficients with augmentation $s$ and character values
\[
 f_b=\varpi y_bu_b\quad(b\in\F),\qquad
 f_\infty=\varpi^ey_\infty u_\infty,
\]
where all $u_b,u_\infty$ are global units and all generic transforms have
leading coefficient one, if and only if $\Ltwo(D,d_\infty)$ accepts when
$e=2$, or $\Lthree(D,d_\infty,s/p\bmod p)$ accepts when $e=3$. An
accepted system yields the integer coefficients constructively, and
their determinant is
\begin{equation}\label{eq:affine-determinant}
 s\,p^{p+e}\prod_{b\in\F}\Nm(I_b)\,\Nm(I_\infty).
\end{equation}
\end{theorem}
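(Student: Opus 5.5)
The plan is to translate the existence question, through Lemmas~\ref{lem:reconstruct} and~\ref{lem:moments}, into a question about transforms alone. Then I would use the local law~\eqref{eq:local-law} to describe exactly which transforms the prescribed ideals allow, and check that the resulting conditions are precisely the affine systems of Definition~\ref{def:affine-tests}.

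By Lemma~\ref{lem:reconstruct}, integer coefficients with augmentation $s$ exist if and only if there are integral rays of sum $s$ whose evaluations satisfy the reconstruction congruences. Lemma~\ref{lem:moments} rewrites these congruences as the moment conditions~\eqref{eq:degree}--\eqref{eq:constant}, which involve only the transforms $H_\delta$ modulo $T^p$. By~\eqref{eq:window}, each transform depends only on the character value modulo $\p^p$. Conversely, any admissible transform is realized by some value $\varpi^e y u$ whose integral ray is unique. So the question is whether we can choose transforms $H_b$ and $H_\infty$ satisfying three requirements: the local law for $y_b$ with $e=1$, the local law for $y_\infty$ with exponent $e$, and the moment conditions. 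Once this holds, the determinant formula~\eqref{eq:affine-determinant} follows from~\eqref{eq:factor}, since units have norm one and $\Nm(\varpi)=p$.

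Next I would parametrize each side.

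\emph{Generic side.} By~\eqref{eq:local-law} with $e=1$ and leading coefficient one, $H_b/T$ is an arbitrary series modulo $T^{p-1}$ with constant term $1$, arbitrary linear and even logarithmic coordinates, and odd coordinates $D_j(b)$. By Lemma~\ref{lem:affine-triangular}, this is the same as $\Aop_{p-1}(q(b),D(b);x(b))$ with arbitrary raw even coefficients, and the raw even coefficients are exactly the odd coefficients $h_{2i+1}$. The degree-one and degree-two moment conditions, together with a translation, give the normalization~\eqref{eq:affine-q-normal}. The odd conditions $\deg h_{2i+1}\le 2i+1$ become the unknown spaces~\eqref{eq:affine-unknowns}, and the even ones become~\eqref{eq:affine-generic-equations}. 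For $e=2$, the analysis of $h_4$ in~\eqref{eq:affine-cubic} forces $\deg D_3=6$, the cubic equation for $a$, and $t_3=aB$. For $e=3$, $\deg h_4\le 4$ forces $\deg D_3\le4$.

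\emph{Exceptional side.} The moment conditions prescribe $h_{\infty,n}=t_n$ for $n\le p-2$, and $h_{\infty,p-1}$ is constrained only through~\eqref{eq:constant}. By~\eqref{eq:local-law}, $H_\infty/(h_{\infty,e}T^e)$ modulo $T^{p-e}$ is a series whose odd logarithmic coordinates of degree $\ge3$ equal $d_{\infty,j}$, and whose linear and even coordinates are free. Setting $a=h_{\infty,e}=t_e$ (the leading equation) and $B$ equal to its linear coefficient, this relative series equals $\Aop_{p-e}(B,d_\infty;\cdot)$ with raw even coefficients $t_{e+2i}/a$.

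One point needs care: the first-order consistency. For $e=2$ we need $t_3=aB$; for $e=3$ we need $t_4=aB$ with $B=[b^4]D_3/a$. I would verify the latter by computing $h_5$ and the relevant top coefficients. The remaining odd coefficients of the exceptional series are then forced, which gives the special equations. For $e=3$ the top coefficient $h_{\infty,p-1}=a[T^{p-4}]Q_\infty$ enters~\eqref{eq:constant}, which gives the constant equation. For $e=2$ the top coefficient $[T^{p-3}]$ of the relative series is an even coordinate and is free. It can therefore be chosen to satisfy~\eqref{eq:constant}, which explains why $\Ltwo$ has no constant equation and does not depend on $\tau$.

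I expect the main obstacle to be this precision bookkeeping at the top degrees. Specifically, I must show that using $\Aop_{p-3}$ truncated at $T^{p-3}$ captures exactly the visible coordinates in both cases: for $e=2$ the last retained odd index is $p-4$, while for $e=3$ the last odd index $p-4$ feeds the constant equation. I also need $h_{p-1}(b)$, which carries no degree condition, to be correctly accounted for. I would check this carefully against~\eqref{eq:window} and the proof of Lemma~\ref{lem:parity}.

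The remaining ingredients are straightforward. Branches correspond to the possible values of $a$, and every equation is affine by Lemma~\ref{lem:affine-triangular}. For sufficiency, any solution determines transforms satisfying the moment conditions, the local law supplies the units, and Lemma~\ref{lem:reconstruct} produces the integer coefficients. Both directions together give the equivalence.
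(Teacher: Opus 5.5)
Your proposal is correct and follows the paper's proof essentially step by step. You reduce to transforms via Lemmas~\ref{lem:reconstruct} and~\ref{lem:moments} and parametrize both sides by the local law and Lemma~\ref{lem:affine-triangular}. For $e=2$ the free even top coefficient of the relative series absorbs the constant moment condition, for $e=3$ the odd top coefficient $[T^{p-4}]Q_\infty$ gives the constant equation, and the local law supplies the lift. One small correction: for $e=3$ the relation $t_4=aB$ needs no computation of $h_5$, because $q=0$ gives $h_4=D_3$ and hence $t_4=[b^4]D_3=aB$ holds identically (and for $e=2$ the exceptional relative series should be truncated with $\Aop_{p-3}$ rather than $\Aop_{p-2}$, as you anticipate).
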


\begin{proof}
We first work with transforms: for $e=2$ and then for $e=3$, we show
that the existence of the coefficients forces some branch to accept and
that an accepted branch yields compatible transforms. Finally we lift
the transforms to character values and integer coefficients.

Suppose first that the integer coefficients exist. The translation
leading to~\eqref{eq:affine-q-normal} preserves the prescribed data and
the augmentation. The local law~\eqref{eq:local-law} and
Lemma~\ref{lem:affine-triangular} put the generic transforms in the
form~\eqref{eq:affine-generic}, and the moment bounds for the odd
coefficients give exactly the polynomial spaces~\eqref{eq:affine-unknowns}.
The other generic degree bounds give~\eqref{eq:affine-generic-equations},
and the exceptional coefficients of degree at most $p-2$ must equal the
corresponding $t_n$.

Let $e=2$. The exceptional leading coefficient is $t_2=a\ne0$.
Equation~\eqref{eq:affine-h4} and its degree condition
give~\eqref{eq:affine-cubic}, so $a$ belongs to one of the branches and
$t_3=aB$. The series $H_\infty/(aT^2)$ has first logarithmic coordinate
$B$, odd logarithmic coordinates $d_{\infty,j}$ for $3\le j\le p-4$,
and raw even coefficients $t_{2i+2}/a$ for $1\le i\le M-1$. By the
uniqueness in Lemma~\ref{lem:affine-triangular}, its truncation modulo
$T^{p-3}$ is the series~\eqref{eq:affine-r2-Q}, and matching its
remaining odd coefficients gives~\eqref{eq:affine-r2-special}. Hence
the branch accepts.

Conversely, a solution of a branch for $e=2$ gives all generic
transforms and, through degree $p-2$, the exceptional series
$aT^2Q_\infty$. Its coefficients agree with the required
values $t_n$: in degrees two and three by~\eqref{eq:affine-r2-leading},
in higher even degrees by the raw inputs of~\eqref{eq:affine-r2-Q}, and
in higher odd degrees by~\eqref{eq:affine-r2-special}; in degree one
both vanish. Set
\begin{equation}\label{eq:affine-free-constant}
 h_{\infty,p-1}=-\tau-\sum_{b\in\F}h_{p-1}(b)
\end{equation}
and add $h_{\infty,p-1}T^{p-1}$ to $aT^2Q_\infty$. In the relative series
this changes only the even coefficient of degree $p-3$. Since that is
the highest retained degree, no lower logarithmic coordinate changes,
and no odd coordinate is prescribed in that degree; so the exceptional
local law still holds. All nonconstant moment conditions and the
constant moment condition now hold, for every value of $\tau$.

Let $e=3$. Then $q=0$, so~\eqref{eq:affine-h4} reads $h_4=D_3$ and
forces $\deg D_3\le4$. The first exceptional coefficient is $t_3=a\ne0$,
and the next is $t_4=[b^4]D_3=aB$. The relative series $H_\infty/(aT^3)$
is needed through degree $p-4$. Its raw even coefficients are
$t_{2i+3}/a$ for $1\le i\le M-1$, and its prescribed odd logarithmic
coordinates again end at degree $p-4$, so
Lemma~\ref{lem:affine-triangular} identifies it
with~\eqref{eq:affine-r3-Q}. Its odd coefficients below degree $p-4$
give the first equations of~\eqref{eq:affine-r3-special}. Its highest
coefficient is $h_{\infty,p-1}/a=[T^{p-4}]Q_\infty$, and the constant
moment condition becomes the last equation of~\eqref{eq:affine-r3-special}.
This proves necessity. Conversely, for an accepted branch take
$H_\infty=aT^3Q_\infty$ modulo $T^p$. The leading equations, the raw even
inputs and the special equations match every coefficient through
degree $p-2$; the first two coefficients and $t_1,t_2$ vanish; and the
last equation gives the constant moment condition.

It remains to lift either collection of transforms to actual character
values. For a ray of valuation $e'\in\{1,2,3\}$ with chosen generator
$y$, put
\begin{equation}\label{eq:affine-unit-recovery}
 z(T)=\frac{H(T)/T^{e'}}{((1-\exp T)/T)^{e'}}\pmod{T^{p-e'}},
 \qquad w(T)=\frac{z(T)}{y(T)}.
\end{equation}
These divisions are by units. By~\eqref{eq:relative-log},
division by the uniformizer factor changes only the constant, linear and
even logarithmic coordinates, and the local law makes every visible odd
coordinate of $w$ of degree at least three vanish. Extend the logarithm
of $w$ to degree $p-2$ by zero in the invisible coordinates.
Lemma~\ref{lem:units} supplies a global unit $u$ with this residue;
its proof gives $u$ explicitly as a product of circular units. Then
$f=\varpi^{e'}yu$ has the transform $H$.

Write $f=\sum_{t=0}^{p-2}a_t\zeta^t$ with $a_t\in\Z$, and define
\begin{equation}\label{eq:affine-ray-lift}
 k_f=\frac{s-\sum_{t=0}^{p-2}a_t}{p},\qquad
 c_f=(a_0+k_f,\ldots,a_{p-2}+k_f,k_f).
\end{equation}
The number $k_f$ is an integer, because $f\in\p$ implies
$\sum_ta_t\equiv0\pmod p$ and $p\mid s$. The ray $c_f$ has sum $s$ and
evaluation $f$, and by~\eqref{eq:window} and
Lemma~\ref{lem:transform} its reduction modulo $p$ is the prescribed one.
By Lemma~\ref{lem:moments}, the moment conditions therefore give the
reconstruction congruences, and
Lemma~\ref{lem:reconstruct} recovers all $p^2$ integer group
coefficients, with these rays and augmentation $s$. Global units have
norm one, $\Nm(\varpi)=p$, and $\Nm(y)=\Nm(yR)$, so~\eqref{eq:factor}
gives~\eqref{eq:affine-determinant}. The proof uses generators only of
the prescribed principal ideals; it neither chooses generators of
nonprincipal prime ideals nor needs a trivial class group.
\end{proof}

The two tests differ in how the coefficients of top degree enter. The
generic defect function $D_{p-2}$ enters only $h_{p-1}$, with
coefficient one, so it does not affect $\Ltwo$ and enters $\Lthree$ only
through $\sum_bD_{p-2}(b)$. The exceptional coordinate $d_{\infty,p-2}$
is invisible in both cases, because the largest visible odd degree of
the relative series is $p-4$. When $e=2$, the free even coefficient of
degree $p-3$ of the relative series absorbs the whole constant moment
condition, whereas for $e=3$ the highest relative degree $p-4$ is odd
and produces the last equation of~\eqref{eq:affine-r3-special}. Since
the augmentation $s$ may be negative, the theorem also produces negative
determinants.

\section{Class-aware allocation and the whole set}\label{sec:allocation}

The tests of Section~\ref{sec:systems} concern prescribed principal
ideals. To decide whether a given integer cofactor occurs, we must
distribute its prime ideals among the character rays. Let $p\ge7$ be
regular and let $m>0$ be prime to $p$. Choose one prime ideal
$\mathfrak q_\ell$ above each prime $\ell\mid m$; every prime ideal above
$\ell$ is a Galois conjugate of $\mathfrak q_\ell$ and has norm
$\ell^{\ord_p(\ell)}$.

\begin{definition}[Allocations]\label{def:allocation}
An \emph{allocation} for $m$ places, for each prime $\ell\mid m$, at
most $\lfloor v_\ell(m)/\ord_p(\ell)\rfloor$ conjugates of
$\mathfrak q_\ell$ into the $p+1$ rays, with repetitions allowed. For
each ray $\delta$, let the \emph{ray ideal} $I_\delta$ be the product of
the ideals placed in $\delta$, an empty product being $R$. The allocation is
\emph{principal} if every $I_\delta$ is principal. Its
defect data are those of~\eqref{eq:ideal-defects}; we abbreviate the
whole collection $(D(b)_{b\in\F},d_\infty)$ by $D$, so that $\Ltwo(D)$
and $\Lthree(D,\tau)$ denote the tests of
Definition~\ref{def:affine-tests}.
\end{definition}

Principality is imposed separately in each ray; principality of the
product of all allocated ideals would not suffice, because the
cofactor of each character value must generate its own ray ideal. The unallocated part
of $m$ is absorbed by the augmentation.

\begin{theorem}[Scalar criterion]\label{thm:scalar}
Let $p\ge7$ be regular, let $h=|\Cl(R)|$, and let $m>0$ be prime to
$p$. Then $p^{p+3}m\in S(G)$ if and only if some principal allocation
for $m$ passes $\Ltwo(D)$. Also, $p^{p+4}m\in S(G)$ if and only if some
principal allocation passes $\Ltwo(D)$ or $\Lthree(D,m\bmod p)$. The
same criteria hold for $-p^{p+3}m$ and $-p^{p+4}m$, and every accepted
allocation yields explicit integer group coefficients. Moreover, put
\[
 M'=\frac{(p+1)(p-3)}{2}-1.
\]
In both criteria it suffices to consider allocations that place at most
$h^{p+1}p^{M'}-1$ prime ideals above each rational prime $\ell$,
independently of $v_\ell(m)$; when $h=1$, at most $(p-1)M'$ prime ideals
above each $\ell$ suffice.
\end{theorem}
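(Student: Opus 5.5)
The plan is to reduce the scalar criterion to Theorem~\ref{thm:placed}, using the factorization~\eqref{eq:factor} to move between a determinant and an allocation.

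\textbf{Necessity.} Suppose $\Theta(v)=\pm p^{p+3}m$ or $\pm p^{p+4}m$. By Lemma~\ref{lem:profiles}, after a linear change of coordinates the profile is $(1;2)$, or respectively $(2;2)$ or $(1;3)$. Rescaling by $\sigma_{\lambda^{-1}}$, as in Section~\ref{sec:systems}, makes the generic leading coefficients equal to one and replaces the cofactor ideals by Galois conjugates. Write $f_\delta=\varpi^{e_\delta}I_\delta$. Each $I_\delta$ is principal, generated by $f_\delta/\varpi^{e_\delta}$, and the part of $|\Theta|$ prime to $p$ equals $|s'|\prod_\delta\Nm(I_\delta)$, where $s'$ is the part of $s$ prime to $p$. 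Factoring each $I_\delta$ into prime ideals gives an allocation for $m$: the multiplicity bound $\lfloor v_\ell(m)/\ord_p(\ell)\rfloor$ holds because the norms divide $m$, and every conjugate of $\mathfrak q_\ell$ is allowed. Theorem~\ref{thm:placed} then shows that $\Ltwo(D)$ accepts when $e=2$, and that $\Lthree(D,s/p\bmod p)$ accepts when $e=3$. In the profile $(1;3)$ we have $s=\pm p\,s'$ with $s'\prod\Nm(I_\delta)=m$, and every norm is $\equiv1\pmod p$, so $s/p\equiv\pm m\pmod p$. The sign has to be handled here, either by noting that $\tau$ enters the $\Lthree$ system only through the constant equation, or by applying $v\mapsto -v$; I will check which of these is compatible with the statement's use of $m\bmod p$.

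\textbf{Sufficiency.} Given an accepted principal allocation, put $s=\pm p\,m/\prod_\delta\Nm(I_\delta)$ for the test $\Ltwo$, and $s=\pm p^2 m/\prod\Nm(I_\delta)$ when $\Ltwo$ is used at valuation $p+4$. Then formula~\eqref{eq:affine-determinant} gives exactly $\pm p^{p+3}m$ or $\pm p^{p+4}m$; here $\Ltwo$ is valid for every $\tau$. This $s$ is an integer because each allocated norm divides $m$ with the correct multiplicity. For $\Lthree$ with $s=\pm pm/\prod\Nm$ we get $\tau\equiv\pm m\pmod p$, consistent with the test. The integer coefficients are then constructed explicitly by Theorem~\ref{thm:placed}.

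\textbf{Bound on the number of prime ideals.} This is the main obstacle. The plan is a pigeonhole argument on the invariants that actually enter the test. Each ray ideal matters only through its ideal class, an element of $\Cl(R)$, and its visible odd defects. Varying $b$ gives a vector in $\Cl(R)^{p+1}\times\F^{M'}$, where $M'$ counts the relevant odd coordinates across the rays; the top coordinate $D_{p-2}$ enters only through its sum and $d_{\infty,p-2}$ is invisible, which accounts for the $-1$. If more than $h^{p+1}p^{M'}-1$ conjugates of primes above $\ell$ are placed, then by pigeonhole on the partial products some nonempty subcollection has trivial class in every ray and zero total contribution to all relevant defects. Removing it keeps every ray ideal principal and leaves the test data unchanged. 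Its norm is simply transferred to the augmentation $s$; since that norm is $\equiv1\pmod p$, the value of $\tau$ is unchanged. When $h=1$, only the defects matter. Then Lemma~\ref{lem:ideals} gives $d_j(\sigma_t\mathfrak q)=t^jd_j(\mathfrak q)$, and I would use the additive order $p$ together with the $p-1$ conjugates, for instance by cancelling $p$ copies of the same conjugate in a ray, to get a bound of at most $(p-1)M'$ per prime. This last count is the part I am least sure of and would verify carefully.
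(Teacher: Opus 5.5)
Your reduction to Theorem~\ref{thm:placed} and the prefix-sum pigeonhole in $\Cl(R)^{p+1}\times\F^{M'}$ follow the paper, and they do give the general bound $h^{p+1}p^{M'}-1$.

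\textbf{The gap} is the refinement to $(p-1)M'$ when $h=1$. The route you suggest does not reach it. Cancelling $p$ copies of one block only limits each block type to multiplicity $p-1$. There are $(p+1)g$ block types, where $g$ is the number of primes above $\ell$, and for $\ell\equiv1\pmod p$ this is $p^2-1>M'$. So that route gives only $(p-1)(p^2-1)$. To do better you must use linear relations among \emph{different} blocks, not just $p\beta=0$.

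The missing step is the following. Choose, among the blocks, a basis $\beta_1,\dots,\beta_k$ of their $\F$-span, so that $k\le M'$. The defect vector of any allocation above $\ell$ is a sum of blocks, so it lies in this span. Hence it equals $\sum_ic_i\beta_i$ with integers $0\le c_i\le p-1$, because over $\F$ every linear combination is a nonnegative one. Now place, for each $i$, $c_i$ copies of the conjugate of $\mathfrak q_\ell$ defining $\beta_i$ into the ray defining $\beta_i$. This reproduces the contribution above $\ell$ to the defects of every ray, using at most $(p-1)M'$ ideals. The remaining checks are routine:
\begin{itemize}
\item principality is automatic since $h=1$;
\item if the original allocation is already shorter, keep it, so the bound $\lfloor v_\ell(m)/\ord_p(\ell)\rfloor$ is respected;
\item the freed norms are $\equiv1\pmod p$ and are absorbed in $s$ without changing $s/p\bmod p$.
\end{itemize}

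Two smaller corrections.

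First, in the sufficiency step for $\Lthree$, the choice $s=-pm/\prod_\delta\Nm(I_\delta)$ gives $\tau\equiv-m$. Theorem~\ref{thm:placed} would then require $\Lthree(D,-m\bmod p)$ to accept, not $\Lthree(D,m\bmod p)$, so it is not ``consistent with the test.'' Obtain the negative values instead by $v\mapsto-v$, using $\Theta(-v)=-\Theta(v)$, as the paper does. This also settles the sign question you left open in the necessity step.

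Second, the $-1$ in $M'=p(p-3)/2+(p-5)/2$ comes only from the invisible coordinate $d_{\infty,p-2}$. The generic values $D_{p-2}(b)$ are still recorded ray by ray.
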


\begin{proof}
Suppose that $p^{p+3}m$ or $p^{p+4}m$ occurs. Lemma~\ref{lem:profiles}
puts the character values into a critical profile, and removing their
powers of $\p$ leaves principal cofactor ideals. By~\eqref{eq:factor},
the norms of these ideals and the part of $s$ prime to $p$ account
for $m$. A prime ideal above $\ell$ uses $\ord_p(\ell)$ factors of $\ell$,
so the cofactor ideals form a principal allocation. The changes of
coordinates used in Section~\ref{sec:systems} permute the rays and
conjugate the ideals, and the set of allocations is stable under both
operations. By Theorem~\ref{thm:placed}, the allocation therefore passes
$\Ltwo$ at valuation $p+3$, and $\Ltwo$ or $\Lthree$ at valuation $p+4$.

For the converse, and to identify the augmentation residue, let
$Q_0=\prod_b\Nm(I_b)\,\Nm(I_\infty)$ for a principal allocation. Then
$Q_0\mid m$, and $Q_0\equiv1\pmod p$ because
$\ell^{\ord_p(\ell)}\equiv1\pmod p$ for every allocated factor. If $\Ltwo$
accepts, take $s=pm/Q_0$ at valuation $p+3$ or $s=p^2m/Q_0$ at valuation
$p+4$; this test does not depend on $\tau$. If $\Lthree$ accepts, take
$s=pm/Q_0$, so that $s/p\equiv m\pmod p$. Choosing generators of the
principal ideals $I_\delta$ and applying Theorem~\ref{thm:placed} gives the
desired determinant. Negating all coefficients negates the
determinant, which proves the assertions about signs.

For the bound, record for each ray ideal its ideal class, its
$(p-3)/2$ odd defects if the ray is generic, and its $(p-5)/2$ visible
odd defects if the ray is exceptional. These data lie in the finite
abelian group
\begin{equation}\label{eq:state-group}
 \mathsf H=\Cl(R)^{p+1}\times\F^{M'},\qquad|\mathsf H|=h^{p+1}p^{M'}.
\end{equation}
For a fixed rational prime $\ell$, call the element of $\mathsf H$
contributed by a conjugate of $\mathfrak q_\ell$ placed in a given ray a
\emph{block}. An allocation above $\ell$ is a word in these blocks, and
a shortest word representing a given element of $\mathsf H$ has no two
equal prefix sums: otherwise the nonempty subword between them sums to
zero and could be deleted. Its length is therefore at most
$|\mathsf H|-1$. Replacing an allocation by such a shorter word preserves
the ideal class and the visible defects of every ray ideal, does not increase the norm
used, and leaves the tests unchanged; the saved factors are absorbed in
$s$, and the residue of $s/p$ is preserved because their norms are
congruent to one modulo $p$. When $h=1$, the blocks lie in the vector
space $\F^{M'}$. If we choose a basis of their span among the blocks,
then every attainable vector is a combination of at most $M'$ basis vectors with
coefficients in $\{0,\ldots,p-1\}$, and therefore uses at most $(p-1)M'$
blocks. If the original allocation is shorter, we keep it.
\end{proof}

The criterion is a finite computation once the usual cyclotomic
class-group data and
generators of principal powers are available. For each prime ideal one
computes the defects from a generator of $\mathfrak q_\ell^h$; for an
accepted principal ray ideal one computes an actual generator before
applying the unit lift. The bound~\eqref{eq:state-group} proves
finiteness; it is not a practical estimate of the running time. Finite
allocation and conductor compatibility are established tools~\cite[Section~2.2 and Theorem~2.3]{Panraksa2026Order24}; here they are
combined with the ideal classes in each ray and with the affine systems
of Section~\ref{sec:systems}.

\begin{definition}[Cofactor sets]\label{def:cofactor-sets}
For a regular prime $p\ge7$, let $\mathcal C_{2,p}$ be the set of
positive integers $m$ prime to $p$ that have a principal allocation
accepted by $\Ltwo$, and let $\mathcal C_{23,p}$ be the set of those that
have a principal allocation accepted by $\Ltwo$ or by
$\Lthree(D,m\bmod p)$.
\end{definition}

These definitions involve only finite arithmetic conditions and make no
reference to determinants.

\begin{proof}[Proof of Theorem~\ref{thm:main}]
The values prime to $p$ form $\mathcal U_p$~\cite[Lemmas~2.1--2.2]{DeSilvaPinner2014}, and every nonzero
$p$-divisible value has valuation at least $p+3$ by
Lemma~\ref{lem:profiles}. Theorem~\ref{thm:scalar} describes the values
of valuations $p+3$ and $p+4$. For a regular prime, $r_p=1$, so
Theorem~\ref{thm:intro-tail}(a) gives every multiple of $p^{p+5}$,
including zero. Since no nonzero value has $p$-adic valuation between
$1$ and $p+2$, these cases exhaust $S(C_p\times C_p)$, which proves the
equality~\eqref{eq:intro-whole-set}. The finiteness and
constructivity assertions are part of Theorem~\ref{thm:scalar}.
\end{proof}

The tail $p^{p+5}\Z$ is sharp: by Theorem~\ref{thm:tail}(b) with
$m=1$, neither $p^{p+3}$ nor $p^{p+4}$ is a determinant, so
$1\notin\mathcal C_{2,p}\cup\mathcal C_{23,p}$.

\section{Prime cofactors and their density}\label{sec:prime-cofactors}

The affine tests become explicit when the cofactor is a prime. We keep
the assumption that $p\ge7$ is regular. At regular primes the support
restriction of Proposition~\ref{prop:support} can be refined, because
the defects of Lemma~\ref{lem:ideals} are additive over prime ideals.

\begin{proposition}[Support restriction]\label{prop:prime-support}
Let $p\ge7$ be regular and let $m>0$ be prime to $p$. If an allocation
for $m$ passes $\Ltwo$, then
\begin{equation}\label{eq:prime-support}
 \sum_{\substack{\ell\mid m\\ \ell^3\equiv1\ (\mathrm{mod}\ p)}}
 \Bigl\lfloor\frac{v_\ell(m)}{\ord_p(\ell)}\Bigr\rfloor\ \ge\ p-6.
\end{equation}
In particular, this inequality is necessary for
$\pm p^{p+3}m\in S(G)$.
\end{proposition}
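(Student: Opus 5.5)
The plan is to combine the degree argument from the proof of Proposition~\ref{prop:support} with the Frobenius relation~\eqref{eq:frobenius} of Lemma~\ref{lem:ideals}.

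First, I will fix a principal allocation that passes $\Ltwo$ on some branch. By the necessary conditions~\eqref{eq:affine-cubic} that precede the test, the generic defect function satisfies $\deg D_3=6$. Here $D_3(b)=c_3(y_b)=d_3(I_b)$ by Lemma~\ref{lem:ideals}. As in Proposition~\ref{prop:support}, a polynomial of exact degree six that is at most $p-1$ vanishes at no more than six points of $\F$. Hence $d_3(I_b)\ne0$ for at least $p-6$ values of $b\in\F$.

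Second, I will translate a nonzero defect into a prime above a suitable $\ell$. By additivity of defects, $d_3(I_b)=\sum_{\mathfrak q}d_3(\mathfrak q)$, where the sum runs over the prime ideals placed in ray $b$, counted with multiplicity. By~\eqref{eq:frobenius} with $j=3$, which is allowed since $3\le p-2$, any $\mathfrak q$ above $\ell$ with $\ell^3\not\equiv1\pmod p$ has $d_3(\mathfrak q)=0$. So each generic ray with $d_3(I_b)\ne0$ contains at least one placed conjugate of some $\mathfrak q_\ell$ with $\ell\mid m$ and $\ell^3\equiv1\pmod p$.

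Third, I will count. The rays are distinct, so the total number of such placed ideals is at least $p-6$. By Definition~\ref{def:allocation}, at most $\lfloor v_\ell(m)/\ord_p(\ell)\rfloor$ conjugates above each $\ell$ are placed in total. Summing over the $\ell$ with $\ell^3\equiv1\pmod p$ gives~\eqref{eq:prime-support}.

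For the final sentence, Theorem~\ref{thm:scalar} shows that $\pm p^{p+3}m\in S(G)$ forces a principal allocation that passes $\Ltwo$, and the argument above then applies.

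The only delicate point is the use of the exact degree six of $D_3$. This should come directly from the rejection rule of $\Ltwo$, which requires $\deg D_3=6$, so I do not expect a real obstacle.
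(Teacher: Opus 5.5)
Your proposal is correct and follows the paper's proof step by step: $\deg D_3=6$ from the rejection rule of $\Ltwo$, so $D_3(b)\ne0$ for at least $p-6$ values of $b$; additivity of defects and the Frobenius relation~\eqref{eq:frobenius} then give, in each such ray, a placed ideal above some $\ell$ with $\ell^3\equiv1\pmod p$; and you finish by counting against $\lfloor v_\ell(m)/\ord_p(\ell)\rfloor$ and invoking Theorem~\ref{thm:scalar}. The only difference is cosmetic: you restrict to principal allocations, which is unnecessary because the argument uses only the defect data that Definition~\ref{def:allocation} attaches to every allocation.
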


\begin{proof}
If $\Ltwo$ accepts, then $D_3$ has degree six by~\eqref{eq:affine-cubic},
so $D_3(b)\ne0$ for at least $p-6$ values of $b\in\F$. By additivity of the defects, every generic ray with
$D_3(b)\ne0$ contains an allocated prime ideal $\mathfrak q$ with
$d_3(\mathfrak q)\ne0$, and then $\ell^3\equiv1\pmod p$ for the prime
$\ell$ below $\mathfrak q$, by the Frobenius relation~\eqref{eq:frobenius}.
Each allocated ideal lies in exactly one ray, and at most
$\lfloor v_\ell(m)/\ord_p(\ell)\rfloor$ ideals above $\ell$ are
allocated. This proves~\eqref{eq:prime-support}, and the last assertion
follows from Theorem~\ref{thm:scalar}.
\end{proof}

At valuation $p+4$ the alternative test $\Lthree$ allows prime
cofactors. We call a prime $\ell\ne p$ \emph{admitted} if
$\pm p^{p+4}\ell\in S(G)$; the next theorem identifies the admitted
primes. For a principal prime
ideal $\mathfrak q$ prime to $\p$, the defect $d_j(\mathfrak q)$ equals
the odd coordinate $c_j$ of every generator.

\begin{theorem}[Prime cofactors]\label{thm:prime-cofactors}
Let $p\ge11$ be regular and let $\ell\ne p$ be a rational prime. Then
$\pm p^{p+3}\ell\notin S(G)$. Moreover, $\pm p^{p+4}\ell\in S(G)$ if
and only if $\ell\equiv1\pmod p$ and a prime ideal $\mathfrak q$ above
$\ell$ is principal with
\begin{equation}\label{eq:prime-defect-criterion}
 d_3(\mathfrak q)=d_5(\mathfrak q)=\cdots=d_{p-6}(\mathfrak q)=0
 \quad\text{and}\quad
 \bigl(d_{p-4}(\mathfrak q),d_{p-2}(\mathfrak q)\bigr)\ne(0,0).
\end{equation}
The condition does not depend on the prime ideal chosen above $\ell$,
and every admitted prime has an explicit realization.
\end{theorem}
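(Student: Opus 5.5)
The plan is to read everything off the scalar criterion, Theorem~\ref{thm:scalar}, specialised to the cofactor $m=\ell$. An allocation for $\ell$ places at most $\lfloor 1/\ord_p(\ell)\rfloor$ conjugates of $\mathfrak q_\ell$. So it is empty unless $\ell\equiv1\pmod p$, in which case it places a single prime ideal $\mathfrak q$ of norm $\ell$ in one ray.

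\emph{Valuation $p+3$.} By Proposition~\ref{prop:prime-support}, acceptance by $\Ltwo$ needs the left side of \eqref{eq:prime-support} to be at least $p-6\ge5$. For $m=\ell$ that left side is at most $1$. Hence $\pm p^{p+3}\ell\notin S(G)$. The same argument shows that $\Ltwo$ rejects every allocation for $\ell$, so at valuation $p+4$ only $\Lthree(D,\ell\bmod p)$ can accept.

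\emph{Valuation $p+4$: reduction to one placed ideal.} If $\ell\not\equiv1\pmod p$, then $\ell$ is norm-free, and Theorem~\ref{thm:tail}(b) with $N=p+4$ excludes $\pm p^{p+4}\ell$ with no hypothesis on $h_p^+$. So assume $\ell\equiv1$, hence $\tau=1$. The empty allocation is still excluded: an $\Lthree$-solution with unit cofactors would fall under Lemma~\ref{lem:parity}, which forces $p^2\mid s$, i.e.\ $\tau=0$. Thus exactly one principal prime ideal $\mathfrak q$ is placed, either in the exceptional ray or in a generic ray $b_0$.

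\emph{The two placements.} In the generic case the defect functions are $D_j(b)=d_j(\mathfrak q)\,\mathbf 1_{b=b_0}$, and $\mathbf 1_{b=b_0}=1-(b-b_0)^{p-1}$ has degree $p-1$.
\begin{itemize}
\item The condition $\deg D_3\le4$ forces $d_3(\mathfrak q)=0$.
\item I would then run the recurrence \eqref{eq:affine-recurrence} inductively in $j$. The term $d_j\mathbf 1_{b_0}$ enters $h_{j+1}$ linearly, plus contributions of lower order in $T$, and its top monomial $b^{p-1}$ cannot be cancelled by the unknowns $X_{2i}$ of degree $\le 2i+1$ as long as $j+1\le p-5$. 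The equations \eqref{eq:affine-generic-equations} should therefore force $d_5=\cdots=d_{p-6}=0$.
\item Once these vanish, I expect the system to reduce to the unit-only solution, namely Proposition~\ref{prop:construction} with $r=1$, perturbed only in the two top odd degrees $p-4$ and $p-2$.
\item The constant equation in \eqref{eq:affine-r3-special} would then read $1+c_1 d_{p-4}+c_2 d_{p-2}=0$. Here $d_{p-2}$ enters through $\sum_b D_{p-2}(b)=-d_{p-2}$, and $d_{p-4}$ enters through $h_{p-3}$ via the choice of the free top even unknown and the special equations.
\item The branch parameter $a\in\F^\times$, and the free scaling from $\sigma_t$, which multiplies $d_j$ by $t^j$, should let one solve this equation exactly when $(d_{p-4},d_{p-2})\ne(0,0)$.
\end{itemize}
In the exceptional case, the visible defects $d_{\infty,j}$ with $j\le p-4$ enter the special equations through $Q_\infty$, and $d_{\infty,p-2}$ is invisible. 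I would check that this placement yields the same condition, or a weaker one that is already implied, so that it adds nothing.

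\emph{Conclusion.} Independence from the choice of $\mathfrak q$ follows from two facts: Galois conjugation preserves principality, and Lemma~\ref{lem:ideals} gives $d_j(\sigma_t\mathfrak q)=t^jd_j(\mathfrak q)$, which preserves the vanishing pattern in \eqref{eq:prime-defect-criterion}. Explicit realizations come from Theorem~\ref{thm:placed}.

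The main obstacle is the generic-ray computation in the third step. One must show that the degree-$(p-1)$ indicator forces the low defects to vanish despite the freedom in the $X_{2i}$, and then compute exactly how $d_{p-4}$ and $d_{p-2}$ enter the constant equation. I would first do this carefully at $p=11$, where only $d_3,d_5,d_7,d_9$ occur, and then generalise the bookkeeping.
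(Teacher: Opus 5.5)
\textbf{There is a genuine gap: the roles of the two placements are reversed.} Your handling of valuation $p+3$, of $\ell\not\equiv1\pmod p$ via Theorem~\ref{thm:tail}(b) (the paper instead excludes the empty allocation directly), of the empty allocation via Lemma~\ref{lem:parity}, and of independence from $\mathfrak q$ is sound. The problem is in the two placements.

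\emph{The generic ray.} With $q=0$ and $d_3=\cdots=d_{j-2}=0$, Lemma~\ref{lem:affine-triangular} writes $H_b/T=K_{0,D}\,E$ with $E$ even. The coefficient of the odd degree $j$ is then exactly $D_j$. The unknowns $X_{2i}$ enter only through $E$ and contribute nothing to it, so there is nothing to cancel. Hence $h_{j+1}=d_j\mathbf 1_{b_0}$ has degree $p-1$ unless $d_j=0$. The bound $\deg h_{j+1}\le j+1$ is imposed by \eqref{eq:affine-generic-equations} for every even $j+1\le p-3$. So the induction runs through $j=p-4$ and forces $d_{p-4}=0$ too, not only $d_3,\dots,d_{p-6}$. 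Consequently $d_{p-4}$ never enters the constant equation in this placement. Since $h_{\infty,p-1}=0$ and $\sum_b\mathbf 1_{b_0}(b)=+1$ (not $-1$), that equation is simply $d_{p-2}=-\tau=-1$. The generic ray therefore realizes only primes with $d_3=\cdots=d_{p-4}=0$ and $d_{p-2}\ne0$. Your expected relation $1+c_1d_{p-4}+c_2d_{p-2}=0$ is false.

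\emph{The exceptional ray.} It is not redundant: it is the only way to realize primes with $d_{p-4}(\mathfrak q)\ne0$, and you need it for both directions.
\begin{enumerate}
\item All generic defects vanish, so the generic transforms are odd and $t_{2k}=0$.
\item Hence the relative series $H_\infty/(aT^3)$, with $a=t_3\ne0$, has vanishing coefficients in the odd degrees $1,3,\dots,p-6$. Its logarithm then gives $d_3=\cdots=d_{p-6}=0$.
\item Its top visible coefficient $h_{\infty,p-1}/a$ equals $d_{p-4}$, so the constant moment condition reads $1+a\,d_{p-4}=0$.
\item This is solvable, with $a=-d_{p-4}^{-1}$, exactly when $d_{p-4}\ne0$; the coordinate $d_{p-2}$ is invisible here.
\end{enumerate}

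Only the union of the two placements gives \eqref{eq:prime-defect-criterion}. As written, your plan asserts a false solvability statement for the generic ray and leaves the necessity analysis of the exceptional ray undone.
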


\begin{proof}
For a prime cofactor the left side of~\eqref{eq:prime-support} is at
most one, whereas $p-6>1$. Hence no allocation passes $\Ltwo$, at either
critical valuation, and it remains to analyze $\Lthree$ at valuation
$p+4$.

An allocation for $\ell$ is empty unless $\ell\equiv1\pmod p$, in which
case it places at most one prime ideal $\mathfrak q$ above $\ell$ into
one ray, and $\mathfrak q$ must then be principal. The empty allocation
cannot pass $\Lthree$. Indeed, after the normalization $q=0$ the generic
relative series $H_b/T$ have even logarithms, so every even coefficient
of $H_b$ vanishes; in particular $h_4=0$, so $h_{\infty,4}=[b^4]h_4=0$
and the linear logarithmic coordinate of the exceptional relative series
vanishes as well. Its odd coordinates also vanish, because the
allocation is empty, so $H_\infty/T^3$ is even. All coefficients of
degree $p-1$ are then zero, which contradicts the constant moment
condition, since then $s=p\ell$ and $\tau\equiv\ell\not\equiv0\pmod p$.

Suppose first that $\mathfrak q$ is placed in a generic ray $b_0$. Write
$d_j=d_j(\mathfrak q)$ and $\mathbf 1_{b_0}(b)=1-(b-b_0)^{p-1}$, the indicator function of $b_0$, so
that
$D_j=d_j\mathbf 1_{b_0}$ and all exceptional defects vanish. The canonical
polynomial $\mathbf 1_{b_0}$ has degree $p-1$. With $q=0$, the first even
generic coefficient is $h_4=D_3$, and its degree bound forces $d_3=0$.
More generally, if the odd defects below an odd index $j$ vanish, then
$H_b/T$ is the product of an even series with constant coefficient one
and a series whose first odd term is $D_jT^j$, so $h_{j+1}=D_j$. For
$j=3,5,\ldots,p-4$ the required degree of $h_{j+1}$ is at most
$j+1\le p-3<p-1$, and induction gives $d_3=\cdots=d_{p-4}=0$. The only
remaining even generic coefficient is $h_{p-1}=d_{p-2}\mathbf 1_{b_0}$.
The exceptional series again has vanishing linear and odd logarithmic
coordinates, so $h_{\infty,p-1}=0$. All of $\ell$ is now allocated, so
$s=p$ and $\tau=1$, and the constant moment condition requires
\begin{equation}\label{eq:prime-finite-top}
 d_{p-2}=-1.
\end{equation}
In particular this coordinate is nonzero. Galois conjugation multiplies
it by $t^{p-2}=t^{-1}$, so every nonzero value can be moved to the
value in~\eqref{eq:prime-finite-top}.

If instead $\mathfrak q$ is placed at infinity, every generic odd defect
vanishes. The generic transforms are odd, and all exceptional
coefficients of even degree up to $p-3$ must vanish. Put
$a=h_{\infty,3}\ne0$. The relative series $H_\infty/(aT^3)$ then has
vanishing coefficients in the odd degrees $1,3,\ldots,p-6$, and so does
its logarithm. Thus $d_3=\cdots=d_{p-6}=0$, and in the highest visible
odd degree the raw coefficient equals $d_{p-4}$. The constant moment
condition reads
\begin{equation}\label{eq:prime-exceptional-top}
 1+a\,d_{p-4}=0,
\end{equation}
which is possible only if $d_{p-4}\ne0$; the coordinate $d_{p-2}$ is
invisible at infinity. The two placements together give exactly the
necessary condition~\eqref{eq:prime-defect-criterion}.

For sufficiency we exhibit compatible data. If
$\beta=d_{p-4}(\mathfrak q)\ne0$, place $\mathfrak q$ at infinity, put
$a=-\beta^{-1}$, and prescribe
\begin{equation}\label{eq:prime-exceptional-packet}
 H_b=T+ab^3T^3,\qquad H_\infty=aT^3+a\beta T^{p-1},\qquad s=p.
\end{equation}
The generic relative logarithms are even. The exceptional relative
series is $1+\beta T^{p-4}$, whose only visible odd logarithmic term is
$\beta T^{p-4}$. The nonconstant moment conditions hold, because $h_1=1$
and $h_3=ab^3$ have the required degrees, their coefficients
$[b^n]h_n$ equal $h_{\infty,n}$, and all other coefficients through
degree $p-2$ vanish; the constant
moment condition holds because $a\beta=-1$.

If $d_{p-4}=0$ and $d_{p-2}\ne0$, replace $\mathfrak q$ by its conjugate
under $\sigma_t$ with $t=-d_{p-2}$, so that its last defect becomes
$-1$, and place this conjugate in the ray $b_0=0$. For an arbitrary
$a\in\F^\times$, prescribe
\begin{equation}\label{eq:prime-finite-packet}
 H_b=T+ab^3T^3-\mathbf 1_0(b)T^{p-1},\qquad H_\infty=aT^3,\qquad s=p.
\end{equation}
The generic relative logarithm has no odd term below degree $p-2$, and
its coefficient of $T^{p-2}$ is $-\mathbf 1_0(b)$: dividing by $1+ab^3T^2$
does not change this top term at the retained precision. The exceptional
relative series is constant. The nonconstant moment conditions are as
in~\eqref{eq:prime-exceptional-packet}, and the top coefficients sum to
$-1$ because $\sum_b\mathbf 1_0(b)=1$, so the constant moment condition
also holds.

In both cases the local law~\eqref{eq:local-law} and
Lemma~\ref{lem:units} lift the series with exactly the prescribed
principal cofactor ideal, and Lemmas~\ref{lem:moments}
and~\ref{lem:reconstruct} produce integer group coefficients; the
explicit lift is~\eqref{eq:affine-unit-recovery}--\eqref{eq:affine-ray-lift}.
The augmentation is $p$, the generic valuations are one, the exceptional
valuation is three, and the cofactor norms multiply to $\ell$, so the
determinant is $p^{p+4}\ell$; negation gives the negative value.
Finally, Galois conjugation multiplies each $d_j$ by $t^j\ne0$ and
preserves principality, and all prime ideals above a split prime $\ell$
are conjugate. Hence the criterion does not depend on the chosen ideal.
\end{proof}

The odd coordinates also describe the ray class group, in the sense of
class field theory, that governs the distribution of these primes. Its abstract structure is known: for the
modulus $pR$ it is isomorphic to $\Cl(R)\times(\Z/p\Z)^{(p-3)/2}$ by
\citet[arXiv version, Proposition~3.5 and the case $k=p-1$ of
Lemma~3.8]{Hoelscher2010}. The following lemma makes the isomorphism
explicit in the coordinates used by the determinant criterion. Let
$\Cl_{pR}(K)$ be the ray class group of $K$ modulo $pR=\p^{p-1}$; there
is no infinite part, since $K$ is totally imaginary. The defects extend
additively to fractional ideals prime to $\p$; equivalently, the
definition~\eqref{eq:ideal-defects} may be applied to a generator that is a unit at
$\p$.

\begin{lemma}[Ray class coordinates]\label{lem:prime-ray-class}
For a regular prime $p\ge7$, the map
\begin{equation}\label{eq:prime-ray-isomorphism}
 \Cl_{pR}(K)\longrightarrow\Cl(R)\times\F^{(p-3)/2},\qquad
 [I]_{pR}\longmapsto\bigl([I],d_3(I),d_5(I),\ldots,d_{p-2}(I)\bigr),
\end{equation}
is an isomorphism. It is equivariant for $\sigma_t$, which acts as usual
on $\Cl(R)$ and by multiplication by $t^j$ on the coordinate $d_j$. In
particular, $|\Cl_{pR}(K)|=hp^{(p-3)/2}$.
\end{lemma}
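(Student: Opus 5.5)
The plan is to use the standard exact sequence for ray class groups and to identify each piece with the coordinates the paper already uses. For the modulus $\mathfrak m=pR=\p^{p-1}$ there is an exact sequence
\[
 1\to R^\times\to (R/pR)^\times\to \Cl_{pR}(K)\to \Cl(R)\to 1,
\]
where the middle map sends a residue $\bar y$ to the class of $yR$ for any $y\in R$ prime to $\p$ lifting it. This is valid because $K$ is totally imaginary, so there is no infinite part. The image of $R^\times$ is $\Ehat$. Lemma~\ref{lem:units}, final assertion, says that for regular $p$ the quotient $(R/pR)^\times/\Ehat$ is identified with $\F^{(p-3)/2}$ through the odd coordinates $c_3,c_5,\ldots,c_{p-2}$. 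Counting then gives $|\Cl_{pR}(K)|=hp^{(p-3)/2}$, and it remains to show that the explicit map~\eqref{eq:prime-ray-isomorphism} is a well-defined isomorphism.

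\emph{Well-definedness and homomorphism.} First extend the defects additively to fractional ideals prime to $\p$. If $I$ and $J$ lie in the same ray class, then $I=\alpha J$ with $\alpha\equiv1\pmod^\times pR$. Then $[I]=[J]$, and $d_j(I)-d_j(J)=d_j(\alpha R)=c_j(\alpha)=0$, since the residue of $\alpha$ is $1$. Here I use that for principal $zR$ the defects equal $c_j(z)$ (Lemma~\ref{lem:ideals}), extended multiplicatively to quotients of elements prime to $\p$. Additivity of the defects and of the class map shows the map is a homomorphism. Equivariance follows from Lemma~\ref{lem:ideals}, because $c_j(\sigma_ty)=t^jc_j(y)$.

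\emph{Bijectivity.} Since source and target have the same finite order $hp^{(p-3)/2}$, it suffices to prove injectivity. Suppose $[I]$ is trivial in $\Cl(R)$ and all $d_j(I)=0$. Write $I=yR$ with $y$ prime to $\p$ (possibly after scaling by an integer prime to $p$, which is harmless in the fractional setting). Then $c_j(y)=d_j(I)=0$ for all odd $j\ge3$. By the last assertion of Lemma~\ref{lem:units}, the residue of $y$ lies in $\Ehat$, so $y\equiv u\pmod{pR}$ for some unit $u$. Then $I=(yu^{-1})R$ with $yu^{-1}\equiv1\pmod^\times pR$, so $I$ is trivial in $\Cl_{pR}(K)$. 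Surjectivity could also be shown directly: choose $y\in R$ prime to $\p$ with prescribed odd coordinates, then multiply by ideals in each class. The cardinality argument avoids this.

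The only delicate point is to treat fractional ideals and elements that are merely units at $\p$ carefully, so that the $c_j$ make sense for quotients. I would handle this by clearing denominators with rational integers prime to $p$. Such integers have residue in $\F^\times$, so all their logarithmic coordinates vanish.
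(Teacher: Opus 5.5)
Your proof is correct and follows the paper's argument: the map is well defined because generators congruent to one modulo $pR$ have vanishing defects, it is injective by the unit lift of Lemma~\ref{lem:units}, and it is bijective because the exact sequence $R^\times\to(R/pR)^\times\to\Cl_{pR}(K)\to\Cl(R)\to0$ gives the order $hp^{(p-3)/2}$. One small slip: that sequence should not begin with $1\to$, since $R^\times\to(R/pR)^\times$ has infinite kernel, but you only use its image $\Ehat$, so the argument is unaffected.
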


\begin{proof}
A principal ideal generated by an element congruent to one modulo $pR$
has vanishing defects, so the map is well defined; additivity
and equivariance follow from Lemma~\ref{lem:ideals}. If the image of
$[I]_{pR}$ is zero, then $I=zR$ and every odd coordinate of $z$
vanishes. Lemma~\ref{lem:units} provides a global unit $u$ with the same
residue as $z$, so $z/u\equiv1$ modulo $pR$ and $[I]_{pR}=0$. Thus the
map is injective. The exact sequence~\cite[Chapter~V, Theorem~1.7]{MilneCFT}
\[
 R^\times\longrightarrow(R/pR)^\times\longrightarrow\Cl_{pR}(K)
 \longrightarrow\Cl(R)\longrightarrow0
\]
and the order $(p-1)p^{(p-1)/2}$ of the unit image in Lemma~\ref{lem:units}
give $|\Cl_{pR}(K)|=h\,(p-1)p^{p-2}/\bigl((p-1)p^{(p-1)/2}\bigr)
=hp^{(p-3)/2}$, which is the order of the target. Hence the map is an
isomorphism.
\end{proof}

\begin{theorem}[Density of prime cofactors]\label{thm:prime-density}
For a regular prime $p\ge11$, the rational primes $\ell\ne p$ with
$\pm p^{p+4}\ell\in S(G)$ have natural density
\begin{equation}\label{eq:prime-density}
 \frac{p+1}{hp^{(p-3)/2}}
\end{equation}
in the set of all rational primes, and Dirichlet density equal to the
same value.
\end{theorem}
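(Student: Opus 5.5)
The plan is to turn the criterion of Theorem~\ref{thm:prime-cofactors} into a condition on the ray class of $\mathfrak q$ modulo $pR$, and then apply the Chebotarev density theorem, in its natural-density form, to the ray class field of conductor dividing $pR$. By Theorem~\ref{thm:prime-cofactors}, a prime $\ell\ne p$ is admitted exactly when three things hold: $\ell\equiv1\pmod p$; a prime $\mathfrak q$ above $\ell$ is principal; and the vector $(d_3,\ldots,d_{p-2})(\mathfrak q)$ lies in the set
\[
 W=\{d\in\F^{(p-3)/2}: d_3=\cdots=d_{p-6}=0,\ (d_{p-4},d_{p-2})\ne(0,0)\}.
\]
By Lemma~\ref{lem:prime-ray-class}, principality together with $d(\mathfrak q)\in W$ says precisely that the ray class $[\mathfrak q]_{pR}$ lies in the subset $\{1\}\times W$ of $\Cl_{pR}(K)\cong\Cl(R)\times\F^{(p-3)/2}$. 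This subset has $p^2-1$ elements.

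The first step is to pass from rational primes to degree-one primes of $K$. The rational primes $\ell\equiv1\pmod p$ are exactly those that split completely in $K$, each into $p-1$ primes $\mathfrak q$ of norm $\ell$. Primes of $K$ of higher residue degree, together with the ramified prime, contribute density zero among the prime ideals of $K$ ordered by norm.

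The second step applies Chebotarev to the ray class field $K_{pR}$, whose Galois group over $K$ is $\Cl_{pR}(K)$ by Artin reciprocity. The prime ideals of $K$ with ray class in a fixed set $S$ then have natural density $|S|/|\Cl_{pR}(K)|$. Counting by norm, the number of degree-one $\mathfrak q$ with $N\mathfrak q\le x$ and class in $\{1\}\times W$ is therefore
\[
 \sim\frac{p^2-1}{hp^{(p-3)/2}}\,\pi(x).
\]

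The third step is the translation back to rational primes. The criterion does not depend on the prime chosen above $\ell$: by the equivariance in Lemma~\ref{lem:prime-ray-class}, $\{1\}\times W$ is stable under the $\sigma_t$. So each admitted $\ell$ contributes exactly $p-1$ qualifying ideals $\mathfrak q$, and each non-admitted $\ell$ contributes none. Dividing the count by $p-1$ gives the number of admitted $\ell\le x$ as
\[
 \sim\frac{p^2-1}{(p-1)hp^{(p-3)/2}}\pi(x)=\frac{p+1}{hp^{(p-3)/2}}\pi(x).
\]
Dirichlet density then follows from natural density. One can also argue with Dirichlet density directly, via the $L$-function form of Chebotarev applied to the characters of $\Cl_{pR}(K)$.

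The main obstacle is purely bookkeeping, and I would handle it in this order. First, confirm that the condition on $\ell$ is a union of full ray classes. This uses the isomorphism of Lemma~\ref{lem:prime-ray-class}: principality alone is not a ray-class condition, but membership in $\{1\}\times W$ is, since the defects are ray-class invariants. Second, check that the orbits of the $\sigma_t$ on the primes above $\ell$ have full size $p-1$, so that the factor $(p-1)^{-1}$ is exact. This holds because $\ell$ splits completely, so the decomposition group is trivial and its $p-1$ primes are distinct.
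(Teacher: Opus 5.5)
Your argument is correct, but it takes a different route from the paper's.

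\textbf{The paper's route.} The paper works over $\Q$. It first shows that the ray class field $L$ of $K$ modulo $pR$ is Galois over $\Q$, because the modulus and its principal ray group are $\Gal(K/\Q)$-stable. It then identifies conjugation on $A=\Gal(L/K)$ with the $\sigma_t$-action of Lemma~\ref{lem:prime-ray-class}. Next it checks that the set $\mathcal C$ of $p^2-1$ classes is stable under conjugation in $\Gamma=\Gal(L/\Q)$, and notes that the Frobenius of $\ell$ lies in $A$ exactly when $\ell\equiv1\pmod p$. A single application of Chebotarev over $\Q$ then finishes the proof. The factor $p-1$ appears automatically through $|\Gamma|=(p-1)hp^{(p-3)/2}$, and natural and Dirichlet density come out together.

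\textbf{Your route.} You stay over $K$ and need only the abelian case: equidistribution of prime ideals among ray classes, which is Hecke's theorem. You combine it with three further ingredients:
\begin{itemize}
\item the prime ideal theorem $\pi_K(x)\sim\pi(x)$;
\item the negligibility of primes of residue degree at least two;
\item the exact $(p-1)$-to-one count, which comes from the $\sigma_t$-stability of $\{1\}\times W$ and complete splitting.
\end{itemize}
This avoids proving that $L/\Q$ is Galois and describing the conjugation action on $A$. The cost is a short counting step and a separate remark that natural density implies Dirichlet density, which the paper gets directly.

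\textbf{A small correction.} In your last paragraph you say that principality alone is not a ray-class condition. It is one: principality is the preimage of the trivial class under the surjection $\Cl_{pR}(K)\to\Cl(R)$. What Lemma~\ref{lem:prime-ray-class} adds is that the defect conditions are also ray-class invariants. Hence the whole criterion is a union of exactly $p^2-1$ ray classes. This slip does not affect the validity of your proof.
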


\begin{proof}
Let $L$ be the ray class field of $K$ modulo $pR$. Class field theory
identifies $A=\Gal(L/K)$ with $\Cl_{pR}(K)$ through the Artin map~\cite[Chapter~V, Theorems~3.5--3.6 and Corollary~3.7]{MilneCFT}. The
modulus and its group of principal ideals generated by elements
congruent to one are stable under $\Gal(K/\Q)$, so conjugation carries
the Artin kernel to itself, and uniqueness in the class field
correspondence gives $\sigma(L)=L$ for every automorphism extending an
element of $\Gal(K/\Q)$. Thus $L/\Q$ is Galois, and its group $\Gamma$
has order $(p-1)hp^{(p-3)/2}$. Conjugation on the abelian normal subgroup
$A$ factors through $\Gamma/A=\Gal(K/\Q)$ and corresponds, under the
Artin map, to the action in Lemma~\ref{lem:prime-ray-class}; no
splitting of the extension of groups is needed.

Let $\mathcal C\subset A$ consist of the classes with trivial ordinary
class, vanishing defects through degree $p-6$, and last two coordinates
not both zero. This set has $p^2-1$ elements. The Galois action
preserves it, acting on the last pair by
$(x,y)\mapsto(t^{p-4}x,t^{p-2}y)=(t^{-3}x,t^{-1}y)$, so $\mathcal C$ is
stable under conjugation in $\Gamma$. Every rational prime $\ell\ne p$
is unramified in $L$, and its Frobenius lies in $A$ exactly when $\ell$
splits in $K$, that is, when $\ell\equiv1\pmod p$. In that case the
Frobenius conjugacy class of $\ell$ is the orbit of the Artin symbol of
a prime $\mathfrak q\mid\ell$, which has residue degree one. By
Theorem~\ref{thm:prime-cofactors}, the admitted primes are exactly
those whose Frobenius lies in $\mathcal C$. The Chebotarev density
theorem for a conjugation-stable subset~\cite[Chapter~V, Theorem~3.23 and Remark~3.26(d); Chapter~VIII,
Theorem~7.4]{MilneCFT} gives the density
\[
 \frac{|\mathcal C|}{|\Gamma|}=\frac{p^2-1}{(p-1)hp^{(p-3)/2}}
 =\frac{p+1}{hp^{(p-3)/2}}.
\]
The factor $p-1$ accounts for the splitting of $\ell$ in $K$; no further
factor arises from the choice of $\mathfrak q$.
\end{proof}

The modulus $pR$ in Lemma~\ref{lem:prime-ray-class} cannot be replaced
by $\p^{p-2}$. Indeed, the element $1+\varpi^{p-2}$ is
congruent to one modulo $\p^{p-2}$, but its last odd coordinate modulo
$pR$ is $-1$, so by Lemma~\ref{lem:prime-ray-class} its principal ideal
is nontrivial in $\Cl_{pR}(K)$. The ray class field used above therefore
has conductor exactly $pR$~\cite[Chapter~V, Remark~3.8]{MilneCFT}. The
orbits in $\mathcal C$ are not all of the same size: with
$g=\gcd(3,p-1)$, the pairs with $y\ne0$ form $p$ orbits of size $p-1$,
and the pairs with $y=0$ and $x\ne0$ form $g$ orbits of size $(p-1)/g$.
Their total size is $p^2-1$, which is the count used
in~\eqref{eq:prime-density}.

Although the admitted primes have positive density, none of them is
small. The reason is archimedean: the vanishing of the lower defects
forces a generator to be congruent to its complex conjugate modulo a
high power of $\p$.

\begin{proposition}[Size of prime cofactors]\label{prop:prime-size}
Let $p\ge11$ be regular. Every prime $\ell$ with
$\pm p^{p+4}\ell\in S(G)$ satisfies
\[
 \ell>\frac{p^{p-4}}{2^{p-1}}.
\]
\end{proposition}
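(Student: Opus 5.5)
The plan is to compare a generator of the prime ideal above $\ell$ with its complex conjugate. Their difference will be a nonzero element divisible by a high power of $\p$, and comparing its norm with an archimedean upper bound gives the inequality.

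\textbf{Setup.} By Theorem~\ref{thm:prime-cofactors}, an admitted $\ell$ satisfies $\ell\equiv1\pmod p$. Moreover, some prime ideal $\mathfrak q\mid\ell$ is principal, say $\mathfrak q=yR$, and satisfies~\eqref{eq:prime-defect-criterion}. Multiplying $y$ by $\zeta^k$ adds $k$ to $c_1(y)$ and leaves the odd coordinates $c_j$ with $j\ge3$ unchanged (Lemma~\ref{lem:units}(i)). So we may assume $c_1(y)=0$, and then $c_3(y)=\cdots=c_{p-6}(y)=0$ by Lemma~\ref{lem:ideals}.

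\textbf{Congruence step.} Complex conjugation replaces $T$ by $-T$, so $c_j(\bar y)=(-1)^jc_j(y)$. The residues $y(0)$ and $\bar y(0)$ agree, since both are the common residue of $y$ and $\bar y$ in $R/\p=\F$. Hence
\[
 \log\frac{y(T)}{\bar y(T)}=2\sum_{j\ \text{odd}}c_j(y)T^j\equiv0\pmod{T^{p-4}}.
\]
Thus $y(T)\equiv\bar y(T)\pmod{T^{p-4}}$. Through the identification $R/pR=\F[T]/(T^{p-1})$, in which $T$ is a uniformizer times a unit, this means $y-\bar y\in\p^{p-4}$. We next show $y\ne\bar y$. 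If $y$ were real, every odd coordinate of $y$ would vanish, in particular $d_{p-4}(\mathfrak q)$ and $d_{p-2}(\mathfrak q)$, contradicting~\eqref{eq:prime-defect-criterion}. Hence $y-\bar y$ is a nonzero element of $\p^{p-4}$, and
\[
 p^{p-4}\mid\Nm(y-\bar y).
\]

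\textbf{Archimedean step.} For every embedding $\sigma$ we have $|\sigma(\bar y)|=|\sigma(y)|$, because complex conjugation commutes with $\sigma$ on the CM field $K$. Hence $|\sigma(y-\bar y)|\le2|\sigma(y)|$. Taking the product over the $p-1$ embeddings, and using $\Nm(\mathfrak q)=\ell$ because $\ell$ splits, gives
\[
 p^{p-4}\le\Nm(y-\bar y)\le2^{p-1}\ell.
\]

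\textbf{Strictness.} Equality would force $\sigma(\bar y)=-\sigma(y)$ for every $\sigma$, that is, $\bar y=-y$. Then $y-\bar y=2y$ has norm $2^{p-1}\ell$, which is prime to $p$ and cannot be divisible by $p^{p-4}$. This gives the strict inequality $\ell>p^{p-4}/2^{p-1}$.

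\textbf{Main obstacle.} The delicate step is the precision bookkeeping. One has to confirm that the vanishing of the logarithmic coordinates through degree $p-5$, together with the agreement of constant terms, translates exactly into membership in $\p^{p-4}$, and that the normalization $c_1=0$ by a root of unity is legitimate and does not disturb the higher odd coordinates.
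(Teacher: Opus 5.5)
Your proposal is correct and follows essentially the same route as the paper: normalize $c_1$ by a power of $\zeta$, show that $y$ is congruent to $\bar y$ modulo $\p^{p-4}$, and compare $p^{p-4}\mid\Nm(y-\bar y)$ with the archimedean bound $2^{p-1}\ell$. There are two minor differences. You obtain $y\ne\bar y$ from $(d_{p-4},d_{p-2})\ne(0,0)$, whereas the paper uses $\mathfrak q\ne\overline{\mathfrak q}$ for a split prime. You also get strictness from the equality case, whereas the paper simply notes that $p^{p-4}$ is odd and $2^{p-1}\ell$ is even. Both variants are valid.
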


\begin{proof}
By Theorem~\ref{thm:prime-cofactors}, $\ell\equiv1\pmod p$ and a prime
ideal $\mathfrak q\mid\ell$ has a generator $y$ with
$c_3(y)=\cdots=c_{p-6}(y)=0$. Choose an integer $k$ with
$k\equiv c_1(y)\pmod p$ and put $w=\zeta^{-k}y$. Then $c_1(w)=0$ and
$c_j(w)=c_j(y)$ for $j\ge2$. Let $\overline w$ denote the complex
conjugate of $w$. Complex conjugation fixes residues modulo $\p$ and
replaces $T$ by $-T$, so $\overline w(0)=w(0)$ and
$c_j(\overline w)=(-1)^jc_j(w)$. Hence the residue of $w/\overline w$
modulo $pR$ is
\[
 \exp\Bigl(2\sum_{j\ \mathrm{odd}}c_j(w)T^j\Bigr)
 =1+2c_{p-4}(w)T^{p-4}+2c_{p-2}(w)T^{p-2},
\]
and therefore $w-\overline w\in\p^{p-4}$. Since $\ell$ splits completely
in $K$, the ideal $\mathfrak q$ differs from its complex conjugate, so
$w\ne\overline w$. Thus $w-\overline w$ is a nonzero element of
$\p^{p-4}$, and $p^{p-4}$ divides $\Nm(w-\overline w)$. On the other
hand, for every complex embedding $\sigma$ of $K$ we have
$\sigma(\overline w)=\overline{\sigma(w)}$, so
$|\sigma(w-\overline w)|\le2|\sigma(w)|$. Taking the product over the
$p-1$ embeddings gives $\Nm(w-\overline w)\le2^{p-1}\Nm(w)=2^{p-1}\ell$.
Hence $p^{p-4}\le2^{p-1}\ell$, and equality is impossible because the
left side is odd.
\end{proof}

For $p=11$ and $p=13$ the bound is approximately $1.9\cdot10^4$ and
$2.6\cdot10^6$. Section~\ref{sec:examples} shows that the least admitted
primes are $77419$ and $6285241$, and that the bound explains why the
counts of admitted primes approach the density~\eqref{eq:prime-density}
slowly.

\section{Examples, numerical checks and remarks}\label{sec:examples}

Apart from the published computations
of~\cite{HartHarveyOng2017} (Vandiver's conjecture below $2^{31}$ and the
irregular pairs), which are used in Theorem~\ref{thm:intro-tail}(c) and
Corollary~\ref{cor:intro-p5}, the theorems above are proved without
computer assistance. This section
records examples that illustrate them, together with independent
numerical checks. The scripts and their outputs are part of the
supplementary material. The Python scripts (Python~3.13) use exact
integer arithmetic, NumPy for arithmetic modulo $p^2$, and the FLINT
library through \texttt{python-flint}~0.9.0 for Bernoulli numbers and
power series modulo $p$. The Magma scripts (Magma~V2.29-6) compute
generators of prime ideals and the exact determinant in the example
for $p=11$ below.

\subsection*{The least full tail at irregular primes}

Table~\ref{tab:tail} lists the irregular primes below $400$, their
irregular pairs, the integer $r_p$ defined in the introduction,
and the excess $e(p)-p=3+2r_p$. All these primes satisfy
$p\nmid h_p^+$ and $r_p\le(p-5)/2$, so Theorem~\ref{thm:intro-tail}(c)
applies.

\begin{table}[ht]
\centering
\caption{The least full-tail exponent $e(p)=p+3+2r_p$ at the irregular
primes $p<400$.}\label{tab:tail}
\begin{tabular}{rlrr@{\qquad}rlrr}
\toprule
$p$ & $2i$ & $r_p$ & $e(p)-p$ & $p$ & $2i$ & $r_p$ & $e(p)-p$\\
\midrule
37 & 32 & 1 & 5 & 263 & 100 & 3 & 9\\
59 & 44 & 1 & 5 & 271 & 84 & 4 & 11\\
67 & 58 & 1 & 5 & 283 & 20 & 3 & 9\\
101 & 68 & 1 & 5 & 293 & 156 & 4 & 11\\
103 & 24 & 5 & 13 & 307 & 88 & 3 & 9\\
131 & 22 & 2 & 7 & 311 & 292 & 1 & 5\\
149 & 130 & 1 & 5 & 347 & 280 & 1 & 5\\
157 & 62, 110 & 2 & 7 & 353 & 186, 300 & 2 & 7\\
233 & 84 & 4 & 11 & 379 & 100, 174 & 4 & 11\\
257 & 164 & 3 & 9 & 389 & 200 & 3 & 9\\
\bottomrule
\end{tabular}
\end{table}

By definition, $r_p$ is the least $r$ that divides no irregular
half-index $i\le r(p-1)/(2r+1)$, and $r(p-1)/(2r+1)$ lies between
$(p-1)/3$ and $(p-1)/2$. At $p=103$ the
half-index $12$ is divisible by $1,2,3,4$, which forces $r_{103}=5$; at
$p=131$ the half-index $11$ is odd, so $r_{131}=2$; and at $p=283$ the
half-index $10$ is divisible by $1$ and $2$ but not by $3$, so
$r_{283}=3$. The largest value of $r_p$ for $p<400$ is $r_{103}=5$.

As an independent check of Proposition~\ref{prop:construction} we
constructed the transforms~\eqref{eq:tail-data} at $p=37$, $59$, $67$,
$101$ (with $r=1$), at $p=131$ and $157$ (with $r=2$), and at $p=103$
(with $r=5$). For each of these primes we determined exponents of the circular units
$u_2,\ldots,u_{(p-1)/2}$ that realize the required unit residues,
computed the resulting character values in
$\Z[\zeta_p]/p^2\Z[\zeta_p]$, and checked all $p^2$ reconstruction
congruences~\eqref{eq:reconstruction-congruence} for their rays modulo
$p$ ($10609$ congruences at $p=103$). By Lemma~\ref{lem:reconstruct}
this certifies, for instance, that every multiple of $103^{116}$ is an
integer group determinant of $C_{103}\times C_{103}$. The computation
also illustrates the obstruction. For $p=103$ and $r=1,2,3,4$, the
polynomial $P$ that removes the logarithmic coordinate at $T^{24}$
allows the required residues to be realized by circular units, but the
resulting generic transforms violate exactly one moment condition,
namely $\deg h_{25}\le25$: the coefficient $h_{25}$ has degree $36$,
$30$, $28$ and $27$ in $b$, respectively, which is $(2r+1)\cdot12/r$,
in accordance with Lemma~\ref{lem:obstruction}. Accordingly, the
reconstruction congruences fail exactly at the $(p-1)^2=10404$ cells
$(i,j)$ with $ij\ne0$. The same happens at $p=131$ and $p=157$ for
$r=1$, where $h_{23}$ has degree $33$ and $h_{63}$ has degree $93$,
respectively.

The excess $e(p)-p$ exceeds $5$ for more than a quarter of the primes
in our range. Among the $2259$ primes $7\le p<20000$, there
are $900$ irregular primes, and $r_p$ takes the values
$1,2,\ldots,8$ with frequencies $1602$, $283$, $224$, $67$, $59$, $6$,
$16$ and $2$. Thus $e(p)>p+5$ for $657$ primes, a proportion of $0.291$.
If the events $p\mid B_{2i}$ were independent with probability $1/p$,
as in Siegel's heuristic for the number of irregular pairs (see the
comparison in~\cite[Table~2]{HartHarveyOng2017}), the number of
irregular pairs with $3i\le p-1$ would be approximately Poisson with
mean $1/3$, and the expected proportion of primes with $e(p)>p+5$ would
be $1-\exp(-1/3)\approx0.283$.

\subsection*{Prime cofactors for \texorpdfstring{$p=11$}{p=11} and
\texorpdfstring{$p=13$}{p=13}}

The fields $\Q(\zeta_{11})$ and $\Q(\zeta_{13})$ have class number one,
so Theorem~\ref{thm:intro-primes} reduces to a computation of the odd
coordinates of a generator of a prime ideal above $\ell\equiv1\pmod p$. We
computed a generator with Magma for every such prime $\ell$ in the ranges
of Table~\ref{tab:primes}, checked that its norm equals $\ell$, and evaluated its odd
coordinates. Table~\ref{tab:primes} compares the numbers of admitted
primes with the prediction $\pi(x)(p+1)/p^{(p-3)/2}$ of
Theorem~\ref{thm:prime-density}.

\begin{table}[ht]
\centering
\caption{Numbers of admitted primes $\ell\le x$, that is, primes with
$p^{p+4}\ell\in S(C_p\times C_p)$, compared with the prediction
$\pi(x)(p+1)/p^{(p-3)/2}$ of Theorem~\ref{thm:prime-density} (here $h=1$).}\label{tab:primes}
\begin{tabular}{rrrr}
\toprule
$p$ & $x$ & admitted $\ell\le x$ & $\pi(x)(p+1)/p^{(p-3)/2}$\\
\midrule
11 & $10^5$ & 2 & 7.9\\
11 & $10^6$ & 54 & 64.3\\
11 & $10^7$ & 537 & 544.7\\
11 & $3\cdot10^7$ & 1510 & 1522.7\\
13 & $10^7$ & 3 & 25.1\\
13 & $4\cdot10^7$ & 31 & 91.8\\
\bottomrule
\end{tabular}
\end{table}

For $p=11$ the counts approach the predicted values from below, and the
least
admitted prime is $\ell=77419$. A suitable prime ideal $\mathfrak q$
above $\ell$ has defects $(d_3,d_5,d_7,d_9)=(0,0,6,5)$, so the
exceptional data~\eqref{eq:prime-exceptional-packet} apply
with $\beta=6$ and $a=-\beta^{-1}=9$ in $\mathbf F_{11}$. Carrying out
the lift~\eqref{eq:affine-unit-recovery}--\eqref{eq:affine-ray-lift}
exactly, we obtained an integer vector $v$ on $C_{11}\times C_{11}$
whose $121\times121$ group matrix has determinant
\[
 11^{15}\cdot77419=323398376027990284769,
\]
computed exactly from the matrix. Because the construction raises
circular units to large powers, the entries of $v$ are large, although
all are below $10^{36}$ in absolute value.

For $p=13$ the convergence is much slower. Proposition~\ref{prop:prime-size}
shows that every admitted prime exceeds $13^9/2^{12}\approx2.59\cdot10^6$,
and the least admitted prime is $6285241$. The proof of Proposition~\ref{prop:prime-size} shows that
the condition $d_3=d_5=d_7=0$ forces a generator that is congruent to its
complex conjugate modulo $\p^9$, and such generators of small norm are
rare. The corresponding ray classes therefore contain fewer small primes
than the Chebotarev density suggests. Among the $202714$ primes
$\ell\equiv1\pmod{13}$ below $4\cdot10^7$, with the defects computed for
one prime ideal above each $\ell$, each of the $2197$ possible values of
$(d_3,d_5,d_7)$ occurs between $62$ and $122$ times, with mean $92.3$,
except the value $(0,0,0)$, which occurs $31$ times. The density
in Theorem~\ref{thm:prime-density} is an asymptotic statement, and
Table~\ref{tab:primes} shows a slow approach to it.

\subsection*{Scope and open problems}

The whole-set description of Theorem~\ref{thm:main} and the affine
criterion are restricted to regular primes. At an irregular prime two
new features appear. The global units no longer supply the coordinates
$T^{2i}$ with $i\in\IrrE$, so the cofactor ideals impose even
conditions as well as odd ones; with the raw even coefficients as
unknowns, these conditions are no longer affine. Moreover, $p$ divides
the class number, so the defects of nonprincipal ideals cannot be
defined by~\eqref{eq:ideal-defects}. The determination of the critical
cofactors at irregular primes, and of the values $p^Nm$ with
$p+5\le N<e(p)$ and $m$ not norm-free, remains open.

Theorem~\ref{thm:tail} is unconditional because it is stated in terms
of the image of the global units. Without Vandiver's conjecture the set
$\IrrE$ could be smaller than $\Irr$; since $\IrrE\subseteq\Irr$, the
upper bound $e(p)\le p+3+2r_p$ holds in all cases.

For $p=3$ and $p=5$ every multiple of $p^{p+3}$ is a determinant~\cite{PinnerSmyth2020,Panraksa2026Order25}, so the two critical
valuations and the restriction to $p\ge7$ in Sections~\ref{sec:tail}--\ref{sec:prime-cofactors}
reflect a change of behavior at $p=7$. Finally, Mossinghoff and Pinner showed that
$p^{2p^{n-1}}S(C_p^{n-1})\subseteq S(C_p^n)$~\cite[Theorem~2.3]{MossinghoffPinner2026},
so every full tail of $S(C_p\times C_p)$ yields full tails in higher
rank. Combined with Theorem~\ref{thm:intro-tail}(a), this improves the
tails that follow from the exponent $2p+2$ whenever $r_p\le(p-3)/2$, in
particular for every $p<2^{31}$; Mossinghoff and Pinner made this
observation for $p=7$, $11$ and $13$~\cite[after
Theorem~4.2]{MossinghoffPinner2026}. It would be interesting to know whether irregular primes affect
the least full-tail exponent of $S(C_p^n)$ for $n\ge3$ in a similar
way.

\section*{Data availability}

The Python and Magma scripts that
produce the numerical data of Section~\ref{sec:examples}, together with
their outputs, are available as supplementary material. All theorems are
proved in the article, using only the published computations cited
there; our computations serve as independent checks.

\bibliographystyle{plainnat}
\setlength{\bibsep}{3pt}
\bibliography{references}
\end{document}